\newtheorem{teo}{Theorem}[section]
\newtheorem{prop}[teo]{Proposition}
\newtheorem{lem}[teo]{Lemma}
\newtheorem{cor}[teo]{Corollary}
\newtheorem{defini}[teo]{Definition}
\newtheorem{rem}[teo]{Remark}
\newcommand{\der}{{\rm der}}
\newcommand{\ad}{{\rm ad}}
\newcommand{\RR}{{\mathbb R}}
\newcommand{\ZZ}{{\mathbb Z}}
\newcommand{\QQ}{{\mathbb Q}}
\newcommand{\NN}{{\mathbb N}}
\newcommand{\GG}{{\mathbb G}}
\newcommand{\cH}{{\mathcal H}}
\newcommand{\cE}{{\mathcal E}}
\newcommand{\cA}{{\mathcal A}}
\newcommand{\cQ}{{\mathcal Q}}
\newcommand{\cP}{{\mathcal P}}
\newcommand{\gG}{{\bf G}}
\newcommand{\gH}{{\bf H}}
\newcommand{\gN}{{\bf N}}
\newcommand{\gA}{{\bf A}}
\newcommand{\gM}{{\bf M}}
\newcommand{\gSL}{{\bf SL}}
\newcommand{\gP}{{\bf P}}
\newcommand{\gZ}{{\bf Z}}
\newcommand{\gQ}{{\bf Q}}
\newcommand{\gS}{{\bf S}}
\title[The space of homogeneous probability measures on $\overline{\Gamma\backslash X}^S_{\rm max}$]{The space of homogeneous probability measures on $\overline{\Gamma\backslash X}^S_{\rm max}$ is compact}
\author[Daw\and Gorodnik\and Ullmo]{Christopher Daw\and Alexander Gorodnik\and Emmanuel Ullmo}
\address{Daw: Department of Mathematics and Statistics, University of Reading,
    White\-knights,  PO Box 217,  Reading,  Berkshire RG6 6AH,  United Kingdom}
\email{chris.daw@reading.ac.uk}
\address{Gorodnik: Institut f\"ur Mathematik, Universit\"at Z\"urich, Winterthurerstrasse 190, 8057 Z\"urich, Switzerland}
\email{alexander.gorodnik@math.uzh.ch}
\address{Li:
 Institut f\"ur Mathematik, Universit\"at Z\"urich, Winterthurerstrasse 190, 8057 Z\"urich, Switzerland}
\email{jialun.li@math.uzh.ch}
\address{Ullmo:
  IHES, Universit\'e Paris-Saclay, Laboratoire CNRS Alexander Grothendieck, Le Bois-Marie 35, route de Chartres, 91440 Bures-sur-Yvette, France}
\email{ullmo@ihes.fr}
\subjclass[2010]{%
60B10, 
28A33, 
53C35  
}
\keywords{Convergence of measures, locally symmetric space, Satake compactification}
\begin{document}
\maketitle
\begin{center}
\textsc{With an appendix by Jialun Li} 
\end{center}

\begin{abstract} 
In this paper we prove that the space of homogeneous probability measures on the maximal Satake compactification of an arithmetic locally symmetric space is compact. As an application, we explain some consequences for the distribution of weakly special subvarieties of Shimura varieties.
\end{abstract}

\tableofcontents

\section{Introduction}

In this paper, we study the behaviour of sequences of homogeneous measures. More specifically, given a sequence of such measures, we will be interested in describing its limit points. This problem has been studied by Eskin, Mozes, and Shah \cite{MS:ergodic}, \cite{EMS:flows}, \cite{EMS:nondivergence}, 
who showed that, under certain conditions, any limit point is either a homogeneous measure itself or a zero measure. The later case amounts to the existence of a subsequence of measures diverging to infinity and, in \cite{EMS:nondivergence}, a non-divergence condition was established. Such results concerning the convergence of measures have found several remarkable applications in arithmetic geometry (see, for instance, \cite{EMS:flows}, \cite{CU:strong}, \cite{GO11}).
However, the applicability of these tools have so far been limited to the case in which divergence to infinity can be ruled out. 
The goal of the present paper is to investigate limits of divergent sequences by considering them 
inside a Satake compactification. Ultimately, we show that any limit point
is also a homogeneous measure supported on precisely one of the boundary components of the compactification.

We conjectured this result in our previous paper \cite{DGU}, wherein we developed several tools with which to study it and also proved some particular cases, including the locally symmetric space associated with $\gSL_3(\RR)$. We refer to the introduction of \cite{DGU} for some further historical background in homogeneous dynamics. We simply recall here the importance, for our purposes, of the seminal works of Ratner \cite{ratner:measure}, \cite{ratner:flows} on the dynamics of unipotent flows, and of some of its developments by Dani--Margulis \cite{DM:uniflows} and Eskin--Mozes--Shah, as alluded to above. 

\subsection*{Formulating the main result}

Let $\bf{G}$ be a semisimple algebraic group defined over $\QQ$ and let $G$ denote the connected component of $\gG(\RR)$ containing the identity. Let $K$ be a maximal compact subgroup of $G$ and let $\Gamma\subset \gG(\QQ)\cap G$ be an arithmetic lattice. Denote by $X$ the associated Riemannian symmetric space $G/K$, denote by $x_0$ the point in $X$ with stabilizer equal to $K$, and denote by $S$ the associated arithmetic locally symmetric space $\Gamma\backslash X$. Let $\cP(S)$ denote the set of Borel probability measures on $S$.

A $\QQ$--algebraic subgroup ${\bf H}\subset {\bf G}$ is said to be of type $\cH$ if 
the radical $\bf R_H$ of ${\bf H}$ is unipotent and the real Lie groups underlying the $\QQ$--simple factors of ${\bf H}$ are not compact. With an algebraic subgroup ${\bf H}\subset{\bf G}$ of type $\cH$ and some $g\in G$ we can associate a probability measure $\mu_{{\bf H},g}\in\cP(S)$ with support equal to $\Gamma\backslash\Gamma Hgx_0\subset S$.
Such a measure is called homogeneous and we denote by 
 $$
 \cQ(S):=\{\mu_{{\bf H}, g},\ {\bf H} \mbox{ of type $\cH$},\ g\in G\}\subset \cP(S)
 $$
the set of homogeneous probability measures on $S$.

The maximal Satake compactification of $S$
has a decomposition
 \begin{equation}
 \overline{\Gamma\backslash X}_{\rm max}^S= \Gamma\backslash X\coprod \coprod_{{\bf P}\in \cE }\Gamma_{ X_P}\backslash X_P
 \end{equation}
where ${\bf P}$ varies among a (finite) set of representatives $\cE$ of the $\Gamma$--conjugacy classes of proper $\QQ$--parabolic subgroups of ${\bf G}$ and the boundary component $\Gamma_{X_P}\backslash X_P$ is the arithmetic locally symmetric space associated with $\gP$. As a consequence, for any boundary component  $\Gamma_{X_P}\backslash X_P$ of $ \overline{\Gamma\backslash X}_{\rm max}^S$, we can define the set 
$\cQ(\Gamma_{X_P}\backslash X_P)$ of homogeneous probability measures on  $\Gamma_{X_P}\backslash X_P$ as we defined $\cQ(S)$ for the open boundary component $S=\Gamma\backslash X$ of $\overline{\Gamma\backslash X}_{\rm max}^S$.  A probability measure $\mu$ on  
$ \overline{\Gamma\backslash X}_{\rm max}^S$ is said to be homogeneous if $\mu$ is homogeneous on $S$  or on one 
of the proper boundary components $\Gamma_{X_P}\backslash X_{P}$. Then 
$$
\cQ( \overline{\Gamma\backslash X}_{\rm max}^S)=\cQ(S) \coprod \coprod_{{\bf P}\in \cE }\cQ(\Gamma_{X_P}\backslash X_P)\subset \cP( \overline{\Gamma\backslash X}_{\rm max}^S)
$$
is the set of homogeneous probability measures on $\overline{\Gamma\backslash X}_{\rm max}^S$. Our main result is the following, which establishes \cite[Conjecture 1.1]{DGU}.

\begin{teo}\label{tI1}
\
\begin{enumerate}
\item[(i)] The set $\cQ( \overline{\Gamma\backslash X}_{max}^S)$ of homogeneous probability measures on $\overline{\Gamma\backslash X}_{max}^S$ is compact.

\item[(ii)] Let $({\bf H}_n)_{n\in \NN}$ be a sequence of algebraic subgroups of ${\bf G}$ of type $\cH$ and let $(g_n)_{n\in \NN}$ be a sequence of elements of $G$. Let 
$\mu\in \cP(\overline{\Gamma\backslash X}_{\rm max}^S)$ be a weak limit of the associated sequence $(\mu_{{\bf H}_n,g_n})_{n\in \NN}$ of homogeneous measures on $S=\Gamma\backslash X$. Then $\mu$ is a homogeneous measure on $\overline{\Gamma\backslash X}_{\rm max}^S$.
\end{enumerate}
In (ii), if $\mu_{{\bf H}_n,g_n}\rightarrow\mu$ and $\mu$ is supported on the boundary component $\Gamma_{X_P}\backslash X_P$, then there exists a connected algebraic subgroup $\bf H$ of ${\bf P}$ of type $\mathcal{H}$ and an element $g\in P$ such that $\mu=\mu_{\gH,g}$ and ${\bf H}_n$ is contained in $\bf H$ for $n$ large enough.
\end{teo}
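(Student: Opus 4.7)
The first step is to deduce (i) from (ii). Since $\overline{\Gamma\backslash X}^S_{\rm max}$ is compact metrizable, the space $\cP(\overline{\Gamma\backslash X}^S_{\rm max})$ is sequentially compact in the weak-$*$ topology, so it suffices to show $\cQ(\overline{\Gamma\backslash X}^S_{\rm max})$ is sequentially closed. Any sequence in $\cQ(\overline{\Gamma\backslash X}^S_{\rm max})$ has a subsequence whose members all lie in a single stratum, since there are only finitely many. Measures on the open stratum $S$ are handled by (ii) directly; measures on a boundary stratum $\Gamma_{X_P}\backslash X_P$ are handled by induction on the dimension of the ambient algebraic group, noting that the maximal Satake compactification of that stratum embeds naturally as the closure of the stratum inside $\overline{\Gamma\backslash X}^S_{\rm max}$.

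\textbf{Strategy for (ii).} Pass to a subsequence along which $\mu_{\gH_n, g_n} \to \mu$ weakly. If $\mu(S) = 1$, the classical Mozes--Shah theorem for sequences of homogeneous measures coming from groups of type $\cH$ gives $\mu = \mu_{\gH, g}$ with $\gH \subset \gG$ of type $\cH$ and $\gH_n \subset \gH$ for all large $n$, proving (ii) in the tight case. The essential new content lies in the case $\mu(S) < 1$: escape of mass into the finite union of boundary components.

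\textbf{Escape case.} After a further subsequence assume that mass escapes to a single boundary piece indexed by $\gP$. A non-divergence estimate on the Satake compactification, adapting the Dani--Margulis linearization method as developed in \cite{DGU}, should then force $\mu$ to be \emph{entirely} supported on that piece rather than splitting between $S$ and the boundary. Writing $g_n$ in the Langlands decomposition with respect to $\gP$, the escape direction is governed by the $\RR$-split central torus $A_P$ of the Levi: a component $a_n \in A_P$ of $g_n$ diverges in a positive-chamber direction. Conjugating $\gH_n$ by $g_n$ and extracting a Chabauty-convergent subsequence, the type $\cH$ hypothesis combined with the non-divergence estimate should force $\gH_n$ to sit inside a fixed connected $\QQ$-subgroup $\gH \subset \gP$ of type $\cH$ for $n$ large. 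The divergent $A_P$-direction is absorbed into the canonical embedding of the boundary component, producing an element $g \in P$, and applying Mozes--Shah on the smaller locally symmetric space $\Gamma_{X_P}\backslash X_P$ identifies $\mu$ with $\mu_{\gH, g}$.

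\textbf{Main obstacle.} The hardest step is establishing that the Chabauty limit of the $g_n \gH_n g_n^{-1}$ descends to a genuine $\QQ$-algebraic subgroup of $\gP$ of type $\cH$, and that one can simultaneously extract a well-defined $g \in P$ so that $\mu = \mu_{\gH, g}$ on the boundary with $\gH_n \subset \gH$ eventually. This demands careful tracking of how the unipotent radical and non-compact $\QQ$-simple factors of $\gH_n$ interact with $\gP$ under the escape direction, and how the type $\cH$ condition is transported to the Levi quotient governing $\Gamma_{X_P}\backslash X_P$ --- a point where the machinery of \cite{DGU}, together with the appendix of Li, should supply the decisive ingredients.
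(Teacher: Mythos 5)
Your reduction of (i) to (ii) is essentially the paper's Proposition \ref{2implies1}: one passes to a subsequence supported on a single stratum and uses that the closure of a boundary component $\Gamma_{X_Q}\backslash X_Q$ in $\overline{\Gamma\backslash X}^S_{\rm max}$ is homeomorphic to its own maximal Satake compactification, so that (ii) applied to $\gM^\der_\gQ$ covers the boundary strata. That part is sound.

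The treatment of (ii) in the escape case, however, has a genuine gap, and it is located exactly where you wave at ``a non-divergence estimate \ldots should then force'' the conclusion. The convergence criterion you need (\cite{DGU}, Theorem 5.1, restated here as Theorem \ref{criterion2}) requires you to produce a \emph{specific} parabolic $\gP$ with $\gH_n\subset\gH_\gP$ and, writing $g_n=h_na_nk_n\in H_\gP A_\gP K$, to verify that $\alpha(a_n)\to\infty$ for \emph{every} $\alpha\in\Phi(P,A_\gP)$; only then does the limit land entirely on the boundary component of $\gP$ and become homogeneous there. What the failure of the non-divergence criterion (Theorem \ref{criterion1}) actually hands you is much weaker: for \emph{some} simple root $\alpha$ and \emph{some} translates $\lambda_n$ with $\gH_n\subset\gN\lambda_n\gP_\alpha\lambda_n^{-1}$, the single root $\alpha$ diverges. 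Your proposal does not explain how to bootstrap from ``one root diverges for one maximal parabolic'' to ``all roots of $\Phi(P,A_\gP)$ diverge for a single parabolic $\gP$,'' and there is no soft compactness or Chabauty argument that does this: if only a proper subset of the roots diverges you end up testing the wrong (too small) parabolic and the hypotheses of Theorem \ref{criterion2} simply fail. The paper's entire Section \ref{proof} is devoted to this point: one iteratively selects \emph{maximal couples} --- at each stage choosing the root whose \emph{weighted dual weight} $\bar w_\alpha$ evaluated on the $A$-part is largest --- descends through a chain of standard parabolics $\gP_{I_1}\supset\gP_{I_2}\supset\cdots$ until the criterion of Theorem \ref{criterion1} holds on $\gH_{I_{r+1}}$, and then proves (Proposition \ref{rootstoinfty}) that all the discarded roots $\alpha_{i_1},\ldots,\alpha_{i_{r+1}}$ diverge on the accumulated product $a^{(r+1)}_n\cdots a^{(1)}_n$. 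That last step is not formal: it relies on the root-system inequality of the appendix (Theorem \ref{rootconj}), whose hypotheses $\bar w_\alpha(a_n)\ge\bar w_\beta(a_n)$ are supplied precisely by the maximality of the chosen couples. None of this apparatus appears in your sketch.

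Secondly, you misplace the main obstacle. The rationality and type-$\cH$ property of the limit group $\gH$, and the extraction of $g\in P$, are not the hard part here: they are already packaged in the cited results from \cite{DGU} (Theorems \ref{criterion1} and \ref{criterion2}, resting on Ratner, Mozes--Shah and Eskin--Mozes--Shah). The new content of this paper is the combinatorial selection procedure and the inequality between simple roots and dual weights, which your proposal invokes only as an unspecified ``decisive ingredient.'' As written, the argument does not close.
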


Note that, in Theorem \ref{tI1}, (ii) is an immediate consequence of (i), but (ii) (for any $\gG$) implies (i) by simple properties of Satake compactifications (see Proposition \ref{2implies1} (ii)).

\subsection*{Sequences of weakly special subvarieties}
If we assume, moreover, that $X$ is hermitian, then, by a fundamental result of Baily--Borel \cite{BB:compactification}, the hermitian locally symmetric space $S=\Gamma\backslash X$ has the structure of a quasi-projective algebraic variety. Such varieties have been studied extensively by Shimura and Deligne as a generalization of the modular curve. As such they are usually called Shimura varieties and they now play a central role in the theory of automorphic forms (in particular, the Langlands program), the study of Galois representations, and Diophantine geometry. The main examples of Shimura varieties are given by the moduli spaces $\cA_g$ of principally polarized abelian varieties of dimension $g$, in which case ${\bf G}={\rm Sp}_{2g}$ and $\Gamma= {\rm Sp}_{2g}(\ZZ)$. In general, Shimura varieties are moduli spaces of Hodge structures of a restricted type. They are endowed with special points, special subvarieties, and weakly special subvarieties that play a central role in their theory and are the central objects in the Andr\'e--Oort and Zilber--Pink conjectures \cite{pink:combination}, \cite{pink:generalisation}, \cite{zannier:UI}.
 
Special points of $S$ parametrize `maximally symmetric' Hodge structures (more precisely, Hodge structures whose Mumford--Tate groups are tori). In the case of $\cA_g$, they correspond to abelian varieties with complex multiplication. The weakly special subvarieties of $S$ are the totally geodesic subvarieties, and a special subvariety is a weakly special subvariety containing a special point. A special subvariety can also be described in Hodge theoretic terms as a certain locus of `non-generic' Hodge structures. The relevance of these notions for our purposes is due to the fact that any weakly special subvariety of $S$ is the support of a homogeneous measure. 
 
The equidistribution of sequences of homogeneous measures associated with special subvarieties of Shimura varieties (in situations where there is no escape of mass) has been studied by Clozel and the third author \cite{CU:strong}, \cite{ullmo:NF}, and played a central role in the proof of the Andr\'e--Oort conjecture under the Generalized Riemann Hypothesis \cite{ky:andre-oort}, \cite{uy:andre-oort}.  The very successful strategy of Pila--Zannier \cite{pz:manin-mumford}, \cite{pila:Cn}, which has yielded unconditional cases of the Andr\'e--Oort and Zilber-Pink conjectures, has highlighted the importance of understanding the distribution of weakly special varieties. The Ax--Lindemann conjecture \cite{uy:ax},  \cite{pt:ax}, \cite{kuy:ax}, at the heart of their strategy,  asserts that the Zariski closure of an algebraic flow is weakly special. 
 
The main result of this paper has implications on the equidistribution properties of sequences of weakly special subvarieties, even in the situation when there is escape of mass.  In the Shimura case, the Baily--Borel compactification  $\overline{S}^{BB}$ of $S$  has  the form
$$
\overline{S}^{BB}=S\coprod  \coprod_{{\bf P}\in \cE_{\rm max} } \Gamma_{X_{h,P}}\backslash X_{h,{P}}
$$
where $\cE_{\rm max}$ is 
a set of representatives for the $\Gamma$--conjugacy classes of maximal $\QQ$--parabolic
 subgroups of ${\bf G}$ and each boundary component $\Gamma_{X_{h,P}}\backslash X_{h,{P}}$ is hermitian locally symmetric. As before, we say that a measure $\mu$ on $\overline{S}^{BB}$ is homogeneous if $\mu$ is supported on the open boundary component $S$ or on one of the proper boundary components and is homogeneous. In this situation, we have the following theorem (which is a consequence of Theorem \ref{tI1} by \cite{DGU}, Theorem 3.4).
 
 \begin{teo}\label{tI2}
Let $(Z_n)_{n\in \NN}$ be a sequence of weakly special subvarieties of $S$. Let $(\mu_n)_{n\in \NN}$ be the associated sequence of homogeneous measures. Then in the space $\cP(\overline{S}^{BB})$ of probability measures on $\overline{S}^{BB}$ any weak limit $\mu$ of $(\mu_n)_{n\in \NN}$ is homogeneous.
\end{teo}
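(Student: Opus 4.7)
The plan is to reduce Theorem \ref{tI2} to Theorem \ref{tI1} by transporting weak limits from the maximal Satake compactification down to the Baily--Borel compactification along the canonical continuous surjection $\pi\colon \overline{\Gamma\backslash X}_{\rm max}^S \to \overline{S}^{BB}$, which restricts to the identity on the open stratum $S$.

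The first step is to lift the sequence $(\mu_n)$ to the Satake compactification. Since each $Z_n$ is weakly special, the measure $\mu_n$ is homogeneous on $S$, say of the form $\mu_{\mathbf{H}_n,g_n}$, and we may view it as an element of $\cP(\overline{\Gamma\backslash X}_{\rm max}^S)$. Compactness of the latter space (with the weak topology) allows me to extract a subsequence converging weakly to some $\tilde{\mu}$. Part (ii) of Theorem \ref{tI1} then guarantees that $\tilde{\mu}$ is homogeneous on $\overline{\Gamma\backslash X}_{\rm max}^S$, so it is concentrated either on $S$ itself or on a unique Satake boundary component $\Gamma_{X_P}\backslash X_P$, where it is a homogeneous measure.

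Next, continuity of $\pi$ gives $\pi_*\mu_n \to \pi_*\tilde{\mu}$ weakly; since $\pi|_S$ is the identity, $\pi_*\mu_n = \mu_n$ for every $n$. Thus the weak limit $\mu$ of $(\mu_n)$ in $\cP(\overline{S}^{BB})$ equals $\pi_*\tilde{\mu}$, and what remains is to show that this pushforward is homogeneous on $\overline{S}^{BB}$. This is supplied by \cite[Theorem 3.4]{DGU}, which identifies the restriction of $\pi$ to each Satake boundary component $\Gamma_{X_P}\backslash X_P$ with a map onto the Baily--Borel component $\Gamma_{X_{h,P'}}\backslash X_{h,P'}$ indexed by the maximal $\QQ$--parabolic $\mathbf{P}'$ containing $\mathbf{P}$, and shows that pushforward sends homogeneous measures to homogeneous measures.

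The only step requiring genuine work is Theorem \ref{tI1}(ii) itself, which is the main theorem of the paper; conditional on it and on \cite[Theorem 3.4]{DGU}, the deduction of Theorem \ref{tI2} is just the short chain sketched above: compactness on the Satake compactification, continuity of $\pi$, and the fact that $\pi_*$ preserves homogeneity.
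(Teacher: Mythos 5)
Your proof is correct and follows the same route the paper takes: the paper simply declares Theorem \ref{tI2} to be a consequence of Theorem \ref{tI1} together with \cite[Theorem 3.4]{DGU}, which is precisely the chain you flesh out (compactness of $\cP(\overline{\Gamma\backslash X}^S_{\rm max})$, Theorem \ref{tI1}(ii), and pushforward under the continuous surjection $\pi$ onto $\overline{S}^{BB}$). The only cosmetic point is that the subsequence bookkeeping should begin by fixing a subsequence of $(\mu_n)$ that converges to the given weak limit $\mu$ in $\cP(\overline{S}^{BB})$ and only then pass to a further subsequence convergent in $\cP(\overline{\Gamma\backslash X}^S_{\rm max})$, but this is a trivial reordering of steps you already have.
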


One can show that the weak limits in Theorem \ref{tI2} need not be supported on a weakly special subvariety.

\subsection*{Applications and developments}
The main construction of the paper, Theorem \ref{DGUtype} (which, via Theorem \ref{criterion2}, yields Theorem \ref{tI1}), has recently been applied in a new proof (due to Richard and the third author) of the so-called g\'eom\'etrique Andr\'e--Oort conjecture (see \cite{RU}, Th\'eor\`eme 1.3). The proof passes through the following ``dynamic alter ego'' of the aforementioned conjecture.

\begin{teo}[\cite{RU}, Th\'eor\`eme 1.6]\label{RUteo}
Let $V$ be an irreducible algebraic subvariety of $S$ containing a sequence $(Z_n)_{n\in \NN}$ of weakly special subvarieties $Z_n=\Gamma\backslash \Gamma H_nx_n$ for some semisimple $\QQ$--algebraic subgroups $\gH_n$ of $\gG$. There exists a $\QQ$--algebraic subgroup $\gH_\infty$ of $\gG$ such that, after possibly replacing $(Z_n)_{n\in \NN}$ with a subsequence, $\gH_n$ is contained in $\gH_\infty$ and $V$ contains the spaces $\Gamma\backslash \Gamma H_\infty x_n$ for all $n\in\NN$.
 \end{teo}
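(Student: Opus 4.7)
The approach is to deduce Theorem \ref{RUteo} from Theorem \ref{tI1} (via Theorem \ref{tI2}) by associating to each $Z_n$ its canonical homogeneous probability measure and extracting a weak limit inside the Baily--Borel compactification, and then to promote this measure-theoretic information into pointwise algebraic inclusions by exploiting the algebraicity of $V$.

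Write $Z_n=\Gamma\backslash\Gamma H_nx_n$ with $x_n=g_nx_0$ for some $g_n\in G$, and let $\mu_n:=\mu_{\gH_n,g_n}\in\cQ(S)$ be the associated homogeneous probability measure, supported on $Z_n\subset V$. Regarding each $\mu_n$ as an element of $\cP(\overline{S}^{BB})$, which is compact, extract a weakly convergent subsequence $\mu_n\to\mu$. By Theorem \ref{tI2}, $\mu$ is homogeneous on $\overline{S}^{BB}$, supported either on $S$ or on some proper boundary component $\Gamma_{X_{h,P}}\backslash X_{h,P}$. The concluding assertion of Theorem \ref{tI1} (applied at the Satake level and then projected to Baily--Borel) then furnishes a connected $\QQ$--algebraic subgroup $\gH_\infty$ of $\gG$ of type $\cH$, sitting in the relevant parabolic $\gP$ when $\mu$ is boundary-supported, and an element $g$ with $\mu=\mu_{\gH_\infty,g}$, together with the inclusion $\gH_n\subset\gH_\infty$ for all $n$ sufficiently large (after further subsequencing). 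This delivers the group $\gH_\infty$ and the first containment asserted in the theorem.

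The remaining assertion is that $\Gamma\backslash\Gamma H_\infty x_n\subset V$ for every $n$. Let $\pi\colon X\to S$ denote the quotient and write $\tilde V:=\pi^{-1}(V)$; since $V$ is algebraic in the quasi-projective variety $S$, $\tilde V$ is locally algebraic. For each fixed $n$, the set
\[
F_n \;=\; \{\,h\in H_\infty : h\cdot x_n\in\tilde V\,\}
\]
is closed in $H_\infty$ and contains $H_n$, and the claim amounts to showing $F_n=H_\infty$. The plan is to use that the Zariski closure $\overline{V}^{BB}$ of $V$ in $\overline{S}^{BB}$ is projective algebraic (by Baily--Borel and Chow) and already contains the full $\gH_\infty$-orbit supporting $\mu$; combined with the weak convergence $\mu_n\to\mu$ and the $\gH_\infty$-equivariance of $\mu$, this enables an equivariant-rigidity argument that transports the limiting orbit-containment back along the sequence, forcing $H_\infty\cdot x_n\subset\tilde V$ for each~$n$.

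The main obstacle lies precisely in this last step: converting the asymptotic, measure-theoretic content of the weak limit into uniform, pointwise algebraic containments valid for every individual $n$. This is where the algebraic --- rather than merely homogeneous-dynamical --- nature of $V\subset S$, a feature specific to the Shimura / Baily--Borel setting, is decisive.
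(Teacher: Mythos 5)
This theorem is not proved in the present paper: it is quoted from \cite{RU} (Th\'eor\`eme 1.6) as an application of Theorem \ref{DGUtype}, so there is no internal proof to compare against. Judged on its own terms, your proposal correctly carries out the first half: associating the measures $\mu_{\gH_n,g_n}$, extracting a weak limit in the compact space $\cP(\overline{S}^{BB})$, and invoking Theorem \ref{tI2} together with the final clause of Theorem \ref{tI1} to produce $\gH_\infty$ with $\gH_n\subset\gH_\infty$ after passing to a subsequence. (A small point you should still address: the statement only assumes the $\gH_n$ semisimple, so you need the standard reduction showing that the groups underlying weakly special subvarieties can be taken of type $\cH$ before Theorem \ref{tI1} applies.)

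The second half, however, is a genuine gap, and it is precisely where the substance of \cite{RU}, Th\'eor\`eme 1.6 lies. Weak convergence $\mu_n\to\mu$ together with the closedness of $\overline{V}^{BB}$ only yields ${\rm supp}\,\mu\subset\overline{V}^{BB}$, i.e.\ a containment for the single limiting orbit; it says nothing about the individual orbits $\Gamma\backslash\Gamma H_\infty x_n$, each of which is a strictly larger set than $Z_n$ and lives back in $S$. Worse, in the escape-of-mass case the limit orbit sits in a proper boundary component, so the limiting containment concerns the boundary of $\overline{V}^{BB}$ and gives no direct constraint on $H_\infty x_n\subset\pi^{-1}(V)$ at all. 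Your closing paragraph names this obstacle but offers only an unspecified ``equivariant-rigidity argument'' for overcoming it; no mechanism is given for converting the asymptotic statement into the pointwise inclusion $F_n=H_\infty$ for every $n$, and observing that $F_n$ is closed and contains $H_n$ is far from sufficient (a closed subset of $H_\infty$ containing $H_n$ need not be all of $H_\infty$). In \cite{RU} this step is handled by a separate argument exploiting definability in an o-minimal structure and the algebraic/analytic structure of $V$, and without some such input your proof is incomplete at its most essential point.
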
 
 
In fact, the authors obtain the natural generalization of Theorem \ref{RUteo} in the setting for which $S$ is replaced by a general arithmetic quotient and $V$ is a real analytic subvariety definable in an o-minimal structure (see \cite{RU}, Th\'eor\`eme 1.8). This allows the authors, in an appendix with Chen, to complete the work of the latter author \cite{Chen} on the geometric Andr\'e--Oort conjecture for variations of $\ZZ$--Hodge structures (a conjecture due to Klingler \cite{Klingler}).

\bigskip

In a recent preprint, Zhang has announced an extension of Theorem \ref{tI1} in the Borel--Serre compactification, in which the type $\mathcal{H}$ assumption on the $\gH_n$, for $\gH_n=\gH$ fixed, is relaxed (see \cite{Zhang}, Theorem 1.2). 

\subsection*{Organisation of the paper}
In Section \ref{prelim}, we give the necessary preliminaries to clarify the statement of the main result. In particular, we explain why Theorem \ref{tI1} (ii) implies Theorem \ref{tI1} (i). In Section \ref{fprelim}, we give some further definitions and prove some useful results on parabolic subgroups. In Section \ref{tools}, we state the three main tools used in the proof of Theorem \ref{tI1}, namely, two criteria for convergence proved in \cite{DGU}, and an inequality between simple roots and dual weights due to Li. In Section \ref{proof}, we explain an algorithm combining these three tools, which proves Theorem \ref{tI1} and, in the process, determines the boundary component supporting the limit of a convergent sequence of homogeneous measures. The appendix, due to Li, contains a proof of the aforementioned result regarding root systems.

\section*{Acknowledgements}

The authors would like to thank Jialun Li for providing them with a proof of Theorem \ref{rootconj}.

The first and second authors would like to thank the Institut des Hautes Etudes Scientifiques for hosting them several times during the origination and completion of this work. The key breakthrough was made during a particularly fruitful visit in February 2019.

The first author would like to thank the EPSRC for its support via a New Investigator Award (EP/S029613/1). 

The second author was supported by an SNF grant (200021--182089).

\section{Preliminaries}\label{prelim}

In this section, we collect the definitions required in order to explain the main results. We repeat several definitions established in \cite{DGU}.

\subsection{Borel probability measures}\label{bpm}
Let $S$ be a metrizable topological space and let $\Sigma$ be its Borel $\sigma$--algebra. By a {\it Borel probability measure on $S$}, we mean a probability measure on $\Sigma$. We let $\mathcal{P}(S)$ denote the space of all Borel probability measures on $S$. We say that a sequence $(\mu_n)_{n\in\NN}$ in $\mathcal{P}(S)$ {\it converges (weakly)} to $\mu\in\mathcal{P}(S)$ if we have
\begin{align*}
\int_{S}f\ d\mu_n\rightarrow\int_{S}f\ d\mu,\text{ as }n\rightarrow\infty,
\end{align*}
for all bounded continuous functions $f$ on $S$.

\subsection{Algebraic groups}\label{alggrps}
By an {\it algebraic group} $\bf G$, we refer to a linear algebraic group defined over $\QQ$ and by an {\it algebraic subgroup of $\bf G$} we again refer to an algebraic subgroup defined over $\QQ$. We will use boldface letters to denote algebraic groups (which, again, are always defined over $\QQ$). By convention, {\it semisimple} and {\it reductive} algebraic groups are connected.

If $\bf G$ is an algebraic group, we will denote its radical by $\bf R_G$ and its unipotent radical by ${\bf N}_{\bf G}$. We will write ${\bf G}^\circ$ for the (Zariski) connected component of $\bf G$ containing the identity. We will denote the Lie algebra of $\bf G$ by the corresponding mathfrak letter $\mathfrak{g}$, and we will denote the (topological) connected component of ${\bf G}(\RR)$ containing the identity by the corresponding Roman letter $G$. We will retain any subscripts or superscripts in these notations. 

If $\bf M$ and $\bf A$ are algebraic subgroups of $\bf G$, we will write ${\bf Z_M}({\bf A})$ for the centralizer of $\bf A$ in $\bf M$ and ${\bf N_M(A)}$ for the normalizer of $\bf A$ in $\bf M$. We will denote by $G_\QQ$ the intersection ${\bf G}(\QQ)\cap G$ and we will refer to an arithmetic group of $\gG(\QQ)$ contained in $G_\QQ$ as an arithmetic subgroup of $G_\QQ$. 

\subsection{Groups of type $\mathcal{H}$}\label{typeH}
We say that an algebraic group $\bf G$ is {\it of type $\mathcal{H}$} if ${\bf R}_{\bf G}$ is unipotent and the quotient of $\bf G$ by ${\bf R}_{\bf G}$ is an almost direct product of almost ($\QQ$--simple) algebraic groups whose underlying real Lie groups are non-compact. In particular, an algebraic group of type $\mathcal{H}$ has no rational characters. Note that the image under a morphism of algebraic groups of any group of type $\mathcal{H}$ is a group of type $\mathcal{H}$.

\subsection{Homogeneous probability measures on $\Gamma\backslash G$}\label{measures}
Let $\bf G$ denote an algebraic group and let $\Gamma$ denote an arithmetic subgroup of $G_\QQ$. If $\bf H$ is a connected algebraic subgroup of $\bf G$ possessing no rational characters, then there is a unique Haar measure on $H$ whose pushforward $\mu$ to $\Gamma\backslash G$ is a Borel probability measure on $\Gamma\backslash G$. For $g\in G$, we refer to the pushforward of $\mu$ under the right multiplication--by--$g$ map as the {\it homogeneous probability measure on $\Gamma\backslash G$ associated with ${\bf H}$ and $g$}. More explicitly, this is the $g^{-1}Hg$--invariant probability measure supported on $\Gamma Hg$.

\subsection{Parabolic subgroups}\label{parsubgrps}
 A {\it parabolic subgroup} $\bf P$ of a connected algebraic group $\bf G$ is an algebraic subgroup such that the quotient of ${\bf G}$ by $\bf P$ is a projective algebraic variety. In particular, $\bf G$ is a parabolic subgroup of itself. However, by a {\it maximal} parabolic subgroup, we refer to a maximal {\it proper} parabolic subgroup. Note that $\bf R_G$ is contained in every parabolic subgroup of $\bf G$.
 
\subsection{Cartan involutions}\label{cartinv}
Let $\bf G$ be a reductive algebraic group and let $K$ be a maximal compact subgroup of $G$. Then there exists a unique involution $\theta$ on $G$ such that $K$ is the fixed point set of $\theta$. We refer to $\theta$ as the {\it Cartan involution of $G$ associated with $K$}. 
 
\subsection{Boundary symmetric spaces}\label{s2.5}
Let $\bf G$ be a semisimple algebraic group and let $K$ be a maximal compact subgroup of $G$. Let $\bf P$ be a parabolic subgroup of $\bf G$. As in \cite{BJ:compactifications}, (I.1.10), we have the {\it real Langlands decomposition (with respect to $K$)}
\begin{align*}
P=N_PM_PA_P,
\end{align*}
where $L_P:=M_PA_P$ is the unique Levi subgroup of $P$ such that $K_P:=L_P\cap K=M_P\cap K$ is a maximal compact subgroup of $L_P$, and $A_P$ is the maximal split torus in the centre of $L_P$. We denote by $X_P$ the {\it boundary symmetric space} $M_P/K_P$, on which $P$ acts through its projection on to $M_P$.

\subsection{Maximal Satake compactifications}\label{sat}

Let $\gG$ be a semisimple algebraic group and let $K$ be a maximal compact subgroup of $G$. Denote by $X$ the symmetric space $G/K$ and let $\Gamma$ denote an arithmetic subgroup of $G_\QQ$. We let
\begin{align*}
_{\QQ}\overline{X}^S_{\rm max}:=\coprod_{\bf P}X_P,
\end{align*}  
where $\bf P$ varies over the (rational) parabolic subgroups of $\bf G$. We endow $_{\QQ}\overline{X}^S_{\rm max}$ with the topology defined in \cite{BJ:compactifications}, III.11.2. This topology is defined by a convergence class of sequences, from which one obtains a closure operator which in turn induces a topology; see \cite{BJ:compactifications}, p113--114. By \cite{BJ:compactifications}, Proposition III.11.7, the action of $G_\QQ$ on $X$ extends to a continuous action on $_{\QQ}\overline{X}^S_{\rm max}$ and, by \cite{BJ:compactifications}, Theorem III.11.9, the quotient
\begin{align*}
\overline{\Gamma\backslash X}^S_{\rm max}:=\Gamma\backslash_{\QQ}\overline{X}^S_{\rm max},
\end{align*}
endowed with the quotient topology, is a compact Hausdorff space, inside of which $\Gamma\backslash X$ is a dense open subset. We refer to $\overline{\Gamma\backslash X}^S_{\rm max}$ as the {\it maximal Satake compactification} of $\Gamma\backslash X$. 

If $\cE$ is any set of representatives for the proper (rational) parabolic subgroup of $\bf G$ modulo $\Gamma$--conjugation, the maximal Satake compactification $\overline{\Gamma\backslash X}^S_{\rm max}$ is equal to the disjoint union 
\begin{align}\label{satun}
\Gamma\backslash X\coprod \coprod_{{\bf P}\in \cE }\Gamma_{ X_P}\backslash X_P,
\end{align}
where $\Gamma_{X_P}$ is the projection of $\Gamma_P=\Gamma\cap P$ to $M_P$.

\subsection{Homogeneous probability measures on $\overline{\Gamma\backslash X}^S_{\rm max}$}

Consider the situation described in Section \ref{sat} (in particular, $X$ denotes the symmetric space $G/K$, where $K$ is a maximal compact subgroup of $G$). If $\bf H$ is a connected algebraic subgroup of $\bf G$ of type $\mathcal{H}$ and $g\in G$, the homogeneous probability measure on $\Gamma\backslash G$ associated with ${\bf H}$ and $g$ pushes forward to $\overline{\Gamma\backslash X}^S_{\rm max}$ under the natural maps
\begin{align*}
\Gamma\backslash G\rightarrow\Gamma\backslash X\rightarrow\overline{\Gamma\backslash X}^S_{\rm max}.
\end{align*} 
We refer to this probability measure as the {\it homogeneous probability measure on $\overline{\Gamma\backslash X}^S_{\rm max}$ associated with $\bf H$ and $g$}. 

Similarly, if $\bf P$ is a parabolic subgroup of $\bf G$, $\bf H$ is a subgroup of ${\bf P}$ of type $\mathcal{H}$ and $g\in P$, we can define the {\it homogeneous probability measure on $\overline{\Gamma\backslash X}^S_{\rm max}$ associated with $\bf P$, $\bf H$ and $g$} in precisely the same way via the natural maps
\begin{align*}
\Gamma_P\backslash P\rightarrow\Gamma_{X_P}\backslash X_P\rightarrow\overline{\Gamma\backslash X}^S_{\rm max}.
\end{align*}
(Recall that $X_P=M_P/K_P$ and the first map is induced by the projection from $P$ to $M_P$.) We say that a Borel probability measure on $\overline{\Gamma\backslash X}^S_{\rm max}$ is {\it homogeneous} if is a homogeneous probability measure.

\subsection{Properties of Satake compactifications}

Consider the situation described in Section \ref{sat}. We first make two elementary remarks.

\begin{rem}\label{changeK}
Consider another maximal compact subgroup $gKg^{-1}$ of $G$, for some $g\in G$ (recall that all maximal compact subgroups of $G$ are of this form). The maximal Satake compactifications of $\Gamma\backslash X$ corresponding to $K$ and $gKg^{-1}$ are homeomorphic. It follows that Theorem \ref{tI1} is equivalent to the same statement in which $K$ is replaced with $gKg^{-1}$ and the $g_n$ are replaced with $g_ng^{-1}$.
\end{rem}

\begin{rem}\label{changeGamma}
Similarly, for any $c\in G_\QQ$, we obtain a homeomorphism
\begin{align*}
\Gamma\backslash_{\QQ}\overline{X}^S_{\rm max}\rightarrow (c^{-1}\Gamma c)\backslash_{\QQ}\overline{X}^S_{\rm max}
\end{align*}
of compactifications induced by the homeomorphism $x\mapsto cx$ on $_{\QQ}\overline{X}^S_{\rm max}$ (recall that the action is continuous). It follows that Theorem \ref{tI1} is equivalent to the same statement in which we replace $\Gamma$ with $c^{-1}\Gamma c$ and we replace the ${\bf H}_n$ with $c^{-1}{\bf H}_nc$ and the $g_n$ with $c^{-1}g_n$.
\end{rem}

Next we prove a result regarding the structure of the Satake compactifications that is presumably well-known to experts.

\begin{prop}\label{2implies1}
\
\begin{enumerate}
\item[(i)] Let $\Gamma_{X_Q}\backslash X_Q\subset\overline{\Gamma\backslash X}^S_{\rm max}$ be a boundary component as above for some $\gQ\in\cE$. Then the closure of $\Gamma_{X_Q}\backslash X_Q$ in $\overline{\Gamma\backslash X}^S_{\rm max}$ with respect to the above topology is homeomorphic to the maximal Satake compactification of $\Gamma_{X_Q}\backslash X_Q$.
\item[(ii)] Theorem \ref{tI1} (ii) implies Theorem \ref{tI1} (i).
\end{enumerate}
\end{prop}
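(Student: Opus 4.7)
My plan is to handle (i) first and then use it to deduce (ii).

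For (i), I would work inside $_\QQ\overline{X}^S_{\rm max}=\coprod_\gP X_P$, the Borel--Ji compactification indexed by rational parabolics of $\gG$. First I would identify the closure of $X_Q$ in the Borel--Ji convergence topology as $\coprod_{\gP\subseteq\gQ} X_P$; this is a direct unravelling of the defining convergence class of sequences in \cite{BJ:compactifications}, III.11.2. Next I would invoke the standard bijection between rational parabolic subgroups of $\gG$ contained in $\gQ$ and rational parabolic subgroups of the Levi quotient $\gM_Q$, given by projection along $\gQ\to\gM_Q$; this bijection preserves the boundary symmetric spaces, so that $\coprod_{\gP\subseteq\gQ} X_P$ is canonically homeomorphic to the analogous $\QQ$--Satake construction $_\QQ\overline{X_Q}^S_{\rm max}$. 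Finally I would quotient by $\Gamma_Q=\Gamma\cap Q$, using that $\Gamma_Q$ is precisely the stabiliser in $\Gamma$ of the stratum $X_Q$ — a consequence of parabolic subgroups being their own normalisers in $\gG$ — so that the closure of $\Gamma_{X_Q}\backslash X_Q$ in $\overline{\Gamma\backslash X}^S_{\rm max}$ identifies with the $\Gamma_Q$--quotient of $_\QQ\overline{X_Q}^S_{\rm max}$, i.e.\ with the maximal Satake compactification of $\Gamma_{X_Q}\backslash X_Q$.

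For (ii), since $\overline{\Gamma\backslash X}^S_{\rm max}$ is compact metrisable, so is $\cP(\overline{\Gamma\backslash X}^S_{\rm max})$ in the weak topology, and proving that $\cQ(\overline{\Gamma\backslash X}^S_{\rm max})$ is compact reduces to showing that it is sequentially closed. Given $\mu_n\in\cQ(\overline{\Gamma\backslash X}^S_{\rm max})$ converging weakly to some $\mu$, I would use the finiteness of the decomposition~(\ref{satun}) together with the pigeonhole principle to pass to a subsequence all of whose members are supported on a single stratum, either $S$ or some $\Gamma_{X_Q}\backslash X_Q$. If that stratum is $S$, Theorem~\ref{tI1}(ii) applied directly to $\gG$ yields that $\mu$ is homogeneous. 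Otherwise, the support of each $\mu_n$ lies in the closed subset $\overline{\Gamma_{X_Q}\backslash X_Q}\subseteq\overline{\Gamma\backslash X}^S_{\rm max}$, which by (i) is canonically homeomorphic to the maximal Satake compactification of $\Gamma_{X_Q}\backslash X_Q$; hence so is $\mu$. Applying Theorem~\ref{tI1}(ii) in the setting where $\gG$ is replaced by the semisimple part of $\gM_Q$ and $\Gamma$ by $\Gamma_{X_Q}$ then shows that $\mu$ is homogeneous on that smaller compactification, hence on $\overline{\Gamma\backslash X}^S_{\rm max}$.

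The main obstacle, as I see it, is the first step of (i): the careful verification that the Borel--Ji convergence topology on $_\QQ\overline{X}^S_{\rm max}$ restricts to the corresponding topology on $_\QQ\overline{X_Q}^S_{\rm max}$. This requires unpacking the convergence class of sequences for $\gG$ along points tending into $X_Q$ and matching it term-by-term against the analogous class for $\gM_Q$, and it is the only place where one really has to work with the topology rather than just the set-theoretic stratification. A secondary, more routine, point in (ii) is arranging a $\QQ$--structure on $\gM_Q$ for which $\Gamma_{X_Q}$ is arithmetic and verifying that homogeneous measures for $\gG$ supported on the boundary stratum coincide with homogeneous measures for $\gM_Q$ in the sense needed; this boils down to noting that subgroups of $\gQ$ of type $\cH$ project to subgroups of $\gM_Q$ of type $\cH$, as recorded in Section~\ref{typeH}.
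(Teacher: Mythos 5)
Your proposal is correct and follows essentially the same route as the paper: for (i), identify the closure of $X_Q$ in $_{\QQ}\overline{X}^S_{\rm max}$ with $\coprod_{\gP\subset\gQ}X_P$ via the convergence class of \cite{BJ:compactifications}, III.11.2, match the strata through the correspondence between parabolics of $\gG$ contained in $\gQ$ and parabolics of the (derived group of the) Levi, and then quotient by $\Gamma_Q$; for (ii), pass to a subsequence supported on a single stratum and apply Theorem \ref{tI1}~(ii) there, using (i) for the proper boundary components. The only cosmetic difference is that the paper works with the derived subgroup $\gM^\der_\gQ$ (after using Remark \ref{changeK} to arrange that $M_\gQ$ is defined over $\QQ$), a point you correctly flag as the routine $\QQ$--structure issue.
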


\begin{proof}
\
\begin{enumerate}
\item[(i)] By Remark \ref{changeK}, we may assume that $M_\gQ$ corresponds to a group $\gM_\gQ$ defined over $\QQ$. Note that $X_Q$ is equal to $M^\der_\gQ/K^\der_Q$, where $\gM^\der_\gQ$ is the derived subgroup of $\gM_\gQ$ and $K^\der_Q$ is $K\cap M^\der_\gQ$. Therefore, to define the maximal Satake compactification, we consider the disjoint union
\begin{align*}
_\QQ\overline{X_Q}^S_{\rm max}=\coprod_{\gP'\subset\gM^\der_\gQ}X_{P'},
\end{align*} 
where $\gP'$ varies over the (rational) parabolic subgroups of $\gM^\der_\gQ$, $X_{P'}=M_{P'}/K_{P'}$, and $K_{P'}=K^\der_Q\cap M_{P'}$. Note that for each parabolic $\gP'$ of $\gM^\der_\gQ$ we have a corresponding parabolic subgroup $\gP$ of $\gG$ contained in $\gQ$ (see \cite{BJ:compactifications}, p276, (III.1.13)) and $X_P=X_{P'}$. In particular, $_\QQ\overline{X_Q}^S_{\rm max}$ is a subset of $_{\QQ}\overline{X}^S_{\rm max}$ and it follows from \cite{BJ:compactifications}, III.11.2 that it is set-theoretically equal to the closure $\overline{X_Q}$ of $X_Q$ in $_{\QQ}\overline{X}^S_{\rm max}$. Furthermore, from the description of convergent sequences given in \cite{BJ:compactifications}, III.11.2, the Satake topology on $_\QQ\overline{X_Q}^S_{\rm max}$ coincides with the induced topology from $_{\QQ}\overline{X}^S_{\rm max}$.

Replace $\Gamma_{X_Q}$ with its intersection with $M^\der_\gQ$. Then we have
\begin{align*}
\Gamma_{X_Q}\backslash X_Q\subset\overline{\Gamma_{X_Q}\backslash X_Q}^S_{\rm max}=\coprod_{\gP'\subset\gM^\der_\gQ} \Gamma_{X_{Q}}\backslash \Gamma_{X_Q}X_{P'}=\coprod_{\gP\subset\gQ} \Gamma\backslash\Gamma X_{P}\subset \overline{\Gamma\backslash X}^S_{\rm max}.
\end{align*}
It follows from the definition of the Satake topology that $\Gamma_{X_Q}\backslash X_Q$ is dense in $\coprod_{\gP\subset\gQ} \Gamma\backslash\Gamma X_{P}$ and that $\coprod_{\gP\subset\gQ} \Gamma\backslash\Gamma X_{P}$ is closed in $\overline{\Gamma\backslash X}^S_{\rm max}$. This proves the result.

\item[(ii)] Let $(\mu_n)_{n\in\NN}$ be a convergent sequence of homogeneous probability measures in $\mathcal{P}(\overline{\Gamma\backslash X}^S_{\rm max})$ supported on a boundary component $\Gamma_{X_Q}\backslash X_Q$. We want to prove that the limit of $(\mu_n)_{n\in\NN}$ is homogeneous. By definition, there exist sequences $(\gH_n)_{n\in\NN}$ and $(g_n)_{n\in\NN}$ with $\gH_n$ a connected algebraic subgroup of $\gQ$ ot type $\mathcal{H}$ and $g_n\in Q$ such that $\mu_n$ is the homogeneous probability measure on $\overline{\Gamma\backslash X}^S_{\rm max}$ associated with $\gQ$, $\gH_n$, and $g_n$. By Remark \ref{changeK}, we may assume that $M_\gQ$ corresponds to a group $\gM_\gQ$ defined over $\QQ$. Since $X_Q$ is equal to $M^\der_\gQ/K^\der_Q$, as in (i), we may replace $\gH_n$ with its image in $\gM^\der_\gQ$ and $g_n$ with its image in $M^\der_\gQ$ and then the result follows from (i).
\end{enumerate}
\end{proof}

\section{Further preliminaries}\label{fprelim}
 
In this section, we collect preliminaries used in the proof of Theorem \ref{tI1}. Many of these also appeared in \cite{DGU}.

\subsection{Rational Langlands decomposition}\label{ratLang}

Let ${\bf G}$ be a connected algebraic group and let $K$ be a maximal compact subgroup of $G$. Let $\bf P$ be a parabolic subgroup of $\bf G$. As in \cite{BJ:compactifications}, (III.1.3), we have the {\it rational Langlands decomposition (with respect to $K$)}
\begin{align*}
P=N_{\bf P}M_{\bf P}A_{\bf P}.
\end{align*}
Since $G=PK$, the rational Langlands decomposition of $P$ yields
\begin{align*}
G=N_{\bf P}M_{\bf P}A_{\bf P}K.
\end{align*}
In particular, if $g\in G$, we can write $g$ as 
\begin{align*}
g=nmak\in N_{\bf P}M_{\bf P}A_{\bf P}K.
\end{align*}
Note that the product $N_{\bf P}M_{\bf P}$ is always associated with a connected algebraic group over $\QQ$, which we denote $\bf H_P$ (see \cite{DGU}, Section 2.6).

\subsection{Standard parabolic subgroups}\label{standard}

Let $\bf G$ be a reductive algebraic group and let $\bf A$ be a maximal split subtorus of $\bf G$. The non-trivial characters of ${\bf A}$ that intervene in the adjoint representation of ${\bf G}$ restricted to ${\bf A}$ are known as the {\it $\QQ$--roots of ${\bf G}$ with respect to ${\bf A}$}.

Let ${\bf P}_0$ be a minimal parabolic subgroup of $\bf G$ containing $\bf A$. We let $\Phi({\bf P}_0,{\bf A})$ denote the set of characters of $\bf A$ occurring in its action on $\mathfrak{n}$, where ${\bf N}={\bf N}_{{\bf P}_0}$. As explained in \cite{BJ:compactifications}, III.1.7, $\Phi({\bf P}_0,{\bf A})$ contains a unique subset $\Delta=\Delta({\bf P}_0,{\bf A})$ such that every element of $\Phi({\bf P}_0,{\bf A})$ is a linear combination, with non-negative integer coefficients, of elements belonging to $\Delta$. On the other hand, ${\bf P}_0$ is determined by $\bf A$ and $\Delta$. We refer to $\Delta$ as a set of {\it simple $\QQ$--roots of $\bf G$ with respect to $\bf A$}.

For a subset $I\subset\Delta$, we define the subtorus
\begin{align*}
{\bf A}_I=(\cap_{\alpha\in I}\ker\alpha)^{\circ}
\end{align*} 
of $\bf A$. Then the subgroup ${\bf P}_I$ of $\bf G$ generated by ${\bf Z}_{\bf G}({\bf A}_I)$ and $\bf N$ is a parabolic subgroup of $\bf G$. We refer to ${\bf P}_I$ as a {\it standard parabolic subgroup} of $\bf G$. Every parabolic subgroup of $\bf G$ containing ${\bf P}_0$ is equal to ${\bf P}_I$ for some uniquely determined subset $I\subset\Delta$.

Let $K$ be a maximal compact subgroup of $G$ such that $A$ is invariant under the Cartan involution of $G$ associated with $K$. Then, as in \cite{DM:uniflows}, Section 1, ${\bf Z}_{\bf G}({\bf A}_I)$ is the Levi subgroup of $\bf P$ appearing in the rational Langlands decomposition of $P$ with respect to $K$. Note that ${\bf A}_I$ is the maximal split subtorus of the centre of ${\bf Z_G(A}_I)$ and we can write ${\bf Z_G(A}_I)$ as an almost direct product ${\bf M}_I{\bf A}_I$, where ${\bf M}_I$ is a reductive group with no rational characters. The rational Langlands decomposition with respect to $K$ is then
\begin{align*}
P_I=N_IM_IA_I,
\end{align*}
where ${\bf N}_I={\bf N}_{{\bf P}_I}$. We will also write ${\bf H}_I={\bf N}_I{\bf M}_I$. For ease of notation, when $I=\Delta\setminus\{\alpha\}$ for some $\alpha\in\Delta$, we will write ${\bf P}_\alpha$, ${\bf A}_\alpha$, ${\bf N}_\alpha$, ${\bf M}_\alpha$, and ${\bf H}_\alpha$ instead of ${\bf P}_I$, ${\bf A}_I$, ${\bf N}_I$, ${\bf M}_I$, and ${\bf H}_I$, respectively.

The set $I$ restricts to a set of simple roots of $\gM_I$ for its maximal split torus $\gA^I=(\gA\cap\gM_I)^\circ$. Therefore, for any subset $J\subset I$, we obtain, as before, a standard parabolic subgroup of $\gM_I$, which we denote $\gP^I_J$. We let $K_I$ denote the maximal compact subgroup $K\cap M_I$ of $M_I$ and we obtain a rational Langlands decomposition with respect to $K_I$:
\begin{align*}
P^I_J=N^I_JM^I_JA^I_J=H^I_JA^I_J,
\end{align*}
where $\gN^I_J=\gN_{\gP^I_J}$, $\gA^I_J=\gA_J\cap \gA^I$ is the maximal split torus in the center of $\gZ_{\gM_I}(\gA^I_J)$, $\gZ_{\gM_I}(\gA^I_J)$ is the almost direct product of $\gA^I_J$ and $\gM^I_J$, and $\gH^I_J=\gN^I_J\gM^I_J$. We will require several elementary lemmas.

\begin{lem}\label{inc}
Let $J\subset I\subset\Delta$. Then $\gA_I\subset\gA_J$, $\gZ_\gG(\gA_J)\subset\gZ_\gG(\gA_J)$, and $\gP_J\subset\gP_I$
\end{lem}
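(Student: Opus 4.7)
The plan is to verify each of the three inclusions directly from the definitions laid out in Section \ref{standard}, since this is an elementary bookkeeping lemma with no real obstacle beyond tracking definitions (I read the second inclusion as $\gZ_\gG(\gA_J)\subset \gZ_\gG(\gA_I)$, correcting an apparent typo).

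First, I would establish $\gA_I\subset \gA_J$. By definition, $\gA_I=(\cap_{\alpha\in I}\ker\alpha)^\circ$ and $\gA_J=(\cap_{\alpha\in J}\ker\alpha)^\circ$. Since $J\subset I$, intersecting kernels over the larger index set yields a smaller subgroup of $\gA$, that is, $\cap_{\alpha\in I}\ker\alpha\subset \cap_{\alpha\in J}\ker\alpha$. Passing to connected components preserves this inclusion, giving $\gA_I\subset\gA_J$.

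Second, the inclusion $\gZ_\gG(\gA_J)\subset \gZ_\gG(\gA_I)$ is immediate from the previous step: any element of $\gG$ which commutes with every element of $\gA_J$ a fortiori commutes with every element of the subgroup $\gA_I$.

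Third, for $\gP_J\subset \gP_I$, recall that $\gP_I$ is the subgroup of $\gG$ generated by $\gZ_\gG(\gA_I)$ and $\gN$, and similarly $\gP_J$ is generated by $\gZ_\gG(\gA_J)$ and $\gN$. By the second inclusion, both generating sets of $\gP_J$ are already contained in $\gP_I$, hence $\gP_J\subset \gP_I$. As noted, no step here is substantial; the whole lemma amounts to unfolding the definitions in the correct order.
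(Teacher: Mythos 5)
Your proof is correct and follows exactly the same route as the paper's: the kernel-intersection definition gives $\gA_I\subset\gA_J$, reversing to the centralizer inclusion $\gZ_\gG(\gA_J)\subset\gZ_\gG(\gA_I)$ (you are right that the statement contains a typo), and then the generation of $\gP_J$ by $\gZ_\gG(\gA_J)$ and $\gN$ gives the last inclusion. Nothing to add.
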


\begin{proof}
The claim $\gA_I\subset\gA_J$ is immediate from the definition. From this we obtain $\gZ_\gG(\gA_J)\subset\gZ_\gG(\gA_I)$. Then $\gP_J\subset\gP_I$ follows immediately.
\end{proof}

\begin{lem}

Let $J\subset I\subset \Delta$. Then $\gN_J=\gN_I \gN_J^{I}$, $\gM_J=\gM_J^{I}$, and $\gA_J=\gA_J^{I} \gA_I$.

\end{lem}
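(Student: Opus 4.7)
The three identities fit together, with the torus statement $\gA_J=\gA^I_J\gA_I$ as the natural starting point; once it is known, the unipotent and reductive statements will follow from root-system combinatorics and from the uniqueness of the Langlands decomposition, respectively.

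My plan is to begin with the torus identity. The key input is that $\gZ_\gG(\gA_I)=\gM_I\gA_I$ is an almost direct product whose maximal split torus is $\gA$, giving the almost direct decomposition $\gA=\gA^I\gA_I$. Writing $a\in\gA_J$ as $a=a_1a_2$ with $a_1\in\gA^I$ and $a_2\in\gA_I$, any $\alpha\in J\subset I$ is trivial on $a$ and on $a_2$, hence also on $a_1$, placing $a_1$ in $\gA_J\cap\gA^I=\gA^I_J$; the reverse inclusion is immediate from Lemma~\ref{inc}.

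The unipotent identity I would handle at the Lie algebra level. Writing $\mathfrak{n}_J$, $\mathfrak{n}_I$, and $\mathfrak{n}^I_J$ as the standard direct sums of root spaces indexed respectively by $\Phi^+\setminus\langle J\rangle$, $\Phi^+\setminus\langle I\rangle$, and $\langle I\rangle^+\setminus\langle J\rangle$, where $\langle I\rangle$ denotes the subsystem of $\Phi$ generated by $I$, the disjoint partition
$\Phi^+\setminus\langle J\rangle=(\Phi^+\setminus\langle I\rangle)\sqcup(\langle I\rangle^+\setminus\langle J\rangle)$,
valid because $J\subset I$ implies $\langle J\rangle\subset\langle I\rangle$, yields $\mathfrak{n}_J=\mathfrak{n}_I\oplus\mathfrak{n}^I_J$. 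Since $\gN_I$ is normal in $\gP_I\supset\gP_J$ and $\gN^I_J\subset\gM_I\subset\gP_J$, the product $\gN_I\gN^I_J$ is a closed unipotent subgroup contained in $\gN_J$; the dimension count above forces equality.

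Finally, for $\gM_J=\gM^I_J$, I would use the torus identity to reduce the computation of $\gZ_\gG(\gA_J)$ to a centralizer inside $\gZ_\gG(\gA_I)=\gM_I\gA_I$; since $\gA_I$ is central there, one obtains $\gZ_\gG(\gA_J)=\gZ_\gG(\gA^I_J)\cap\gZ_\gG(\gA_I)=\gZ_{\gM_I}(\gA^I_J)\cdot\gA_I=\gM^I_J\gA^I_J\gA_I=\gM^I_J\gA_J$. Because $\gM^I_J$ is connected, reductive, and has no $\QQ$-characters (by its defining property inside $\gM_I$), the uniqueness of the reductive factor in the almost direct decomposition $\gZ_\gG(\gA_J)=\gM_J\gA_J$ forces $\gM_J=\gM^I_J$. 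The delicate point I anticipate is precisely this uniqueness step: being careful that ``almost direct'' pins down $\gM_J$ on the nose (not merely up to a finite subgroup) so that the identification is genuine. Everything else in the argument is essentially root-system bookkeeping once the conventions for $\langle I\rangle$ and for the Langlands decomposition inside $\gM_I$ are fixed.
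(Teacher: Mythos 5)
Your argument is correct in substance, but it takes a genuinely different and much more self-contained route than the paper. The paper's proof is a one-line citation of the relative Langlands decomposition from Borel--Ji (I.1.21): $\gP_J$ is obtained from $\gP_I=\gN_I\gM_I\gA_I$ by replacing $\gM_I$ with its parabolic $\gP^I_J=\gN^I_J\gM^I_J\gA^I_J$, so $\gP_J=\gN_I\gN^I_J\gM^I_J\gA^I_J\gA_I$, and the three identities are read off by matching factors against $\gP_J=\gN_J\gM_J\gA_J$. You instead reprove the content of that citation: the torus identity from the almost direct decomposition $\gA=\gA^I\gA_I$, the unipotent identity from the partition $\Phi^+\setminus\langle J\rangle=(\Phi^+\setminus\langle I\rangle)\sqcup(\langle I\rangle^+\setminus\langle J\rangle)$, and the reductive identity from $\gZ_\gG(\gA_J)=\gZ_{\gM_I}(\gA^I_J)\cdot\gA_I$ together with uniqueness of the character-free factor. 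What your approach buys is transparency (and it correctly isolates the one real subtlety, namely that $\gM_J$ is pinned down on the nose as the intersection of kernels of squares of rational characters of the Levi, which also underlies the paper's implicit factor-matching); what it costs is length and a couple of small slips you should repair: in the unipotent step you write $\gN^I_J\subset\gM_I\subset\gP_J$, but $\gM_I$ is not contained in $\gP_J$ when $J\subsetneq I$ --- what you need (and all you need) is $\gN^I_J\subset\gM_I\subset\gP_I$, so that $\gN^I_J$ normalizes $\gN_I$ and the product is a group, with containment in $\gN_J$ coming from the Lie algebra inclusion; and in the torus step your element $a_1$ lands a priori only in $\cap_{\alpha\in J}\ker\alpha$ rather than its identity component, so you should finish with a dimension count of connected groups exactly as in the paper's Lemma \ref{tori}.
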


\begin{proof}
We recall that (as in \cite{BJ:compactifications}, (I.1.21)) $\gP_J\subset\gP_I$ is obtained from $\gP_I$ by writing $\gP_I=\gN_I\gM_I\gA_I$ and replacing $\gM_I$ by its parabolic subgroup $\gP^I_J=\gN^I_J\gM^I_J\gA^I_J$. That is, $\gP_J=\gN_I\gN^I_J\gM^I_J\gA^I_J\gA_I$, from which the claims follow.
\end{proof}

The following corollaries are now immediate.

\begin{cor}
Let $J\subset I\subset\Delta$. Then $\gM_J\subset\gM_I$ and $\gN_I\subset\gN_J$.
\end{cor}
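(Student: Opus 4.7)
The plan is to deduce both inclusions directly from the preceding lemma, which decomposes $\gN_J$, $\gM_J$, and $\gA_J$ in terms of the corresponding groups inside $\gM_I$. Since everything needed has just been established, there is essentially nothing more to do beyond reading off the consequences.

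First, I would note the inclusion $\gN_I\subset\gN_J$: the preceding lemma gives $\gN_J=\gN_I\gN_J^I$, and since both $\gN_I$ and $\gN_J^I$ are unipotent algebraic subgroups (the second being $\gN_{\gP_J^I}$ inside $\gM_I$), the product contains $\gN_I$ as a factor. Hence $\gN_I\subset\gN_J$ is immediate.

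Next, for $\gM_J\subset\gM_I$, I would use the other identity $\gM_J=\gM_J^I$. By definition in Section \ref{standard}, $\gM_J^I$ is the semisimple (or reductive with no rational characters) part of the Levi of the standard parabolic $\gP_J^I$ of $\gM_I$, and is in particular a subgroup of $\gM_I$. Thus $\gM_J=\gM_J^I\subset\gM_I$.

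There is no real obstacle here; the corollary is a direct unpacking of the decomposition $\gP_J=\gN_I\gN_J^I\gM_J^I\gA_J^I\gA_I$ that is proved in the previous lemma via \cite{BJ:compactifications}, (I.1.21). The only thing to be slightly careful about is ensuring that the identification $\gM_J=\gM_J^I$ is really an equality of algebraic subgroups of $\gG$ (and not merely an isomorphism of abstract groups), but this is precisely what is asserted by the preceding lemma.
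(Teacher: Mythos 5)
Your proof is correct and follows exactly the paper's route: the paper declares the corollary ``immediate'' from the preceding lemma's identities $\gN_J=\gN_I\gN_J^{I}$ and $\gM_J=\gM_J^{I}$, which is precisely the reading-off you carry out. Nothing further is needed.
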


\begin{cor}\label{Hdecomp}
Let $J\subset I\subset\Delta$. Then $\gN_I\gH^I_J=\gN_I\gN^I_J\gM^I_J=\gN_J\gM_J=\gH_J\subset\gH_I$.  (Note that the outer equalities are simply the definitions.) 
\end{cor}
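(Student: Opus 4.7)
The plan is to deduce the corollary directly from the preceding lemma, which decomposes $\gN_J$, $\gM_J$, and $\gA_J$ in terms of their $I$--counterparts. The outer equalities $\gN_I\gH^I_J=\gN_I\gN^I_J\gM^I_J$ and $\gN_J\gM_J=\gH_J$ are, as noted, just the definitions $\gH^I_J=\gN^I_J\gM^I_J$ and $\gH_J=\gN_J\gM_J$, so nothing is to be done there.

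For the middle equality $\gN_I\gN^I_J\gM^I_J=\gN_J\gM_J$, I would simply substitute the two identities provided by the previous lemma, namely $\gN_J=\gN_I\gN^I_J$ and $\gM_J=\gM^I_J$, into the right-hand side; this gives $\gN_J\gM_J=\gN_I\gN^I_J\gM^I_J$ at once.

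Finally, for the inclusion $\gH_J\subset\gH_I$, I would argue as follows. Writing $\gH_J=\gN_I\gN^I_J\gM^I_J$ by what was just shown, it suffices to observe that $\gN^I_J$ and $\gM^I_J$ are both contained in $\gM_I$: by construction $\gP^I_J=\gN^I_J\gM^I_J\gA^I_J$ is a parabolic subgroup of $\gM_I$, so its unipotent radical $\gN^I_J$ and its Levi factor $\gM^I_J$ lie in $\gM_I$. Consequently $\gN^I_J\gM^I_J\subset\gM_I$, and therefore $\gN_I\gN^I_J\gM^I_J\subset\gN_I\gM_I=\gH_I$, as required. I do not anticipate any obstacle: every step is a direct substitution from the preceding lemma and the definitions of the various Langlands factors.
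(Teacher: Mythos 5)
Your proof is correct and matches the paper's (implicit) argument: the paper declares this corollary immediate from the preceding lemma $\gN_J=\gN_I\gN^I_J$, $\gM_J=\gM^I_J$, and your substitution plus the observation $\gN^I_J\gM^I_J\subset\gM_I$ is exactly that deduction spelled out.
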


\begin{lem}\label{tori}
Let $I_3\subset I_2\subset I_1\subset \Delta$. Then
\begin{align*}
\gA^{I_1}_{I_3}=\gA^{I_2}_{I_3}\gA^{I_1}_{I_2}.
\end{align*}
\end{lem}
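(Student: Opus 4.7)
The plan is to deduce the identity from two applications of the preceding lemma (which gives $\gA_J = \gA^I_J\gA_I$ whenever $J\subset I$), combined with the chain $\gA^{I_2}\subset \gA^{I_1}$ inside $\gA$ and Dedekind's modular identity for abelian groups. First, applying the preceding lemma twice, once to the pair $(I,J)=(I_2,I_3)$ and once to the pair $(I,J)=(I_1,I_2)$, I would obtain
\begin{align*}
\gA_{I_3} \;=\; \gA^{I_2}_{I_3}\gA_{I_2} \;=\; \gA^{I_2}_{I_3}\gA^{I_1}_{I_2}\gA_{I_1}.
\end{align*}
Intersecting with $\gA^{I_1}$, the left-hand side becomes $\gA_{I_3}\cap\gA^{I_1}=\gA^{I_1}_{I_3}$ by the very definition of $\gA^{I_1}_{I_3}$.

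Next, on the right-hand side I would verify that $\gA^{I_2}_{I_3}\gA^{I_1}_{I_2}\subset\gA^{I_1}$. The inclusion $\gA^{I_1}_{I_2}\subset\gA^{I_1}$ is immediate; for $\gA^{I_2}_{I_3}\subset\gA^{I_2}\subset\gA^{I_1}$ I would use that $\gM_{I_2}\subset\gM_{I_1}$ (Corollary after Lemma \ref{inc}) together with the definition $\gA^I=(\gA\cap\gM_I)^\circ$. Given this containment, the modular law $(XY)\cap Z = X(Y\cap Z)$ for subgroups $X\subset Z$ of an abelian group yields
\begin{align*}
\gA^{I_1}_{I_3} \;=\; \gA^{I_2}_{I_3}\gA^{I_1}_{I_2}\gA_{I_1}\cap\gA^{I_1} \;=\; \gA^{I_2}_{I_3}\gA^{I_1}_{I_2}\bigl(\gA_{I_1}\cap\gA^{I_1}\bigr).
\end{align*}
Finally, the intersection $\gA_{I_1}\cap\gA^{I_1}$ is finite, since $\gZ_\gG(\gA_{I_1})$ is the almost direct product $\gM_{I_1}\gA_{I_1}$ and $\gA^{I_1}$ is the maximal split torus of $\gM_{I_1}$. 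Thus passing to identity components gives $\gA^{I_1}_{I_3} = \gA^{I_2}_{I_3}\gA^{I_1}_{I_2}$, as desired.

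The main subtlety is the connectedness bookkeeping: the tori $\gA_I$ are defined as identity components and intersections of connected subtori need not be connected, so the equalities in the preceding lemma and in the chain of inclusions must be read up to finite index. I would handle this either by working uniformly with identity components (and absorbing the finite intersection $\gA_{I_1}\cap\gA^{I_1}$ at the end, as above), or -- probably cleaner -- by translating the entire statement into an identity of linear subspaces of $\Lie(\gA)$ cut out by vanishing of the simple $\QQ$--roots in the respective index sets, where everything becomes a straightforward computation with kernels and sums of subspaces. Either implementation amounts to the same short argument; no new input beyond the preceding lemma and the inclusion $\gA^{I_2}\subset\gA^{I_1}$ is needed.
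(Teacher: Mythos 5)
Your argument is correct, and it reaches the identity by a route that is genuinely, if mildly, different from the paper's. The paper first proves the ``dual'' decomposition $\gA^{I_1}=\gA^{I_2}\gA^{I_1}_{I_2}$ directly (both factors lie in $\gA^{I_1}$, their intersection is finite, and the dimensions add up) and then intersects that identity with $\gA_{I_3}$; you instead iterate the preceding lemma to decompose $\gA_{I_3}=\gA^{I_2}_{I_3}\gA^{I_1}_{I_2}\gA_{I_1}$ and then intersect with $\gA^{I_1}$. Both proofs end with the same modular-law-plus-finite-intersection step inside the abelian group $\gA$; yours trades the paper's dimension count for a second application of the lemma $\gA_J=\gA^I_J\gA_I$, which makes the provenance of each factor more transparent but forces you to discharge the finite intersection $\gA_{I_1}\cap\gA^{I_1}$ at the end (correctly handled: a connected group has no proper closed finite-index subgroups, and $\gA^{I_2}_{I_3}\gA^{I_1}_{I_2}$ is connected). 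Your remark that the cleanest implementation is to pass to $\Lie(\gA)$ and compute with kernels of the roots is well taken --- the paper's own proof glosses over exactly the same connectedness bookkeeping --- but as written the group-level version is already complete.
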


\begin{proof}
It is immediate that $\gA^{I_2}$ and $\gA^{I_1}_{I_2}$ are contained in $\gA^{I_1}$. Furthermore, it is easy to show that $\gA^{I_2}\cap\gA^{I_1}_{I_2}$ is finite. Therefore, comparing dimensions, we conclude $\gA^{I_1}=\gA^{I_2}\gA^{I_1}_{I_2}$. Since $\gA^{I_1}_{I_3}=\gA^{I_1}\cap\gA_{I_3}$ and $I_3\subset I_2$, the claim follows.
\end{proof}

\begin{lem}\label{para}
Let $J\subset I\subset\Delta$ and $J'\subset I'\subset\Delta$ such that $I\subset I'$ and $J\subset J'$. Then
\begin{align*}
\gN_I\gP^I_J\subset\gN_{I'}\gP^{I'}_{J'}.
\end{align*}
\end{lem}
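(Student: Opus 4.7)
The plan is to reduce both sides of the claimed inclusion to the compact form $\gH_? \gA^?_?$ using the factorisations already established in the excerpt, and then to compare the group factors and the torus factors separately.

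First I would rewrite $\gN_I\gP^I_J$. By definition $\gP^I_J=\gH^I_J\gA^I_J$, so
\begin{equation*}
\gN_I\gP^I_J=\gN_I\gH^I_J\gA^I_J=\gH_J\gA^I_J,
\end{equation*}
where the last equality is Corollary \ref{Hdecomp}. Applying the same computation with $(I,J)$ replaced by $(I',J')$ gives $\gN_{I'}\gP^{I'}_{J'}=\gH_{J'}\gA^{I'}_{J'}$. So the statement reduces to
\begin{equation*}
\gH_J\gA^I_J\subset \gH_{J'}\gA^{I'}_{J'}.
\end{equation*}

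Next I would deal with the two factors separately. For the $\gH$-factor, since $J\subset J'\subset\Delta$, another application of Corollary \ref{Hdecomp} (taken with the pair $J\subset J'$ in place of $J\subset I$) yields $\gH_J\subset\gH_{J'}$. For the torus factor, I would use Lemma \ref{tori} applied to the chain $J\subset I\subset I'$, giving $\gA^{I'}_J=\gA^I_J\gA^{I'}_I$ and hence $\gA^I_J\subset\gA^{I'}_J$. Then, because $J\subset J'\subset I'$, Lemma \ref{inc} applied inside $\gM_{I'}$ yields $\gP^{I'}_J\subset\gP^{I'}_{J'}$, so
\begin{equation*}
\gA^{I'}_J\subset\gP^{I'}_J\subset\gP^{I'}_{J'}=\gH^{I'}_{J'}\gA^{I'}_{J'}\subset\gH_{J'}\gA^{I'}_{J'},
\end{equation*}
where the final inclusion uses $\gH^{I'}_{J'}\subset\gN_{I'}\gH^{I'}_{J'}=\gH_{J'}$ from Corollary \ref{Hdecomp}.

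Combining the two bounds and using that $\gH_{J'}$ is a group, I obtain
\begin{equation*}
\gH_J\gA^I_J\subset\gH_{J'}\cdot\gH_{J'}\gA^{I'}_{J'}=\gH_{J'}\gA^{I'}_{J'}=\gN_{I'}\gP^{I'}_{J'},
\end{equation*}
which is the desired inclusion. There is no real obstacle here; the only mildly delicate point is keeping straight that enlarging the subset $J$ shrinks the torus $\gA_J$ but enlarges the parabolic $\gP_J$, so the direction of the inclusion for $\gA^{I'}_J$ into $\gH_{J'}\gA^{I'}_{J'}$ must be obtained by absorbing $\gA^{I'}_J$ into the larger parabolic $\gP^{I'}_{J'}$ rather than by a direct torus inclusion.
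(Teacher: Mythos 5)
Your proof is correct and follows essentially the same route as the paper's: both arguments reduce the claim, via Corollary \ref{Hdecomp}, to the inclusion $\gH_J\gA^I_J\subset\gH_{J'}\gA^{I'}_{J'}$ and then handle the group and torus factors separately using $\gH_J\subset\gH_{J'}$ and Lemma \ref{tori}. The only (valid) cosmetic difference is in how the extra torus piece is absorbed: the paper writes $\gA^{I'}_J=\gA^{J'}_J\gA^{I'}_{J'}$ and absorbs $\gA^{J'}_J$ into $\gH_{J'}$ using $\gA^{J'}\subset\gH_{J'}$, whereas you absorb $\gA^{I'}_J$ into $\gP^{I'}_{J'}$ via the parabolic containment $\gP^{I'}_J\subset\gP^{I'}_{J'}$ from Lemma \ref{inc} applied inside $\gM_{I'}$.
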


\begin{proof}
First observe that, by Lemma \ref{tori}, 
\begin{align*}
\gA^{I'}_J=\gA^{J'}_J\gA^{I'}_{J'}=\gA^I_J\gA^{I'}_I.
\end{align*}
Now, we have
\begin{align*}
\gN_{I'}\gP^{I'}_{J'}=\gN_{I'}\gN^{I'}_{J'}\gM^{I'}_{J'}\gA^{I'}_{J'}=\gH_{J'}\gA^{I'}_{J'}=\gH_{J'}\gA^{J'}_J\gA^{I'}_{J'}=\gH_{J'}\gA^{I'}_J,
\end{align*}
where the second equality is Corollary \ref{Hdecomp} and the third equality follows from the fact that $\gA^{J'}\subset\gH_{J'}$. Therefore, since $\gH_J\subset\gH_{J'}$ (by Corollary \ref{Hdecomp}) and $\gA^I_J\subset\gA^{I'}_J$, we obtain 
\begin{align*}
\gN_I\gP^I_J=\gN_I\gN^I_J\gM^I_J\gA^I_J=\gH_J\gA^I_J\subset\gN_{I'}\gP^{I'}_{J'}
\end{align*}
as claimed (using Corollary \ref{Hdecomp} for the second equality).
\end{proof}

\subsection{The $d_{\gP,K}$ functions}\label{dfunctions} 
Let ${\bf G}$ be a reductive algebraic group and let $K$ be a maximal compact subgroup of $G$. Let $\bf P$ be a proper parabolic subgroup of $\bf G$ and let $n_{\bf P}$ denote the dimension of $\mathfrak{n}_{\bf P}$. Consider the $n_{\bf P}^{\rm th}$ exterior product $V_{\bf P}=\wedge^{n_{\bf P}}\mathfrak{g}$ of $\mathfrak{g}$ and let $L_{\bf P}$ denote the one-dimensional subspace given by $\wedge^{n_{\bf P}}\mathfrak{n}_{\bf P}$. Then the adjoint representation induces a linear representation of $\bf G$ on $V_{\bf P}$ and, since $\bf P$ normalizes $\bf N_P$, we obtain a linear representation of $\bf P$ on $L_{\bf P}$. That is, $\bf P$ acts on $L_{\bf P}$ via a character $\chi_{\bf P}$.

Fix a $K$--invariant norm $\|\cdot\|_{\bf P}$ on $V_{\bf P}\otimes_{\QQ}\RR$ and let $v_{\bf P}\in L_{\bf P}\otimes_{\QQ}\RR$ be such that $\|v_{\bf P}\|_{\bf P}=1$. We obtain a function $d_{{\bf P},K}$ on $G$ defined by
\begin{align*}
d_{{\bf P},K}(g)=\|g\cdot v_{\bf p}\|_{\bf P}.
\end{align*}
Note that, for any $g\in G$, we can write $g=kp$, where $k\in K$ and $p\in P$. Therefore,
\begin{align*}
d_{{\bf P},K}(g)=\|g\cdot v_{\bf p}\|_{\bf P}=\|p\cdot v_{\bf p}\|_{\bf P}=\chi_{\bf P}(p)\cdot\|v_{\bf p}\|_{\bf P}=\chi_{\bf P}(p)
\end{align*}
(note that $\chi_{\bf P}$ is necessarily positive on the connected component $P$).
In particular, $d_{{\bf P},K}$ is a {\it function on $G$ of type $(P,\chi_{\bf P})$}, as defined in \cite{borel:arithmetic-groups}, Section 14.1. Furthermore, it does not depend on the choices of $\|\cdot\|_{\bf P}$ and $v_{\bf P}$.

\begin{rem}
Suppose that $\Gamma\subset\gG(\QQ)$ is an arithmetic subgroup. Let $\Gamma_P$ denote $\Gamma\cap P$ and let $\Gamma_{H_\gP}$ denote $\Gamma\cap H_\gP$. We claim that $\Gamma_P=\Gamma_{H_\gP}$. To see this, write $\gP$ as the product of $\gH_\gP$ with a maximal split subtorus $\gS_\gP$ of the centre of a Levi subgroup of $\gP$ (defined over $\QQ$ but not necessarily arising from the rational Langlands decomposition associated with $K$). Let $\Delta(\gS_\gP)$ denote a basis for the character group of $\gS_\gP$ and, for each $\chi\in\Delta(\gS_\gP)$, let $\tilde{\chi}$ denote the morphism $\gP\to\GG_m$ defined by $(h,s)\mapsto\chi(s)^n$ for some sufficiently large $n\in\NN$ such that the morphism is well-defined. By \cite{borel:arithmetic-groups}, Section 8.11, $\tilde{\chi}(\Gamma_P)$ is an arithmetic subgroup of $\QQ^\times$ for any $\chi\in\Delta(\gS_\gP)$. On the other hand, it is contained in $\RR_{>0}$ and, therefore, it is trivial. Hence, $\Gamma_P$ is contained in 
\begin{align*}
\gG_{\Delta(\gS_\gP)}=\cap_{\chi\in\Delta(\gS_\gP)}\ker\tilde{\chi}
\end{align*}
and, since $G_{\Delta(\gS_\gP)}=H_\gP$, the claim follows.
\end{rem}

\subsection{The $\delta$--functions}\label{deltafunctions}

Let $\bf G$ be a connected algebraic group with no rational characters and let $\bf L$ be a Levi subgroup of $\bf G$. Then $\bf G$ is the semidirect product of $\bf L$ and ${\bf N}={\bf N}_{\bf G}$. We denote by $\pi$ the natural (surjective) morphism from ${\bf G}$ to ${\bf L}$.

Let ${\bf P}_0$ be a minimal parabolic subgroup of $\bf L$ and let $\bf A$ be a maximal split subtorus of $\bf L$ contained in ${\bf P}_0$. Let $K$ denote a maximal compact subgroup of $L$ such that $A$ is invariant under the Cartan involution of $G$ associated with $K$. For any proper parabolic subgroup $\bf P$ of $\bf L$, we obtain a function $d_{{\bf P},K}$ on $L$, as defined in Section \ref{dfunctions}, and, for each $\alpha\in\Delta=\Delta({\bf P}_0,{\bf A})$, we write $d_\alpha=d_{{\bf P}_\alpha,K}$.

Let $\Gamma$ be an arithmetic subgroup of $G_\QQ$ and let $\Gamma_L=\pi(\Gamma)$, which is arithmetic subgroup of $L_\QQ$. By \cite{borel:arithmetic-groups}, Th\'eor\`eme 13.1, there exists a finite subset $F$ of $L_\QQ$ and a $t>0$ such that $L=KA_t\omega F^{-1}\Gamma_L$,
where $\omega$ is a compact subset of $H_{{\bf P}_0}$ and 
\begin{align*}
A_t=\{a\in A:\alpha(a)\leq t\text{ for all }\alpha\in\Delta\}.
\end{align*}
As in \cite{DGU}, we refer to a set $F$ as above as a $\Gamma_L$--set for $\bf L$. 

For any connected algebraic subgroup $\bf H$ of $\bf G$ and any $g\in G$, we define
\begin{align*}
\delta(\gG,K,\Delta,F)({\bf H},g)={\rm inf}\{d_\alpha(\pi(g)^{-1}\lambda):\lambda\in\Gamma_LF,\ \alpha\in\Delta,\ {\bf H}\subset{\bf N}\lambda{\bf P}_{\alpha}\lambda^{-1}\}
\end{align*}
(where we take the value to be $\infty$ if the infimum is varying over the empty set).
By \cite{DGU}, Lemma 4.3, we have $\delta(\gG,K,\Delta,F)({\bf H},g)>0$. 

\section{Main tools}\label{tools}

In this section, we describe the three main tools used in the proof of Theorem \ref{tI1}.

\subsection{The criterion for convergence in $\Gamma\backslash G$}

Consider the situation described in Section \ref{deltafunctions}. Let $\delta=\delta(\gG,K,\Delta,F)$. The following result is the combination of \cite{DGU}, Theorem 4.6 and \cite{DGU}, Theorem 2.9. However, it should be emphasized that it is a very modest generalization of a result of Eskin--Mozes--Shah \cite{EMS:flows}, making similar use of the same tools, namely, those of Dani--Margulis, Eskin--Mozes--Shah, Mozes--Shah, and Ratner (see \cite{DM:uniflows}, \cite{EMS:flows}, \cite{EMS:nondivergence}, \cite{MS:ergodic}, and \cite{ratner:measure}).

\begin{teo}\label{criterion1}
For each $n\in\NN$, let ${\bf H}_n$ be a connected algebraic subgroup of $\bf G$ of type $\mathcal{H}$, let $g_n\in G$ and let $\mu_n$ be the homogeneous probability measure on $\Gamma\backslash G$ associated with ${\bf H}_n$ and $g_n$. Assume that
\begin{align*}
\liminf_{n\rightarrow\infty}\ \delta({\bf H}_n,g_n)>0.
\end{align*}
Then the set $\{\mu_n\}_{n\in\NN}$ is sequentially compact in $\mathcal{P}(\Gamma\backslash G)$. 

Furthermore, if $\mu$ is a limit point in $\{\mu_n\}_{n\in\NN}$, then $\mu$ is the homogeneous probability measure on $\Gamma\backslash G$ associated with a connected algebraic subgroup ${\bf H}$ of $\bf G$ of type $\mathcal{H}$ and an element $g\in G$, and ${\bf H}_n$ is contained in $\bf H$ for all $n$ large enough. 
\end{teo}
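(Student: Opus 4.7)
The proof plan splits into two stages corresponding to the two halves of the theorem, and the strategy is to combine two ingredients already distilled from the literature: a non-divergence criterion formulated in terms of the $\delta$-function of Section \ref{deltafunctions}, and the Ratner/Mozes--Shah measure rigidity.

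First, I would show that the family $\{\mu_n\}_{n\in\NN}$ is tight, ensuring that no mass escapes to infinity along any subsequential weak limit. The $\delta$-function was engineered precisely to witness impending escape of mass: if $\delta(\mathbf{H}_n,g_n)$ were small, some $\Gamma_L$-translate $\lambda$ and some simple root $\alpha$ with $\mathbf{H}_n \subset \mathbf{N}\lambda\mathbf{P}_\alpha\lambda^{-1}$ would place $\pi(g_n)^{-1}\lambda$ deep in the $\mathbf{P}_\alpha$-cuspidal direction, so that the orbit $\mathbf{H}_n g_n$, which is unipotently generated because $\mathbf{H}_n$ is of type $\mathcal{H}$, would drift into a cusp. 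The uniform lower bound $\liminf \delta(\mathbf{H}_n,g_n) > 0$ rules out this scenario simultaneously in $n$. Concretely, this is exactly the hypothesis fed into \cite[Theorem 4.6]{DGU}, itself an adaptation of the Eskin--Mozes--Shah non-divergence theorem \cite{EMS:nondivergence}: for every $\epsilon > 0$, one extracts a compact set $K_\epsilon \subset \Gamma\backslash G$ with $\mu_n(K_\epsilon) \geq 1 - \epsilon$ for all $n$, and Prokhorov's theorem yields sequential compactness in $\mathcal{P}(\Gamma\backslash G)$.

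For the second stage, fix a weak limit $\mu$ of some subsequence $(\mu_{n_k})_k$; the tightness from Stage 1 guarantees $\mu \in \mathcal{P}(\Gamma\backslash G)$. Since each $\mathbf{H}_n$ is of type $\mathcal{H}$, the real Lie group $g_n^{-1}H_n g_n$ is generated by its $\mathrm{Ad}$-unipotent one-parameter subgroups, so each $\mu_n$ is unipotent-invariant and falls within the scope of Ratner's theorems \cite{ratner:measure}. The Mozes--Shah theorem \cite{MS:ergodic}, as repackaged in the type-$\mathcal{H}$ framework in \cite[Theorem 2.9]{DGU}, then asserts that $\mu$ is itself homogeneous, associated with a connected algebraic subgroup $\mathbf{H} \subset \mathbf{G}$ of type $\mathcal{H}$ and some $g \in G$, and moreover that $\mathbf{H}_{n_k} \subset \mathbf{H}$ for $k$ sufficiently large. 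A routine subsequence-of-subsequence argument then upgrades this containment from a subsequence to all sufficiently large $n$.

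The principal technical obstacle is Stage 1: translating the quantitative lower bound on $\delta$ into tightness of the measures. This is the content of \cite[Theorem 4.6]{DGU}, and its proof requires interpreting each $d_\alpha$ as the norm of a highest weight vector in a suitable $\mathbf{G}$-representation, invoking the Dani--Margulis linearization technique to detect when $g_n$-translates of certain linear subvarieties approach the origin, and converting this into a uniform non-divergence estimate on the $\mu_n$. Stage 2, by contrast, is a direct application of Mozes--Shah rigidity once tightness is in hand, and the containment $\mathbf{H}_n \subset \mathbf{H}$ is built into the conclusion of that theorem.
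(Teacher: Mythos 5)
Your proposal is correct and matches the paper's treatment exactly: the paper does not reprove this theorem but states it as the combination of \cite{DGU}, Theorem 4.6 (non-divergence/tightness from the lower bound on $\delta$, adapted from Eskin--Mozes--Shah via Dani--Margulis linearization) with \cite{DGU}, Theorem 2.9 (homogeneity of the limit and the eventual containment ${\bf H}_n\subset{\bf H}$, via Ratner and Mozes--Shah). These are precisely the two ingredients and roles you identify.
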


\subsection{The criterion for convergence in $\overline{\Gamma\backslash X}^S_{\rm max}$}

Consider the situation described in Section \ref{sat}. For a subgroup $H$ of $G$, we set $\Gamma_H=\Gamma\cap H$. The following result is \cite{DGU}, Theorem 5.1.

\begin{teo}\label{criterion2}
For each $n\in\NN$, let ${\bf H}_n$ denote a connected algebraic subgroup of $\bf G$ of type $\mathcal{H}$, let $g_n$ denote an element of $G$ and let $\mu_n$ denote the homogeneous probability measure on $\overline{\Gamma\backslash X}^S_{\rm max}$ associated with ${\bf H}_n$ and $g_n$.

Suppose that there exists a parabolic subgroup ${\bf P}$ of $\bf G$ such that,
\begin{enumerate}
\item[(i)] for all $n\in\NN$, ${\bf H}_n$ is contained in $\bf H_P$,
\item[(ii)] we can write
\begin{align*}
g_n=h_na_nk_n\in H_{\bf P}A_{\bf P}K,
\end{align*}	
such that
\begin{align*}
\alpha(a_n)\rightarrow\infty,\text{ as }n\rightarrow\infty,\text{ for all }\alpha\in\Phi(P,A_{\bf P}),
\end{align*}
and, 
\item[(iii)] if we denote by $\nu_n$ the homogeneous probability measure on $\Gamma_{H_{\bf P}}\backslash H_{\bf P}$ associated with ${\bf H}_n$ and $h_n$, then $(\nu_n)_{n\in\NN}$ converges to $\nu\in\mathcal{P}(\Gamma_{H_{\bf P}}\backslash H_{\bf P})$.
\end{enumerate}

Then there exists a connected algebraic subgroup $\bf H$ of ${\bf P}$ of type $\mathcal{H}$ and an element $g\in P$ such that $(\mu_n)_{n\in\NN}$ converges to the homogeneous probability measure on $\overline{\Gamma\backslash X}^S_{\rm max}$ associated with $\bf P$, $\bf H$ and $g$, and, furthermore, ${\bf H}_n$ is contained in ${\bf H}$ for $n$ large enough.
\end{teo}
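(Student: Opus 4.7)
The plan is to combine an application of Theorem~\ref{criterion1} on the ``fibre'' group $\mathbf{H_P}$ (to identify the weak limit $\nu$ of $(\nu_n)$) with a description of how orbits of the form $h'a_nK$, with $a_n$ escaping to infinity in the positive Weyl chamber associated with $\mathbf{P}$, approach the boundary component $\Gamma_{X_P}\backslash X_P$ in the maximal Satake topology.

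First, $\mathbf{H_P}$ is a connected $\QQ$-group with no rational characters, and the subgroups $\mathbf{H}_n\subset\mathbf{H_P}$ are of type~$\cH$. Since $(\nu_n)$ converges weakly to a genuine probability measure $\nu$, no mass escapes, so Theorem~\ref{criterion1} applied inside $\Gamma_{H_\mathbf{P}}\backslash H_\mathbf{P}$ (equivalently, the Mozes-Shah structure theorem for limits of homogeneous measures) yields $\nu=\nu_{\mathbf{H},h}$ for some connected $\QQ$-subgroup $\mathbf{H}\subset\mathbf{H_P}$ of type~$\cH$ and some $h\in H_\mathbf{P}$, with $\mathbf{H}_n\subset\mathbf{H}$ for all $n$ sufficiently large. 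For the second ingredient, given $h'\in H_\mathbf{P}$, write $h'=n'm'$ with $n'\in N_\mathbf{P}$, $m'\in M_\mathbf{P}$; since $A_\mathbf{P}$ and $M_\mathbf{P}$ commute,
\begin{align*}
h'a_nK = n'a_nm'K.
\end{align*}
By hypothesis~(ii), $a_n$ escapes into the positive Weyl chamber cut out by $\mathbf{P}$, and the description of the convergence class of sequences defining the Satake topology (\cite{BJ:compactifications}, III.11.2) then guarantees that $\Gamma\cdot n'a_nm'K$ converges in $\overline{\Gamma\backslash X}^S_{\rm max}$ to the image of $m'$ in $\Gamma_{X_P}\backslash X_P$, uniformly as $h'$ ranges over a compact subset of $H_\mathbf{P}$.

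To transfer this pointwise convergence to the measures, observe that by $K$-invariance of $x_0$, the decomposition $g_n=h_na_nk_n$, and the definition of $\nu_n$,
\begin{align*}
\int f\,d\mu_n = \int_{\Gamma_{H_\mathbf{P}}\backslash H_\mathbf{P}} f\bigl(\Gamma\cdot h'a_nK\bigr)\,d\nu_n(h')
\end{align*}
for any bounded continuous $f$ on $\overline{\Gamma\backslash X}^S_{\rm max}$. Tightness of $(\nu_n)$, boundedness of $f$, and the uniform convergence just established allow one to pass to the limit and conclude that $\mu_n\to q_*\nu$ weakly, where $q\colon\Gamma_{H_\mathbf{P}}\backslash H_\mathbf{P}\to\Gamma_{X_P}\backslash X_P$ is induced by the projection $H_\mathbf{P}=N_\mathbf{P}M_\mathbf{P}\to M_\mathbf{P}\to X_P$. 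The image of $\mathbf{H}$ in $\mathbf{M_P}$ is again of type~$\cH$, so $q_*\nu$ is precisely the homogeneous probability measure on $\overline{\Gamma\backslash X}^S_{\rm max}$ associated with $\mathbf{P}$, $\mathbf{H}$, and $h$; the containment $\mathbf{H}_n\subset\mathbf{H}$ for large $n$ is inherited from the first step. The hardest part is the uniform-in-$h'$ convergence used in step two: the maximal Satake topology is defined via a convergence class of sequences rather than a metric, so this uniformity must be extracted directly from the Borel--Ji convergence criteria together with the contracting action of $a_n$ on $N_\mathbf{P}$, and the tightness argument required to transport this uniform convergence across the non-compact support of $\nu$ adds a further technical layer that must be handled with some care.
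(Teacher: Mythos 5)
Your outline is correct and follows essentially the same route as the proof the paper relies on: Theorem \ref{criterion2} is quoted from \cite{DGU} (Theorem 5.1), and that proof likewise upgrades hypothesis (iii) to homogeneity of $\nu$ via Mozes--Shah/Ratner (as the remark after the theorem indicates), disintegrates $\mu_n$ over $\nu_n$ using $g_n=h_na_nk_n$, and passes to the limit via a uniform-on-compacta convergence of $\Gamma h'a_nx_0$ to the boundary component extracted from the Borel--Ji convergence criteria, together with tightness of $(\nu_n)$. You have correctly identified the two genuinely delicate points (the uniformity in $h'$ and the transport across the non-compact support), which are exactly where the cited proof expends its effort.
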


Note that, by the results of Mozes--Shah \cite{MS:ergodic} and Ratner \cite{ratner:measure} condition (iii) is equivalent to the stronger statement that $\nu$ not only exists but is also homogeneous (see \cite{DGU}, Theorem 2.9). This is used crucially in the proof of Theorem \ref{criterion2} (see \cite{DGU}).

\subsection{A key result on root systems}\label{keyresult}

Let $\gG$ be reductive algebraic group, let $\gP_0$ be a minimal parabolic subgroup of $\gG$, and let $\gA$ be a maximal split subtorus of $\gG$ contained in $\gP_0$. Let $X^*({\bf A})$ denote the character module of $\bf A$ and let $X^*({\bf A})_{\QQ}$ denote the $\QQ$--vector space $X^*({\bf A})\otimes_{\ZZ}\QQ$. Fix a non-degenerate scalar product $(\cdot,\cdot)$ on $X^*({\bf A})_{\QQ}$ that is invariant under the action of ${\bf N}_{\bf G}({\bf A})(\QQ)$. Then the $\QQ$--roots of ${\bf G}$ with respect to ${\bf A}$  equipped with the inner product $(\cdot,\cdot)$ constitute a {\it root system} in $X^*({\bf A})_{\QQ}$. We refer the reader to \cite{springer:redgroups}, Section 3.5 for further details.

Let $\Delta=\Delta(\gP_0,\gA)$ and, for each $\alpha\in\Delta$, let $w_\alpha\in X^*(\gA)_\QQ$ be the unique element such that $(w_\alpha,\beta)=\delta_{\alpha\beta}$ for any $\beta\in\Delta$. These elements are usually called {\it dual weights}. They are a particular choice of quasi-fundamental weights, as defined in \cite{DGU}, Section 2.9. It follows from \cite{DGU}, Lemma 4.1 that $w_\alpha$ is a positive rational multiple of the character $\chi_{\alpha}$ defined therein (namely, the restriction of $\chi_{{\bf P}_\alpha}$ to $\bf A$). We therefore deduce the following fact.

\begin{lem}\label{trivial}
Let $\alpha\in\Delta$ and $I=\Delta\setminus\{\alpha\}$. Then $w_\alpha$ is trivial on $\gA^I$.
\end{lem}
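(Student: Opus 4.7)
The plan is to deduce Lemma \ref{trivial} from the identification of $w_\alpha$ with a scalar multiple of $\chi_{\gP_\alpha}|_\gA$ provided by \cite{DGU}, Lemma 4.1 (already invoked in the paragraph preceding the statement), combined with the fact, recorded in Section \ref{standard}, that the group $\gM_\alpha$ has no rational characters.

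First I would apply \cite{DGU}, Lemma 4.1 to write $w_\alpha = c_\alpha\,\chi_{\gP_\alpha}|_\gA$ in $X^*(\gA)_\QQ$ for some $c_\alpha \in \QQ_{>0}$, where $\chi_{\gP_\alpha}$ is the rational character of the algebraic group $\gP_\alpha$ introduced in Section \ref{dfunctions} (the one by which $\gP_\alpha$ acts on the one-dimensional subspace $L_{\gP_\alpha} = \wedge^{n_{\gP_\alpha}}\mathfrak{n}_{\gP_\alpha}$). Since $c_\alpha \neq 0$, proving the lemma reduces to showing that $\chi_{\gP_\alpha}$ restricts trivially to $\gA^I$.

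Next I would use the fact that, because $I = \Delta \setminus \{\alpha\}$, we have $\gP_I = \gP_\alpha$, $\gM_I = \gM_\alpha$, and $\gA^I = (\gA \cap \gM_\alpha)^\circ$. In particular, the chain of inclusions $\gA^I \subset \gM_\alpha \subset \gP_\alpha$ reduces the task further to showing that $\chi_{\gP_\alpha}|_{\gM_\alpha}$ is trivial as a rational character of the algebraic group $\gM_\alpha$. This is immediate from the fact, recorded in Section \ref{standard}, that $\gM_\alpha$ is reductive and has no rational characters (being the complementary factor in the almost direct product decomposition $\gZ_\gG(\gA_\alpha) = \gM_\alpha\gA_\alpha$).

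Since the argument is just a chain of inclusions combined with two facts already on the table, I do not anticipate any real obstacle; the substantive content of the lemma is entirely absorbed into \cite{DGU}, Lemma 4.1.
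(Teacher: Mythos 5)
Your proof is correct and follows essentially the same route as the paper's: reduce via \cite{DGU}, Lemma 4.1 to showing that the character $\chi_{\gP_\alpha}$ vanishes on $\gA^I$, then exploit the absence of rational characters on a subgroup of $\gP_\alpha$ containing $\gA^I$. The paper routes through $\gH_I = \gN_I\gM_I$ where you use the reductive part $\gM_\alpha$ directly; this is an immaterial variant since both groups have no rational characters and contain $\gA^I$.
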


\begin{proof}
Since $\gH_I$ has no rational characters and $\gA^I$ is contained in $\gH_I$, the result follows from the fact that $\chi_\alpha$ is the restriction of a character on $\gP_I$.
\end{proof}

Notice that $\Delta$ and $\{w_\alpha\}_{\alpha\in\Delta}$ constitute two bases of $X^*({\bf A})_{\QQ}$. By construction, we have 
\begin{align*}
w_\alpha=\sum_{\beta\in\Delta}(w_\alpha,w_\beta)\beta\hspace{1cm}\text{and}\hspace{1cm}
\alpha=\sum_{\beta\in\Delta}(\alpha,\beta)w_\beta,
\end{align*}
and it is a simple calculation to see that the matrices with coefficients $(w_\alpha,w_\beta)$ and $(\alpha,\beta)$, respectively, are inverse to one another. It follows from \cite{LGLA}, Chapter 3, Proposition 1.16 that $(w_\alpha,w_\beta)\geq 0$ for all $\alpha,\beta\in\Delta$ (see also the paragraph following Lemma 6.5 in the appendix).
We define $d_\alpha=\sum_{\beta\in\Delta}(w_\alpha,w_\beta)$. In particular, $d_\alpha>0$. 

\begin{defini}\label{wfw}
For each $\alpha\in\Delta$, we write $\bar{w}_\alpha=\frac{1}{d_\alpha}w_\alpha$. We refer to the set $\{\bar{w}_\alpha\}_{\alpha\in\Delta}$ as a set of {\it weighted} dual weights of $\gG$ with respect to $\gA$. From the above and \cite{DGU}, Section 4.1 (3), we see that $\bar{w}_\alpha$ is a positive rational combination of positive roots.
\end{defini}

The key result we need is the following. A proof (due to Jialun Li) is provided in the appendix (see Corollary \ref{cor:roots}).

\begin{teo}[Li]\label{rootconj}
Let $\alpha\in\Delta$ and let $I\subset\Delta\setminus\{\alpha\}$ be such that $\alpha$ is connected to $\Delta\setminus(I\cup\{\alpha\})$ in the Dynkin diagram. For each $n\in\NN$, let $a_n\in A_I$ and suppose that
\begin{align*}
\beta(a_n)\rightarrow\infty,\text{ as }n\rightarrow\infty
\end{align*}
for all $\beta\in\Delta\setminus(I\cup\{\alpha\})$. Furthermore, suppose that 
\begin{align*}
\bar{w}_{\alpha}(a_n)\geq\bar{w}_\beta(a_n)\text{ for all }\beta\in\Delta\setminus I\hspace{1cm}\text{and}\hspace{1cm}\bar{w}_{\alpha}(a_n)\geq 1.
\end{align*}
Then
\begin{align*}
\alpha(a_n)\rightarrow\infty,\text{ as }n\rightarrow\infty.
\end{align*}
\end{teo}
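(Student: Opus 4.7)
The plan is to argue by contradiction. Suppose $\alpha(a_n) \not\to \infty$. By passing to a subsequence, we may assume $b_{n, \alpha} := \log \alpha(a_n) \le C$ for some constant $C$. Let $b_{n, \gamma} := \log \gamma(a_n)$ for every $\gamma \in \Delta$. Since $a_n \in A_I$, we have $b_{n, \gamma} = 0$ for $\gamma \in I$, while $b_{n, \gamma} \to \infty$ for $\gamma \in \Delta \setminus (I \cup \{\alpha\})$ and $b_{n, \alpha}$ is bounded above. As a first reduction, one may assume the root system $\Delta$ is irreducible: otherwise the inner products $(w_\gamma, w_\delta)$ vanish between different components, so the hypotheses (and the quantities $\bar w_\delta$) decouple across components, and the connectedness hypothesis guarantees that the irreducible component containing $\alpha$ also meets $\Delta \setminus (I \cup \{\alpha\})$.

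Next, I exploit the identity
\begin{align*}
\bar w_\delta = \sum_{\gamma \in \Delta} \lambda^\delta_\gamma\, \gamma, \qquad \lambda^\delta_\gamma := \frac{(w_\delta, w_\gamma)}{d_\delta} \ge 0, \qquad \sum_{\gamma \in \Delta} \lambda^\delta_\gamma = 1,
\end{align*}
which presents $\bar w_\delta$ as a convex combination of simple roots. Taking logarithms of each hypothesis $\bar w_\alpha(a_n) \ge \bar w_\beta(a_n)$ (for $\beta \in \Delta \setminus I$) and using $b_{n,\gamma} = 0$ for $\gamma \in I$ yields the linear inequality
\begin{align*}
\sum_{\gamma \in \Delta \setminus I} (\lambda^\alpha_\gamma - \lambda^\beta_\gamma)\, b_{n, \gamma} \ge 0,
\end{align*}
while $\bar w_\alpha(a_n) \ge 1$ gives $\sum_{\gamma \in \Delta \setminus I} \lambda^\alpha_\gamma\, b_{n, \gamma} \ge 0$. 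The goal is to take a suitable non-negative linear combination of these inequalities to produce a bound of the form
\begin{align*}
b_{n, \alpha} \ge \sum_{\gamma \in \Delta \setminus (I \cup \{\alpha\})} \rho_\gamma\, b_{n, \gamma}
\end{align*}
with $\rho_\gamma \ge 0$ and at least one $\rho_{\gamma_0} > 0$. Such a bound immediately forces $b_{n, \alpha} \to \infty$, the desired contradiction.

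The main obstacle is showing that such a non-negative combination exists in every irreducible root system under the connectedness assumption on $\alpha$. Although in the irreducible case all $\lambda^\delta_\gamma > 0$ (a classical positivity result for the inverse Cartan matrix), the interplay with the normalizing factors $d_\delta$ is delicate, and one cannot simply pick a single inequality $\bar w_\alpha \ge \bar w_{\beta_0}$ for some $\beta_0$ adjacent to $\alpha$: the coefficients of $b_{n, \gamma}$ for the remaining $\gamma \in \Delta \setminus (I \cup \{\alpha, \beta_0\})$ in the resulting inequality may have the wrong sign. For this reason, I plan to invoke the combinatorial root-theoretic result of Jialun Li in the appendix (Corollary \ref{cor:roots}), which is designed precisely to produce the required bound. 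A more hands-on alternative would be to induct along a path $\alpha = \gamma_0, \gamma_1, \dots, \gamma_k$ in the Dynkin diagram of $\Delta \setminus I$ (whose existence is ensured by the connectedness hypothesis, with $\gamma_1 \in \Delta \setminus (I \cup \{\alpha\})$ adjacent to $\alpha$), but I expect this to reduce to the same underlying inequality.
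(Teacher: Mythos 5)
Your setup is sound as far as it goes: passing to logarithms, reducing to the irreducible case, rewriting the hypotheses as linear inequalities in the $b_{n,\gamma}$, and recognizing that the conclusion would follow from a non-negative combination of those inequalities of the form $b_{n,\alpha}\ge\sum_{\gamma\in\Delta\setminus(I\cup\{\alpha\})}\rho_\gamma b_{n,\gamma}$ with some $\rho_{\gamma_0}>0$. But the argument is circular at exactly the point where the real content lies: Corollary \ref{cor:roots} in the appendix \emph{is} Theorem \ref{rootconj}, merely restated additively on $\mathfrak{a}_I$ (the paper says explicitly that the theorem's proof is Corollary \ref{cor:roots}). Invoking it to "produce the required bound" is invoking the statement you were asked to prove. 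You have correctly identified the obstacle --- that the normalizations $d_\delta$ make it unclear which combination of the inequalities $\bar w_\alpha\ge\bar w_\beta$ works --- but you have not overcome it, and your fallback suggestion (induction along a path in the Dynkin diagram) is left entirely unexecuted.

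What is actually needed, and what the appendix supplies, is the intermediate inequality $\alpha(a)\ge\bar w_\alpha(a)$ for all $a\in\mathfrak{a}_I$ satisfying your hypotheses (Theorem \ref{th:roots}). For $I=\emptyset$ this follows from the identity $\alpha=(\alpha,\alpha)d_\alpha\bar w_\alpha+\sum_{\beta\ne\alpha}(\alpha,\beta)d_\beta\bar w_\beta$, the observation that the coefficients sum to $1$ (because each $\bar w_\beta$ is a convex combination of simple roots), and the sign condition $(\alpha,\beta)\le 0$ for $\beta\ne\alpha$, which together give $\alpha(a)=\bar w_\alpha(a)+\sum_{\beta\ne\alpha}(-(\alpha,\beta))d_\beta(\bar w_\alpha(a)-\bar w_\beta(a))\ge\bar w_\alpha(a)$. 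For general $I$ one must first express $\alpha$ in the mixed basis $I\cup\{w_\gamma:\gamma\in\Delta\setminus I\}$ and show all coefficients other than that of $w_\alpha$ are non-positive; this requires a block-matrix computation and the positivity of the inverse Gramm matrix of an irreducible root system applied to the components of $I$. Once $\alpha(a_n)\ge\bar w_\alpha(a_n)$ is in hand, expanding $\bar w_\alpha$ in simple roots, isolating the $\alpha(a_n)$ term, and using the connectedness hypothesis to guarantee $(w_\alpha,w_\beta)>0$ for some $\beta\in\Delta\setminus(I\cup\{\alpha\})$ (so that $(w_\alpha,w_\alpha)<d_\alpha$) yields precisely the non-negative combination you were seeking. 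None of this is in your proposal, so the proof is incomplete.
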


Note that, in the appendix, the condition $\bar{w}_{\alpha}(a_n)\geq 0$ appears. The calculations there take place in the Lie algebra $\mathfrak{a}_I$ of $A_I$. Passing to $A_I$ is via the exponential map, hence the condition  $\bar{w}_{\alpha}(a_n)\geq 1$ above.

The condition ``$\alpha$ is connected to $\Delta\setminus(I\cup\{\alpha\})$ in the Dynkin diagram'' is to say that if we write $\Delta$ as the disjoint union $\Delta_1\cup\cdots\cup\Delta_m$ according to the decomposition of the root system into irreducible root systems, and $\alpha\in\Delta_k$, then, among the roots in $\Delta\setminus(I\cup\{\alpha\})$, there exists at least one root that also belongs to $\Delta_k$.  In other words, $I\cup\{\alpha\}$ does not contain $\Delta_k$.

\begin{lem}\label{discon}
Let $\alpha\in\Delta$ and let $I\subset\Delta\setminus\{\alpha\}$ be such that $\alpha$ is not connected to $\Delta\setminus(I\cup\{\alpha\})$ in the Dynkin diagram. Let $J\subset \Delta$. Then 
\begin{align*}
\gA^J_{(I\cup\{\alpha\})\cap J}\subset\ker\alpha.
\end{align*}.
\end{lem}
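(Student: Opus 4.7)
The plan is to pass to Lie algebras and translate the claim into a linear-algebra statement in the root system. Writing $\gA = \gA^J\gA_J$ as an almost direct product (Lemma \ref{tori}) and using the Weyl-invariant inner product $(\cdot,\cdot)$ on $V := X^*(\gA)_\RR$ to identify $V^*$ with $V$, the Lie algebra $\mathfrak{a}^J$ corresponds to $\mathrm{span}_\RR(J) \subset V$; evaluation of a character $\beta$ on a vector of $\mathfrak{a}^J$ becomes the inner product $(\beta,\cdot)$, and $\gA^J_L = \gA^J \cap \gA_L$ corresponds, at the Lie algebra level, to $\mathrm{span}(J) \cap L^\perp$, where $L := (I \cup \{\alpha\}) \cap J$. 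It is therefore equivalent to show that the orthogonal projection $\pi_J(\alpha)$ of $\alpha$ onto $\mathrm{span}(J)$ lies in $\mathrm{span}(L)$.

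If $\alpha \in J$, then $\alpha \in L$ and $\pi_J(\alpha) = \alpha$, so the claim is immediate. Assume therefore that $\alpha \notin J$, so $L = I \cap J$, and let $\Delta_k$ denote the irreducible component of the root system containing $\alpha$. By Weyl-invariance of $(\cdot,\cdot)$, distinct irreducible components $\Delta_1,\ldots,\Delta_m$ are mutually orthogonal. Writing $\pi_J(\alpha) = \sum_{\gamma \in J} c_\gamma \gamma$ and using the normal equations $\sum_{\gamma} c_\gamma (\gamma,\beta) = (\alpha,\beta)$ for $\beta \in J$: whenever $\beta \in J \cap \Delta_{j'}$ with $\Delta_{j'} \neq \Delta_k$, the right-hand side vanishes, and the resulting linear system for $(c_\gamma)_{\gamma \in J \cap \Delta_{j'}}$ has positive-definite coefficient matrix, being a principal submatrix of the Gram matrix of the irreducible root system $\Delta_{j'}$. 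Hence $c_\gamma = 0$ for all $\gamma \in J \setminus \Delta_k$, giving $\pi_J(\alpha) \in \mathrm{span}(J \cap \Delta_k)$.

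Finally, the hypothesis that $\alpha$ is not connected to $\Delta \setminus (I \cup \{\alpha\})$ in the Dynkin diagram translates exactly to $\Delta_k \subset I \cup \{\alpha\}$. Intersecting with $J$ and using $\alpha \notin J$ gives $J \cap \Delta_k \subset J \cap I = L$, so $\pi_J(\alpha) \in \mathrm{span}(L)$, as required.

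The main subtlety lies in the initial identification of $\mathfrak{a}^J$ with $\mathrm{span}(J)$ and the corresponding descriptions of $\gA^J_L$ and of $\alpha|_{\gA^J}$ in inner-product terms; this rests on the almost direct product $\gA = \gA^J\gA_J$ and the Weyl-invariance of $(\cdot,\cdot)$. Once that translation is in place, the remainder is a short root-system calculation exploiting orthogonality of distinct irreducible components and positive definiteness of their Gram matrices.
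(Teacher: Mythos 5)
Your proof is correct, and it reaches the conclusion by a route that is essentially dual to the one in the paper. The paper argues entirely at the level of the group: it passes to the adjoint quotient $\gG^\ad=\gG_1\times\cdots\times\gG_m$, observes that the image of $\gA^J_{(I\cup\{\alpha\})\cap J}$ is a product whose $k$\textsuperscript{th} factor is $\gA^{J_k}_{k,J_k}=\{1\}$ (because the hypothesis forces $\Delta_k\subset I\cup\{\alpha\}$, hence $((I\cup\{\alpha\})\cap J)\cap\Delta_k=J_k$), and concludes since $\alpha\in\Delta_k$ only sees the $k$\textsuperscript{th} factor. You instead dualize: after identifying $\mathfrak{a}^J$ with $\mathrm{span}(J)$ via a Weyl-invariant form, the claim becomes $\pi_J(\alpha)\in\mathrm{span}\bigl((I\cup\{\alpha\})\cap J\bigr)$, which you obtain from orthogonality of distinct irreducible components together with the same translation of the connectedness hypothesis. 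Both arguments rest on the identical combinatorial fact ($\Delta_k\subset I\cup\{\alpha\}$) plus independence of the irreducible components; what your version buys is that everything happens inside $X^*(\gA)_\QQ$ with the inner product already fixed in Section \ref{keyresult}, at the cost of having to justify the identification $\mathfrak{a}^J\cong\mathrm{span}(J)$ (which holds since $\mathfrak{a}^J$ is spanned by the coroots $\beta^\vee$, $\beta\in J$, and $\beta^\vee$ is proportional to $\beta$ under the identification), whereas the paper's group-level argument avoids any metric considerations. Two small remarks: your normal-equations computation of $\pi_J(\alpha)$ can be shortened, since $\mathrm{span}(J)=\bigoplus_{j'}\mathrm{span}(J\cap\Delta_{j'})$ is an orthogonal decomposition and $\alpha\perp\mathrm{span}(J\cap\Delta_{j'})$ for $j'\neq k$ immediately gives $\pi_J(\alpha)=\pi_{J\cap\Delta_k}(\alpha)$; and your case split on $\alpha\in J$ is harmless but unnecessary in the paper's formulation, which treats both cases uniformly.
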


\begin{proof}
Let $\gG^\ad$ denote the quotient of $\gG$ by its center, and let $\ad:\gG\rightarrow\gG^\ad$ denote the natural morphism. Recall that the adjoint representation factors through $\ad$. Furthermore, $\gG^\ad$ is equal to a product $\gG_1\times\cdots\times\gG_m$ of $\QQ$--simple groups, $\gP^\ad_0=\ad(\gP_0)$ is a product $\gP_1\times\cdots\times\gP_m$ of minimal parabolic subgroups, and $\gA^\ad=\ad(\gA)$ is a product $\gA_1\times\cdots\times\gA_m$ of maximal split tori. The decomposition $\Delta=\Delta_1\cup\cdots\cup\Delta_m$ above is
\begin{align*}
\Delta=\Delta(\gP^\ad_0,\gA^\ad)=\Delta(\gP_1,\gA_1)\cup\cdots\cup\Delta(\gP_m,\gA_m).
\end{align*}
The image of $\gA^J$ in $\gA^\ad$ is therefore equal to the product $\gA^{J_1}_1\times\cdots\times\gA^{J_m}_m$, where $J_i=J\cap\Delta_i$. The assumption that $\alpha$ is not connected to $\Delta\setminus(I\cup\{\alpha\})$ in the Dynkin diagram is equivalent to the statement that $I\cup\{\alpha\}$ contains $\Delta_k$. Therefore, the image of $\gA^J_{(I\cup\{\alpha\})\cap J}$ in $\gA^\ad$ is also a product and its $k^{\rm th}$ component is $\gA^{J_k}_{k,J_k}=\{1\}$. Since $\alpha\in\Delta_k$ is trivial outside of $\gA_k$, we conclude that it is trivial on $\gA^J_{(I\cup\{\alpha\})\cap J}$.
\end{proof}

\section{The proof}\label{proof}

In this section, we will give the proof of Theorem \ref{tI1}. First we will make a useful definition.

\subsection{Maximal couples}

Consider the situation described in Section \ref{deltafunctions}. For each $n\in\NN$, let $\gH_n$ be a connected algebraic subgroup of $\gG$ of type $\mathcal{H}$ and let $g_n\in G$.

\begin{defini}
A maximal couple for $(\gH_n)_{n\in\mathbb{N}}$ and $(g_n)_{n\in\mathbb{N}}$ with respect to $\gG$, $K$, $\Delta$, and $F$ is a couple $(\alpha,(\lambda_n)_{n\in\mathbb{N}})$ for some $\alpha\in\Delta$ and $\lambda_n\in\Gamma_LF$ such that
\begin{align*}
\gH_n\subset\gN\lambda_n\gP_\alpha(\lambda_n)^{-1}
\end{align*}
for every $n\in\mathbb{N}$ and, whenever $\gH_n\subset\gN\lambda'_n\gP_\beta(\lambda'_n)^{-1}$ for some $\beta\in\Delta$ and $\lambda'_n\in\Gamma_LF$ and we write
\begin{align*}
\lambda^{-1}_ng_n=h_na_nk_n\in NH_\alpha A_\alpha K\hspace{.5cm}\text{and}\hspace{.5cm}(\lambda'_n)^{-1}g_n=h'_na'_nk'_n\in NH_\beta A_\beta K
\end{align*}
according to the rational Langlands decompositions, then
\begin{align*}
\bar{w}_\alpha(a_n)\geq\bar{w}_\beta(a'_n),
\end{align*}
where $\bar{w}_\alpha$ and $\bar{w}_\beta$ are weighted dual weights for $\bf L$, as in Definition \ref{wfw}.
\end{defini}

We have the following important properties of maximal couples.

\begin{prop}\label{mc}
\
\begin{enumerate}
\item[(i)] After possibly extracting subsequences, a maximal couple for $(\gH_n)_{n\in\mathbb{N}}$ and $(g_n)_{n\in\mathbb{N}}$ with respect to $\gG$, $K$, $\Delta$, and $F$ exists.
\item[(ii)] Suppose that $\liminf_{n\rightarrow\infty}\ \delta({\bf H}_n,g_n)=0$. If $(\alpha,(\lambda_n)_{n\in\mathbb{N}})$ is a maximal couple for $(\gH_n)_{n\in\mathbb{N}}$ and $(g_n)_{n\in\mathbb{N}}$ with respect to $\gG$, $K$, $\Delta$, and $F$ and we write
\begin{align*}
\lambda^{-1}_ng_n=h_na_nk_n\in NH_\alpha A_\alpha K
\end{align*}
according to the rational Langlands decomposition, then
\begin{align*}
\alpha(a_n)\rightarrow\infty,\text{ as }n\rightarrow\infty.
\end{align*}
\end{enumerate}
\end{prop}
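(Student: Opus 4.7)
The plan is to translate the maximisation of $\bar{w}_\beta(a'_n)$ into a minimisation of the functions $d_\beta$ of Section \ref{dfunctions}, combine this with the positivity of $\delta$ from \cite{DGU}, Lemma 4.3, and then exploit the maximality condition directly.

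The key calculation is the following. For a fixed $n$ and any valid pair $(\beta,\lambda')\in\Delta\times\Gamma_L F$ (meaning $\gH_n\subset\gN\lambda'\gP_\beta(\lambda')^{-1}$), write $(\lambda')^{-1}\pi(g_n)=h'a'k'\in H_\beta A_\beta K$; then $\pi(g_n)^{-1}\lambda'=(k')^{-1}(a')^{-1}(h')^{-1}\in K\cdot P_\beta$. Since $\chi_\beta$ is trivial on $H_\beta$ (as $H_\beta$ has no rational characters) and its restriction to $A$ is a positive rational multiple of $w_\beta$ (as recalled in Section \ref{keyresult}),
\[
d_\beta(\pi(g_n)^{-1}\lambda')=\chi_\beta(a')^{-1}=\frac{c_\beta}{\bar{w}_\beta(a')}
\]
for a positive constant $c_\beta$ depending only on $\beta$. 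Hence maximising $\bar{w}_\beta(a'_n)$ over valid pairs is equivalent to minimising $d_\beta(\pi(g_n)^{-1}\lambda')$, whose overall infimum is precisely $\delta(\gH_n,g_n)$.

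For (i), \cite{DGU}, Lemma 4.3 yields $\delta(\gH_n,g_n)>0$, and the reduction-theoretic finiteness underpinning that lemma ensures that, for each $n$ and each $\beta$, only finitely many $\lambda'\in\Gamma_L F$ push $d_\beta(\pi(g_n)^{-1}\lambda')$ below any fixed threshold. Thus the infimum is attained for each $n$; pick a minimising pair $(\beta_n,\lambda_n)$, and, since $\Delta$ is finite, extract a subsequence on which $\beta_n=\alpha$ is constant. By the equivalence above, $(\alpha,(\lambda_n)_{n\in\NN})$ is a maximal couple.

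For (ii), assume $\liminf\delta(\gH_n,g_n)=0$ and extract a subsequence with $\delta(\gH_n,g_n)\to 0$. Pick infimum-realising pairs $(\beta^*_n,\lambda^*_n)$; after a further extraction, $\beta^*_n=\beta^*$ is constant and the key calculation gives $\bar{w}_{\beta^*}(a^*_n)\to\infty$. Maximality of $(\alpha,(\lambda_n))$ then forces $\bar{w}_\alpha(a_n)\geq\bar{w}_{\beta^*}(a^*_n)\to\infty$. Finally, since $a_n\in A_\alpha=(\cap_{\beta\in\Delta\setminus\{\alpha\}}\ker\beta)^\circ$, every $\beta\in\Delta\setminus\{\alpha\}$ vanishes on $a_n$; the expansion $w_\alpha=\sum_{\beta\in\Delta}(w_\alpha,w_\beta)\beta$, which is the $\alpha$-row of the inverse Cartan matrix with strictly positive diagonal entry $(w_\alpha,w_\alpha)$, therefore collapses to $\bar{w}_\alpha(a_n)=((w_\alpha,w_\alpha)/d_\alpha)\,\alpha(a_n)$, and $\alpha(a_n)\to\infty$ follows. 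The principal obstacle is the attainment of the infimum in (i), which rests on the reduction-theoretic input underlying \cite{DGU}, Lemma 4.3; note in particular that Theorem \ref{rootconj} is not required here, since $A_\alpha$ is one-dimensional modulo the other simple roots.
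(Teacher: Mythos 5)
Your proposal is correct in substance and follows essentially the same route as the paper: for (i) you invoke the finiteness/attainment underlying \cite{DGU}, Lemma 4.3 together with the finiteness of $\Delta$, and for (ii) you combine the maximality inequality with the fact that every $\beta\in\Delta\setminus\{\alpha\}$ is trivial on $A_\alpha$, so that $\bar w_\alpha|_{A_\alpha}$ is a positive power of $\alpha|_{A_\alpha}$ (the paper reaches the same conclusion via \cite{DGU}, Lemma 4.2). One imprecision: since characters are written multiplicatively, the correct relation is $d_\beta(\pi(g_n)^{-1}\lambda')=\bar w_\beta(a')^{-c_\beta}$ for a positive exponent $c_\beta$ depending on $\beta$, not $c_\beta/\bar w_\beta(a')$; consequently, minimising $d$ jointly over all pairs $(\beta,\lambda')$ need not select the pair maximising $\bar w$, so in (i) you should maximise $\bar w_\beta$ for each fixed $\beta$ separately (equivalent, for that fixed $\beta$, to minimising $d_\beta$, and attained by the same reduction-theoretic finiteness) and then take the best of the finitely many $\beta$ before extracting a subsequence. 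This one-line repair does not affect the validity of either part.
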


\begin{proof}
\
\begin{enumerate}
\item[(i)] As explained in Section \ref{keyresult}, each $\bar{w}_\beta$ is a positive rational multiple of the corresponding $d_\beta$. Since there are only finitely many of them, the claim follows from \cite{DGU}, Lemma 4.3.
\item[(ii)] Since $\liminf_{n\rightarrow\infty}\ \delta({\bf H}_n,g_n)=0$, after possibly extracting a subsequence, there exists $\beta\in\Delta$ and $(\lambda'_n)_{n\in\mathbb{N}}$ with $\lambda'_n\in\Gamma_LF$ such that $\gH_n\subset\gN\lambda'_n\gP_\beta(\lambda'_n)^{-1}$ and
\begin{align*}
d_\beta(g^{-1}_n\lambda'_n)\rightarrow 0,\text{ as }n\rightarrow\infty.
\end{align*}
Therefore, by \cite{DGU}, Lemma 4.2, if we write
\begin{align*}
(\lambda'_n)^{-1}g_n=h'_na'_nk'_n\in NH_\beta A_\beta K,
\end{align*}
according to the rational Langlands decomposition, we have
\begin{align*}
\beta(a'_n)\rightarrow\infty,\text{ as }n\rightarrow\infty. 
\end{align*}
Now, since $(\alpha,(\lambda_n)_{n\in\mathbb{N}})$ is a maximal couple for $(\gH_n)_{n\in\mathbb{N}}$ and $(g_n)_{n\in\mathbb{N}}$ with respect to $\gG$, $K$, $\Delta$, and $F$, we have
\begin{align*}
\alpha(a_n)=\bar{w}_\alpha(a_n)^{\frac{1}{n_\alpha}}\geq\bar{w}_\beta(a'_n)^{\frac{1}{n_\alpha}}=\beta(a'_n)^{\frac{n_\beta}{n_\alpha}},
\end{align*}
for some $n_\alpha,n_\beta>0$ (see the description of weighted dual weights in Section \ref{keyresult}). Therefore,
\begin{align*}
\alpha(a_n)\rightarrow\infty,\text{ as }n\rightarrow\infty.
\end{align*}
\end{enumerate}
\end{proof}

\subsection{Proof of Theorem \ref{tI1}}
Let ${\bf P}_0$ be a minimal parabolic subgroup of $\gG$ and let $\bf A$ be a maximal split subtorus of $\gG$ contained in ${\bf P}_0$. By Remark \ref{changeK}, we may assume that $A$ is invariant under the Cartan involution of $G$ associated with $K$. Let $I_0=\Delta=\Delta(\gP_0,\gA)$. 

By Proposition \ref{2implies1} (ii), it suffices to prove Theorem \ref{tI1} (ii). By Remark \ref{changeGamma} and Theorem \ref{criterion2}, it suffices to prove the following theorem.

\begin{teo}\label{DGUtype}
Consider the situation described in Theorem \ref{tI1} (ii). After possibly extracting a subsequence, there exists a parabolic subgroup ${\bf P}$ of $\bf G$, an element $c\in G_\QQ$, and, for each $n\in\NN$, an element $\gamma_n\in\Gamma$ such that,
\begin{enumerate}
\item[(i)] for all $n\in\NN$, $c^{-1}\gamma^{-1}_n{\bf H}_n\gamma_nc$ is contained in $\bf H_P$,
\item[(ii)] we can write
\begin{align*}
c^{-1}\gamma^{-1}_ng_n=h_na_nk_n\in H_{\bf P}A_{\bf P}K,
\end{align*}	
such that
\begin{align*}
\alpha(a_n)\rightarrow\infty,\text{ as }n\rightarrow\infty,\text{ for all }\alpha\in\Phi(P,A_{\bf P}),
\end{align*}
and, 
\item[(iii)] if we denote by $\nu_n$ the homogeneous probability measure on $(c^{-1}\Gamma c\cap {H_{\bf P}})\backslash H_{\bf P}$ associated with $c^{-1}\gamma^{-1}_n{\bf H}_n\gamma_nc$ and $h_n$, then $(\nu_n)_{n\in\NN}$ converges to $\nu\in\mathcal{P}((c^{-1}\Gamma c\cap {H_{\bf P}})\backslash H_{\bf P})$.
\end{enumerate}
\end{teo}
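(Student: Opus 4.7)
The plan is to construct $\gP$, $c$, and $\gamma_n$ by an iterative algorithm, progressively shrinking a standard parabolic subgroup of $\gG$ until the measures projected to its Levi converge. Fix a minimal $\QQ$--parabolic $\gP_0$ and a maximal split subtorus $\gA\subset\gP_0$ with $A$ stable under the Cartan involution attached to $K$, and set $\Delta=\Delta(\gP_0,\gA)$. I build inductively a decreasing chain $\Delta=I_0\supset I_1\supset\cdots$ together with data $c_k\in G_\QQ$ and $\gamma_n^{(k)}\in\Gamma$ satisfying: (a)~$c_k^{-1}(\gamma_n^{(k)})^{-1}\gH_n\gamma_n^{(k)}c_k\subset\gH_{I_k}$, and (b)~writing $c_k^{-1}(\gamma_n^{(k)})^{-1}g_n=h_n^{(k)}a_n^{(k)}k_n^{(k)}\in H_{I_k}A_{I_k}K$, $\beta(a_n^{(k)})\to\infty$ for every $\beta\in\Delta\setminus I_k$. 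The base case $I_0=\Delta$ is vacuous since $\Delta\setminus I_0=\varnothing$.

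At stage $k$, I project $\gH_n^{(k)}$ and $h_n^{(k)}$ to $\gM_{I_k}$, a reductive $\QQ$--group with no rational characters whose root system is canonically identified with $I_k$. If $\liminf_n\delta_{\gM_{I_k}}>0$, Theorem~\ref{criterion1} applied inside $\gH_{I_k}$ yields, after passing to a subsequence, convergence of the $\nu_n$ required in condition (iii) of Theorem~\ref{DGUtype}, and the algorithm terminates with $\gP=\gP_{I_k}$. Otherwise, I invoke Proposition~\ref{mc} inside $\gM_{I_k}$ to extract (after a further subsequence) a maximal couple $(\alpha^{(k)},(\lambda_n^{(k)}))$ with $\alpha^{(k)}\in I_k$, set $I_{k+1}=I_k\setminus\{\alpha^{(k)}\}$, and absorb $\lambda_n^{(k)}$ into $\gamma_n^{(k+1)}$ and $c_{k+1}$. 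Because $\gZ_\gG(\gA_{I_k})=\gM_{I_k}\cdot\gA_{I_k}$ is an almost direct product, the conjugation by $\lambda_n^{(k)}$ commutes with $A_{I_k}$, and Lemma~\ref{tori} lets us factor the new torus component as $a_n^{(k+1)}=\tilde a_n^{(k)}\cdot a_n^{(k)}$ with $\tilde a_n^{(k)}\in A_{I_{k+1}}^{I_k}$.

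The restoration of invariant (b) is the heart of the proof. For the new root $\beta=\alpha^{(k)}$, I use that $\alpha^{(k)}\in I_k$ is trivial on $A_{I_k}$, so $\alpha^{(k)}(a_n^{(k+1)})=\alpha^{(k)}(\tilde a_n^{(k)})$, and Proposition~\ref{mc}(ii) inside $\gM_{I_k}$ gives $\alpha^{(k)}(\tilde a_n^{(k)})\to\infty$. For an old root $\beta\in\Delta\setminus I_k$, the inductive hypothesis gives $\beta(a_n^{(k)})\to\infty$, but the factor $\beta(\tilde a_n^{(k)})$ in the product $\beta(a_n^{(k+1)})=\beta(\tilde a_n^{(k)})\beta(a_n^{(k)})$ is a priori uncontrolled. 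This is exactly where Theorem~\ref{rootconj} enters: the maximality of the couple provides the inequalities on the weighted dual weights $\bar w$ needed to satisfy its hypotheses, with the Dynkin-disconnectedness case handled separately by Lemma~\ref{discon} (which shows that then $\tilde a_n^{(k)}$ automatically lies in $\ker\beta$, trivializing the claim). Theorem~\ref{rootconj}, applied in the ambient root system with $a_n^{(k+1)}\in A_{I_{k+1}}$, delivers $\beta(a_n^{(k+1)})\to\infty$, closing the induction. The algorithm terminates in at most $|\Delta|$ steps since $|I_k|$ strictly decreases, and combining the accumulated conjugations yields the required data.

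The principal obstacle is the transition step just described: weighted-dual-weight maximality, by itself, controls only one linear combination of simple roots and does not obviously preclude cancellation between $\beta(\tilde a_n^{(k)})$ and $\beta(a_n^{(k)})$ for individual old roots $\beta$. Theorem~\ref{rootconj}, proved in the appendix by Li, is the combinatorial input from the theory of root systems required to rule out such cancellation and thereby close the algorithm; everything else is essentially bookkeeping combining the Langlands decomposition with Propositions~\ref{mc} and the earlier convergence criterion of Theorem~\ref{criterion1}.
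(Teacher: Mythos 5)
Your iterative algorithm — shrink a standard parabolic one simple root at a time via maximal couples, terminate when Theorem~\ref{criterion1} applies, then feed the accumulated torus elements into Theorem~\ref{rootconj} — is precisely the strategy the paper follows. The genuine gap is in how you propose to prove that the removed simple roots blow up on the accumulated torus element, and it is a gap of substance, not just of detail.

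You propose to apply Theorem~\ref{rootconj} ``in the ambient root system with $a_n^{(k+1)}\in A_{I_{k+1}}$'' for each old root $\beta=\alpha_{i_j}$, using your invariant~(b) at step $k$ as the inductive hypothesis. But the hypotheses of Theorem~\ref{rootconj} in the root system $\Delta$, with $I=I_{k+1}$ and $\alpha=\beta$, require $\gamma(a_n^{(k+1)})\to\infty$ for \emph{every} $\gamma\in\Delta\setminus(I_{k+1}\cup\{\beta\})$, i.e.\ for all other removed roots $\alpha_{i_{j'}}$ with $j'\le k+1$, $j'\neq j$. Your inductive hypothesis controls these only on the \emph{previous} torus element $a_n^{(k)}$, not on $a_n^{(k+1)}$; what you need is exactly what you are in the middle of proving, so the argument as written is circular regardless of the order in which you process the old roots. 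The paper's resolution (Proposition~\ref{rootstoinfty}) is to apply Theorem~\ref{rootconj} not in $\Delta$ but inside the sub-Levi $\gM_{I_{j-1}}$, whose simple roots are $I_{j-1}$, with $a_n=\theta_n$ the partial product of torus factors from step $j$ onwards. There, the list of roots one needs to control in hypothesis~(ii) of the theorem shrinks to $\{\alpha_{i_{j+1}},\dots,\alpha_{i_{k+1}}\}$ — only the \emph{later} removed roots — so a single backward induction on the removal order closes the argument. This is done once, after the algorithm terminates, rather than being re-derived as an invariant at every step.

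Two further related imprecisions. First, the weighted-dual-weight inequalities required by Theorem~\ref{rootconj} for $\alpha_{i_j}$ must be inequalities for $\bar w^{I_{j-1}}_{\alpha_{i_j}}$, which come from the maximality of the couple chosen \emph{at step $j$}, not the latest one; invoking that maximality against the accumulated $\lambda_n^{(j)}\cdots\lambda_n^{(k+1)}$ requires verifying the containment $\gH_n^{(j-1)}\subset\gN_{I_{j-1}}\lambda_n'\gP^{I_{j-1}}_{I_{j-1}\setminus\{\alpha_{i_s}\}}(\lambda_n')^{-1}$ via Lemma~\ref{para} and then decomposing $\theta_n$ through the direct-product factorisation of tori from Lemma~\ref{tori} — this is the nontrivial bookkeeping in the paper's inner lemma and is not ``essentially bookkeeping'' in the sense of being automatic. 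Second, the disconnected case is not handled by showing that the single new factor $\tilde a_n^{(k)}$ lies in $\ker\beta$; the correct use of Lemma~\ref{discon} (again inside $\gM_{I_{j-1}}$, with $J=I_j$) shows that the \emph{entire tail} $a_n^{(k+1)}\cdots a_n^{(j+1)}\in A^{I_j}_{I_{k+1}}$ is annihilated by $\alpha_{i_j}$, reducing to $\alpha_{i_j}(a_n^{(j)})\to\infty$.
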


\subsection{Outline of the proof of Theorem \ref{DGUtype}}

The essence of the proof is to select successive maximal couples. At each stage, up to conjugation and taking subsequences, we have $\gH_n\subset\gH_I$ and $h_{I,n}\in H_I$ for some fixed $I$. We apply the criterion given by Theorem \ref{criterion1} and, if it fails, we obtain, by Proposition \ref{mc}, a maximal couple $(\alpha,(\lambda_n)_{n\in\NN})$. After conjugation and taking a subsequence, we find $\gH_n\subset\gH_{I\cup\alpha}$ and $h_{I\cup\alpha,n}\in H_{I\cup\alpha}$ and we repeat the procedure. After a finite number of steps, the criterion given by Theorem \ref{criterion1} holds, which guarantees (iii), namely, convergence of the associated homogeneous measures in a space of the form $\Gamma_{\gH_J}\backslash H_J$ for some $J$. The fact that we have chosen maximal couples allows us to apply Theorem \ref{rootconj}, which in turn yields (ii). 

\subsection{Proof of Theorem \ref{DGUtype}}

The proof is via an iterative procedure.

\subsection*{Step $1$} Let $F_0$ denote a $\Gamma$--set for $\gG$ and let $\delta=\delta(\gG,K,\Delta,F_0)$, as defined in Section \ref{deltafunctions}. Note that, if 
\begin{align*}
\liminf_{n\rightarrow\infty}\ \delta({\bf H}_n,g_n)>0,
\end{align*}
then Theorem \ref{DGUtype} follows (with $\gP=\gG$ and $c=\gamma_n=1$) from Theorem \ref{criterion1}. Therefore, assume otherwise. By Proposition \ref{mc} (i), after possibly extracting a subsequence, there exists a maximal couple $(\alpha_{i_1},(\lambda^{(1)}_n)_{n\in\NN})$ for $(\gH_n)_{n\in\mathbb{N}}$ and $(g_n)_{n\in\mathbb{N}}$ with respect to $\gG$, $K$, $\Delta$, and $F_0$. Therefore, if we write $I_1=\Delta\setminus\{\alpha_{i_1}\}$, then
\begin{align*}
\gH_n\subset\lambda^{(1)}_n\gP_{I_1}(\lambda^{(1)}_n)^{-1}
\end{align*}
and, by Proposition \ref{mc} (ii), if we write
\begin{align*}
(\lambda^{(1)}_n)^{-1}g_n=h^{(1)}_na^{(1)}_nk^{(1)}_n\in H_{I_1}A_{I_1}K,
\end{align*}
according to the rational Langlands decomposition, we have
\begin{align*}
\alpha_{i_1}(a^{(1)}_n)\rightarrow\infty,\text{ as }n\rightarrow\infty. 
\end{align*}

\subsection*{Step $2$}
We define 
\begin{align*}
\gH^{(1)}_n=(\lambda^{(1)}_n)^{-1}\gH_n\lambda^{(1)}_n
\end{align*}
and we write $\lambda^{(1)}_n=\gamma^{(1)}_nc_0\in\Gamma F_0$. Then $\gH^{(1)}_n\subset\gH_{I_1}$ because $\gH^{(1)}_n$ has no rational characters. 
We let 
\begin{align*}
\tilde{\Gamma}_1=(\lambda^{(1)}_n)^{-1}\Gamma \lambda^{(1)}_n\cap H_{I_1}=c^{-1}_0\Gamma c_0\cap H_{I_1}
\end{align*}
and we write $\Gamma_1=\pi_1(\tilde{\Gamma}_1)$, where $\pi_1:\gH_{I_1}\rightarrow\gM_{I_1}$ denotes the natural projection. We let $F_1$ denote a $\Gamma_1$--set for $\gM_{I_1}$ and we let $K_{I_1}=K\cap M_{I_1}$. Recall that $I_1$ restricts to a set of simple $\QQ$--roots for $\gM_{I_1}$ with respect to $\gA^{I_1}$. We let $\delta_1=\delta(\gH_{I_1},K_{I_1},I_1,F_1)$ and we apply Theorem \ref{criterion1} to the $\gH^{(1)}_n\subset\gH_{I_1}$ and $h^{(1)}_n\in H_{I_1}$. If
\begin{align*}
\liminf_{n\rightarrow\infty}\ \delta_1(\gH^{(1)}_n,h^{(1)}_n)>0
\end{align*}
then Theorem \ref{DGUtype} follows with $\gP=\gP_{I_1}$ and $c=c_0$ and $\gamma_n=\gamma^{(1)}_n$. Indeed, condition (iii) follows from Theorem \ref{criterion1} and condition (ii) was established in Step $1$.

Therefore, we assume otherwise. By Proposition \ref{mc} (i), after possibly extracting a subsequence, there exists a maximal couple $(\alpha_{i_2},(\lambda^{(2)}_n)_{n\in\NN})$ for $(\gH^{(1)}_n)_{n\in\mathbb{N}}$ and $(h^{(1)}_n)_{n\in\mathbb{N}}$ with respect to $\gH_{I_1}$, $K_{I_1}$, $I_1$, and $F_1$. Therefore,  if we write $I_2=I_1\setminus\{\alpha_{i_2}\}=\Delta\setminus\{\alpha_{i_1},\alpha_{i_2}\}$, then
\begin{align*}
\gH^{(1)}_n\subset\gN_{I_1}\lambda^{(2)}_n\gP^{I_1}_{I_2}(\lambda^{(2)}_n)^{-1}
\end{align*}
and, by Proposition \ref{mc} (ii), if we write
\begin{align*}
(\lambda^{(2)}_n)^{-1}h^{(1)}_n=h^{(2)}_na^{(2)}_nk^{(2)}_n\in N_{I_1}H^{I_1}_{I_2}A^{I_1}_{I_2}K_{I_1}=H_{I_2}A^{I_1}_{I_2}K_{I_1}
\end{align*}
(the equality justified by Corollary \ref{Hdecomp}), then 
\begin{align*}
\alpha_{i_2}(a^{(2)}_n)\rightarrow\infty,\text{ as }n\rightarrow\infty.
\end{align*}

We define
\begin{align*}
\gH^{(2)}_n=(\lambda^{(1)}_n\lambda^{(2)}_n)^{-1}\gH_n\lambda^{(1)}_n\lambda^{(2)}_n
\end{align*}
and we write $\lambda^{(2)}_n=\gamma^{(2)}_nc_1\in\Gamma_1F_1$. After possibly enlarging $F_1$ and extracting a subsequence, we may assume that $\gamma^{(2)}_n\in c_0^{-1}\Gamma c_0\cap M_{I_1}$ (indeed, the latter is an arithmetic subgroup of $M_{I_1}$ contained, and hence finite index, in $\Gamma_1$). 

We have $\gH^{(2)}_n\subset\gN_{I_1}\gH^{I_1}_{I_2}=\gH_{I_2}$ as before (the equality justified by Corollary \ref{Hdecomp}). We let 
\begin{align*}
\tilde{\Gamma}_2=c^{-1}_1\tilde{\Gamma}_1c_1\cap H_{I_2}=c^{-1}_1c^{-1}_0\Gamma c_0c_1\cap H_{I_2}
\end{align*}
and we write $\Gamma_2=\pi_2(\tilde{\Gamma_2})$, where $\pi_2:\gH_{I_2}\rightarrow\gM_{I_2}$ denotes the natural projection. We let $F_2$ denote a $\Gamma_2$--set for $\gM_{I_2}$.

Now we iterate the following step for $r\geq 2$.

\subsection*{Step $r+1$}

We start with groups
\begin{align*}
\gH^{(r)}_n=(\lambda^{(1)}_n\lambda^{(2)}_n\cdots\lambda^{(r)}_n)^{-1}\gH_n\lambda^{(1)}_n\lambda^{(2)}_n\cdots\lambda^{(r)}_n\subset\gH_{I_{r}}
\end{align*}
and elements
\begin{align*}
(\lambda^{(r)}_n)^{-1}\cdots(\lambda^{(2)}_n)^{-1}(\lambda^{(1)}_n)^{-1}g_n&=(\lambda^{(r)}_n)^{-1}\cdots(\lambda^{(3)}_n)^{-1}(\lambda^{(2)}_n)^{-1}h^{(1)}_na^{(1)}_nk^{(1)}_n\\
&=(\lambda^{(r)}_n)^{-1}\cdots(\lambda^{(3)}_n)^{-1}h^{(2)}_n a^{(2)}_n a^{(1)}_n k^{(2)}_n k^{(1)}_n\\
&=h^{(r)}_na^{(r)}_n\cdots a^{(1)}_n k^{(r)}_n\cdots k^{(1)}_n,
\end{align*}
where $h^{(l)}_n\in H_{I_l}$, $a^{(l)}_n\in A^{I_{l-1}}_{I_l}$, $k^{(l)}_n\in K_{I_{l-1}}=K\cap M_{I_{l-1}}$, and $\lambda^{(l)}_n=\gamma^{(l)}_nc_{l-1}\in\Gamma_{l-1}F_{l-1}$, where $I_{l-1}=\Delta\setminus\{\alpha_{i_1},\ldots,\alpha_{i_{l-1}}\}$,
\begin{align*}
\tilde{\Gamma}_{l-1}=c^{-1}_{l-2}\cdots c^{-1}_0\Gamma c_0\cdots c_{l-2}\cap H_{I_{l-1}},
\end{align*} 
$\Gamma_{l-1}=\pi_{l-1}(\tilde{\Gamma}_{l-1})$, where $\pi_{l-1}:\gH_{I_{l-1}}\rightarrow\gM_{I_{l-1}}$ is the natural projection, and $F_{l-1}$ is a $\Gamma_{l-2}$--set for $\gM_{I_{l-1}}$, and $c_l\in F_l$. As before, we may assume that 
\begin{align*}
\gamma^{(l)}_n\in c^{-1}_{l-2}\cdots c^{-1}_0\Gamma c_0\cdots c_{l-2}\cap M_{I_{l-1}}.
\end{align*}

Above, we have made repeated use of the fact that $k^{(l)}_n\in M_{I_{l-1}}$ and so, for $j<l$, it commutes with 
\begin{align*}
a^{(l-j)}_n\in A^{I_{l-j-1}}_{I_{l-j}}\subset A_{I_{l-j}}\subset A_{I_{l-1}}
\end{align*}
(the latter inclusion justified by Lemma \ref{inc}). 

We let 
\begin{align*}
\tilde{\Gamma}_r=c^{-1}_{r-1}\cdots c^{-1}_0\Gamma c_0\cdots c_{r-1}\cap H_{I_r}
\end{align*}
and we write $\Gamma_r=\pi_r(\tilde{\Gamma}_r)$, where $\pi_r:\gH_{I_r}\rightarrow\gM_{I_r}$ denotes the natural projection. We let $F_r$ denote a $\Gamma_r$--set for $\gM_{I_r}$. The set $I_r=\Delta\setminus\{\alpha_{i_1},\ldots,\alpha_{i_r}\}$ restricts to a set of simple $\QQ$--roots for $\gM_{I_r}$ with respect to $\gA^{I_r}$. We let $\delta_r=\delta(\gH_{I_r},K_{I_r},I_r,F_r)$ and apply Theorem \ref{criterion1} to the $\gH^{(r)}_n\subset\gH_{I_r}$ and $h^{(r)}_n\in H_{I_r}$. We assume that
\begin{align}\label{equal0}
\liminf_{n\rightarrow\infty}\ \delta_r(\gH^{(r)}_n,h^{(r)}_n)=0.
\end{align}

By Proposition \ref{mc} (i), after possibly extracting a subsequence, there exists a maximal couple $(\alpha_{i_{r+1}},(\lambda^{(r+1)}_n)_{n\in\NN})$ for $(\gH^{(r)}_n)_{n\in\mathbb{N}}$ and $(h^{(r)}_n)_{n\in\mathbb{N}}$ with respect to $\gH_{I_r}$, $K_{I_r}$, $I_r$, and $F_r$. Therefore,  if we write $I_{r+1}=I_r\setminus\{\alpha_{i_{r+1}}\}=\Delta\setminus\{\alpha_{i_1},\ldots,\alpha_{i_{r+1}}\}$, then
\begin{align}\label{Hn}
\gH^{(r)}_n\subset\gN_{I_{r}}\lambda^{(r+1)}_n\gP^{I_r}_{I_{r+1}}(\lambda^{(r+1)}_n)^{-1}
\end{align}
and, by Proposition \ref{mc} (ii), if we write
\begin{align*}
(\lambda^{(r+1)}_n)^{-1}h^{(r)}_n=h^{(r+1)}_na^{(r+1)}_nk^{(r+1)}_n\in H_{I_{r+1}}A^{I_{r}}_{I_{r+1}}K_{I_{r}},
\end{align*}
then $\alpha_{i_{r+1}}(a^{(r+1)})\rightarrow\infty$ as $n\rightarrow\infty$. We write $\lambda^{(r+1)}_n=\gamma^{(r+1)}_nc_r\in\Gamma_rF_r$. After possibly enlarging $F_r$ and extracting a subsequence, we may assume that 
\begin{align*}
\gamma^{(r+1)}_n\in c_{r-1}^{-1}\cdots c_0^{-1}\Gamma c_0\cdots c_{r-1}\cap M_{I_r}.
\end{align*}

We iterate this step until we can no longer achieve (\ref{equal0}). We let $r\geq 1$ be the maximal integer such that the previous step goes through and then we proceed as follows.

\subsection*{Step $r+2$}

We have groups
\begin{align*}
\gH^{(r+1)}_n=(\lambda^{(1)}_n\lambda^{(2)}_n\cdots\lambda^{(r+1)}_n)^{-1}\gH_n\lambda^{(1)}_n\lambda^{(2)}_n\cdots\lambda^{(r+1)}_n\subset\gH_{I_{r+1}}
\end{align*}
and elements
\begin{align*}
(\lambda^{(1)}_n\lambda^{(2)}_n\cdots\lambda^{(r+1)}_n)^{-1}g_n&=(\lambda^{(r+1)}_n)^{-1}\cdots(\lambda^{(2)}_n)^{-1}(\lambda^{(1)}_n)^{-1}g_n\\
&=h^{(r+1)}_na^{(r+1)}_n\cdots a^{(1)}_n k^{(r+1)}_n\cdots k^{(1)}_n,
\end{align*}
as before.  We let 
\begin{align*}
\tilde{\Gamma}_{r+1}=c^{-1}_{r}\cdots c^{-1}_0\Gamma c_0\cdots c_{r}\cap H_{I_{r+1}}
\end{align*}
and we write $\Gamma_{r+1}=\pi_{r+1}(\tilde{\Gamma}_{r+1})$, where $\pi_{r+1}:\gH_{I_{r+1}}\rightarrow\gM_{I_{r+1}}$ denotes the natural projection. We let $F_{r+1}$ denote a $\Gamma_{r+1}$--set for $\gM_{I_{r+1}}$.

We have a set $I_{r+1}=\Delta\setminus\{\alpha_{i_1},\ldots,\alpha_{i_{r+1}}\}$, which restricts to a set of simple $\QQ$--roots for $\gM_{I_{r+1}}$ with respect to $\gA^{I_{r+1}}$. We let $\delta_{r+1}=\delta(\gH_{I_{r+1}},K_{I_{r+1}},I_{r+1},F_{r+1})$ and we apply Theorem \ref{criterion1} to the $\gH^{(r+1)}_n\subset\gH_{I_{r+1}}$ and $h^{(r+1)}_n\in H_{I_{r+1}}$ to necessarily find
\begin{align*}
\liminf_{n\rightarrow\infty}\ \delta_{r+1}(\gH^{(r+1)}_n,h^{(r+1)}_n)>0
\end{align*}
(otherwise $r$ was not maximal).
We conclude that the set of homogeneous measures on $\tilde{\Gamma}_{r+1}\backslash H_{I_{r+1}}$ associated with $\gH^{(r+1)}_n$ and $h^{(r+1)}_n$ is sequentially compact. 

Therefore, Theorem \ref{DGUtype} follows (with $\gP=\gP_{I_{r+1}}$, $c=c_0\cdots c_r$, and $\gamma_n=\gamma^{(1)}_n\cdots\gamma^{(r+1)}_n$) from the following proposition.

\begin{prop}\label{rootstoinfty}
For $j=1,\ldots,r+1$, we have 
\begin{align*}
\alpha_{i_j}(a^{(r+1)}_n\cdots a^{(1)}_n)\rightarrow\infty,\text{ as }n\rightarrow\infty.
\end{align*}
\end{prop}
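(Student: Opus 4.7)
The plan is downward induction on $j$ from $j=r+1$ to $j=1$. To enable an application of Theorem \ref{rootconj} to $\gM_{I_{j-1}}$, I first decompose $b_n := a^{(r+1)}_n \cdots a^{(1)}_n$ using the almost-direct product $A = A^{I_{j-1}}\cdot A_{I_{j-1}}$. Since $I_{j-1} \subset I_l$ for $l<j$, one has $a^{(l)}_n \in A_{I_l} \subset A_{I_{j-1}}$; since $I_{l-1} \subset I_{j-1}$ for $l \geq j$, one has $a^{(l)}_n \in A^{I_{l-1}}_{I_l} \subset A^{I_{j-1}}$. Setting $b'_n := a^{(r+1)}_n\cdots a^{(j)}_n$ and $c_n := a^{(j-1)}_n\cdots a^{(1)}_n$, so that $b_n = b'_n c_n$ with $b'_n \in A^{I_{j-1}}$ and $c_n \in A_{I_{j-1}}$, observe that every $\alpha_{i_l}$ with $l \geq j$ lies in $I_{j-1}$ and is therefore trivial on $A_{I_{j-1}}$, so that $\alpha_{i_l}(b_n) = \alpha_{i_l}(b'_n)$ for such $l$. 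Moreover every $\alpha \in I_{r+1}$ lies in every $I_l$ with $j \leq l \leq r+1$, so is trivial on each $a^{(l)}_n$ with $l \geq j$, giving $b'_n \in A^{I_{j-1}}_{I_{r+1}}$.

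The base case $j=r+1$ is immediate: for $l \leq r$ the condition $\alpha_{i_{r+1}} \in I_l$ forces $\alpha_{i_{r+1}}(a^{(l)}_n) = 1$, whence $\alpha_{i_{r+1}}(b_n) = \alpha_{i_{r+1}}(a^{(r+1)}_n)\to\infty$ by construction. For the inductive step with $j<r+1$, assume the result for all indices in $\{j+1,\ldots,r+1\}$. I distinguish two cases according to whether $\alpha_{i_j}$ is connected to $\{\alpha_{i_l}:j<l\leq r+1\}$ in the Dynkin diagram of $I_{j-1}$. In the disconnected case, Lemma \ref{discon} applied inside $\gM_{I_{j-1}}$ (with $\alpha = \alpha_{i_j}$, $I = I_{r+1}$, and $J = I_{l-1}$ for each $l>j$) yields $A^{I_{l-1}}_{I_{r+1}} \subset \ker \alpha_{i_j}$; since $A^{I_{l-1}}_{I_l} \subset A^{I_{l-1}}_{I_{r+1}}$, this gives $\alpha_{i_j}(a^{(l)}_n) = 1$ for all $l>j$, so directly $\alpha_{i_j}(b_n) = \alpha_{i_j}(a^{(j)}_n)\to\infty$.

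In the connected case, I apply Theorem \ref{rootconj} to $\gM_{I_{j-1}}$ with $\alpha = \alpha_{i_j} \in I_{j-1}$, $I = I_{r+1} \subset I_{j-1} \setminus \{\alpha_{i_j}\} = I_j$, and $a_n = b'_n$. The hypothesis that $\beta(b'_n)\to\infty$ for $\beta \in I_{j-1} \setminus (I_{r+1} \cup \{\alpha_{i_j}\}) = \{\alpha_{i_l}:j<l\leq r+1\}$ is the inductive hypothesis combined with the equality $\beta(b'_n) = \beta(b_n)$. The $\gM_{I_{j-1}}$-analogue of Lemma \ref{trivial} shows that $\bar{w}^{I_{j-1}}_{\alpha_{i_j}}$ is trivial on $A^{I_{l-1}}$ for $l>j$ (because $\alpha_{i_j} \notin I_{l-1}$), so $\bar{w}^{I_{j-1}}_{\alpha_{i_j}}(b'_n) = \bar{w}^{I_{j-1}}_{\alpha_{i_j}}(a^{(j)}_n)$; this is eventually $\geq 1$ since $\alpha_{i_j}(a^{(j)}_n)\to\infty$ and $\bar{w}^{I_{j-1}}_{\alpha_{i_j}}$ restricts on $A^{I_{j-1}}_{I_j}$ to a positive power of $\alpha_{i_j}$. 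The conclusion of Theorem \ref{rootconj} then yields $\alpha_{i_j}(b'_n)\to\infty$, hence $\alpha_{i_j}(b_n)\to\infty$.

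The hard part will be verifying the weight maximum inequality $\bar{w}^{I_{j-1}}_{\alpha_{i_j}}(b'_n)\geq \bar{w}^{I_{j-1}}_{\alpha_{i_l}}(b'_n)$ for $j<l\leq r+1$. Using the $\gM_{I_{j-1}}$-analogue of Lemma \ref{trivial} once more, the right-hand side equals $\prod_{m=j}^{l}\bar{w}^{I_{j-1}}_{\alpha_{i_l}}(a^{(m)}_n)$. I expect this product to coincide with $\bar{w}^{I_{j-1}}_{\alpha_{i_l}}(a'_n)$ for the competitor at step $j$ arising from $\lambda'_n := \lambda^{(j+1)}_n \cdots \lambda^{(l)}_n$, so that the desired inequality becomes the defining maximality property of the step-$j$ couple. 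Formalising this identification, via the compatibility of iterated rational Langlands decompositions with the nested standard parabolics $P^{I_{j-1}}_{I_{r+1}} \subset P^{I_{j-1}}_{I_l}$ of $\gM_{I_{j-1}}$, is the technical heart of the argument I anticipate.
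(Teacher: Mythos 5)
Your architecture is the same as the paper's: reduce to $b'_n=\theta_n=a^{(r+1)}_n\cdots a^{(j)}_n$ by killing the lower factors, split into the Dynkin-disconnected case (Lemma \ref{discon}) and the connected case (Theorem \ref{rootconj}), and feed the theorem with the inductive hypothesis plus the weight-maximality inequality. The reductions you do carry out ($b'_n\in A^{I_{j-1}}_{I_{r+1}}$, triviality of $\bar{w}^{I_{j-1}}_{\alpha_{i_j}}$ on the higher factors, eventual positivity of $\bar{w}^{I_{j-1}}_{\alpha_{i_j}}(b'_n)$) are correct and match the paper. However, the proof is incomplete precisely where the proposition is non-trivial: the inequality $\bar{w}^{I_{j-1}}_{\alpha_{i_j}}(b'_n)\geq\bar{w}^{I_{j-1}}_{\alpha_{i_l}}(b'_n)$ is only ``anticipated'', and extracting it from maximality of the step-$j$ couple is the entire reason the algorithm selects maximal couples in the first place. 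Without it, Theorem \ref{rootconj} cannot be invoked and the induction collapses.

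Moreover, the competitor you propose is mis-indexed. The step-$j$ couple $(\alpha_{i_j},(\lambda^{(j)}_n))$ is maximal for $(\gH^{(j-1)}_n)$ and $(h^{(j-1)}_n)$, so a competitor $(\alpha_{i_l},(\lambda'_n))$ is compared via the rational Langlands decomposition of $(\lambda'_n)^{-1}h^{(j-1)}_n$; since $h^{(j-1)}_n=\lambda^{(j)}_nh^{(j)}_na^{(j)}_nk^{(j)}_n$, the product $(\lambda'_n)^{-1}h^{(j-1)}_n$ only telescopes into $h^{(m)}_na^{(m)}_n\cdots a^{(j)}_nk^{(m)}_n\cdots k^{(j)}_n$ if $\lambda'_n$ \emph{begins} with $\lambda^{(j)}_n$; your choice $\lambda^{(j+1)}_n\cdots\lambda^{(l)}_n$ does not produce the product $\prod_{m=j}^{l}\bar{w}^{I_{j-1}}_{\alpha_{i_l}}(a^{(m)}_n)$ you need. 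The paper takes $\lambda'_n=\lambda^{(j)}_n\cdots\lambda^{(r+1)}_n$, uses Lemma \ref{para} to upgrade the step-$(r+1)$ containment $\gH^{(r)}_n\subset\gN_{I_r}\lambda^{(r+1)}_n\gP^{I_r}_{I_{r+1}}(\lambda^{(r+1)}_n)^{-1}$ to $\gH^{(j-1)}_n\subset\gN_{I_{j-1}}\lambda'_n\gP^{I_{j-1}}_{I_{j-1}\setminus\{\alpha_{i_l}\}}(\lambda'_n)^{-1}$ (so that $(\alpha_{i_l},(\lambda'_n))$ is a legitimate competitor), and then uses Lemma \ref{tori} to split $\theta_n$ as $b_n'' c_n''\in A^{I_{j-1}\setminus\{\alpha_{i_l}\}}_{I_{r+1}}\cdot A^{I_{j-1}}_{I_{j-1}\setminus\{\alpha_{i_l}\}}$, identifying $c_n''$ as the $A$-component of the competing decomposition; maximality then gives $\bar{w}^{I_{j-1}}_{\alpha_{i_j}}(a^{(j)}_n)\geq\bar{w}^{I_{j-1}}_{\alpha_{i_l}}(c_n'')=\bar{w}^{I_{j-1}}_{\alpha_{i_l}}(\theta_n)$. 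These two lemmas and the explicit re-decomposition are exactly the content your sketch defers, so as written the argument has a genuine gap.
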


\begin{proof}
We will prove the proposition by induction. The base case is the following.

\begin{lem}
We have
\begin{align*}
\alpha_{i_{r+1}}(a^{(r+1)}_n\cdots a^{(1)}_n)\rightarrow\infty\text{ as }n\rightarrow\infty.
\end{align*}
\end{lem}

\begin{proof}
For $j\geq 1$, we have 
\begin{align*}
a^{(r+1-j)}_n\in A^{I_{r-j}}_{I_{r+1-j}}\subset A_{I_{r+1-j}}\subset A_{I_r}\subset\ker\alpha_{i_{r+1}},
\end{align*}
where the outer inclusions are part of the definitions, and the middle inclusion is justified by Lemma \ref{inc}. We conclude that
\begin{align*}
\alpha_{i_{r+1}}(a^{(r+1)}_n\cdots a^{(1)}_n)=\alpha_{i_{r+1}}(a^{(r+1)}_n)
\end{align*}
and so the result follows from the fact that $\alpha_{i_{r+1}}(a^{(r+1)}_n)\rightarrow\infty$, as $n\rightarrow\infty$ (see Step $r+1$).
\end{proof}

The inductive step is given by the following lemma.

\begin{lem}\label{induction}
Let $0\leq l\leq r-1$ and assume that
\begin{align*}
\alpha_{i_{r+1-j}}(a^{(r+1)}_n\cdots a^{(1)}_n)\rightarrow\infty,\text{ as }n\rightarrow\infty
\end{align*}
for $j=0,\ldots,l$. Then
\begin{align*}
\alpha_{i_{r-l}}(a^{(r+1)}_n\cdots a^{(1)}_n)\rightarrow\infty,\text{ as }n\rightarrow\infty.
\end{align*}
\end{lem}

\begin{proof}
For $s\geq l+2$, we have
\begin{align*}
a^{(r+1-s)}_n\in A^{I_{r-s}}_{I_{r+1-s}}\subset A_{I_{r+1-s}}\subset A_{I_{r-l-1}}\subset\ker\alpha_{i_{r-l}},
\end{align*}
where the outer inclusions are part of the definitions, and the middle inclusion is justified by Lemma \ref{inc}. We conclude that
\begin{align*}
\alpha_{i_{r-l}}(a^{(r+1)}_n\cdots a^{(1)}_n)=\alpha_{i_{r-l}}(a^{(r+1)}_n \cdots a^{(r-l)}_n).
\end{align*}
and so it suffices to show that
\begin{align*}
\alpha_{i_{r-l}}(a^{(r+1)}_n \cdots a^{(r-l)}_n)\rightarrow\infty,\text{ as }n\rightarrow\infty.
\end{align*}

Now we are in the situation of Section \ref{keyresult}, where the ambient group is $\gM_{I_{r-l-1}}$, the maximal split torus is $\gA^{I_{r-l-1}}$, and the set of simple roots is $I_{r-l-1}$. Note that
\begin{align*}
\theta_n=a^{(r+1)}_n \cdots a^{(r-l)}_n=a^{(r+1)}_n\cdots a^{(r-l+1)}_n\cdot a^{(r-l)}_n\in A^{I_{r-l}}_{I_{r+1}}\cdot A^{I_{r-l-1}}_{I_{r-l}}=A^{I_{r-l-1}}_{I_{r+1}}.
\end{align*}
Therefore, if $\alpha_{i_{r-l}}$ is not connected to 
\begin{align*}
I_{r-l-1}\setminus(I_{r+1}\cup\{\alpha_{i_{r-l}}\})=\{\alpha_{i_{r-l+1}},\ldots,\alpha_{i_{r+1}}\}
\end{align*}
in the Dynkin diagram of $\gM_{I_{r-l-1}}$, then we can apply Lemma \ref{discon} (with $I=I_{r+1}$, $\alpha=\alpha_{i_{r-l}}$, and $J=I_{r-l}$) to conclude that
\begin{align*}
\alpha_{i_{r-l}}(a^{(r+1)}_n \cdots a^{(r-l)}_n)=\alpha_{i_{r-l}}(a^{(r-l)}_n).
\end{align*}
(Note that $(I_{r+1}\cup\{\alpha_{i_{r-l}}\})\cap I_{r-1}=I_{r+1}$.) Then the result follows from the fact that $\alpha_{i_{r-l}}(a^{(r-l)}_n)\rightarrow\infty,\text{ as }n\rightarrow\infty$ (by Step $r-l$).

Therefore, suppose that $\alpha_{i_{r-l}}$ is connected to 
\begin{align*}
I_{r-l-1}\setminus(I_{r+1}\cup\{\alpha_{i_{r-l}}\})=\{\alpha_{i_{r-l+1}},\ldots,\alpha_{i_{r+1}}\}
\end{align*}
in the Dynkin diagram of $\gM_{I_{r-l-1}}$. Therefore, by Theorem \ref{rootconj} (with $I=I_{r+1}$, $\alpha=\alpha_{i_{r-l}}$, and $a_n=\theta_n$), Lemma \ref{induction} follows from the following lemma. Note that 
\begin{align*}
\bar{w}^{I_{r-l-1}}_{i_{r-l}}(\theta_n)=\bar{w}^{I_{r-l-1}}_{i_{r-l}}(a^{(r-l)}_n)=\alpha_{i_{r-l}}(a^{(r-l)}_n)^{n_{i_{r-l}}}\rightarrow\infty,\text{ as }n\rightarrow\infty,
\end{align*}
by Step $r-l$, where $n_{i_{r-l}}>0$. Hence, $\bar{w}^{I_{r-l-1}}_{i_{r-l}}(\theta_n)\geq 1$ for $n$ large enough.

\begin{lem}
For $k=r-l+1,\ldots,r+1$, we have
\begin{align*}
\bar{w}^{I_{r-l-1}}_{i_{r-l}}(\theta_n)\geq\bar{w}^{I_{r-l-1}}_{i_{k}}(\theta_n).
\end{align*}
\end{lem}

\begin{proof}
Fix a $k$ as in the statement of the lemma. 

From (\ref{Hn}), we see that $\gH^{(r-l-1)}_n=\lambda^{(r-l)}_n\cdots\lambda^{(r)}_n\gH^{(r)}_n(\lambda^{(r)}_n)^{-1}\cdots(\lambda^{(r-l)}_n)^{-1}$ is contained in
\begin{align*}
\lambda^{(r-l)}_n\cdots\lambda^{(r+1)}_n\gN_{I_r}\gP^{I_r}_{I_{r+1}}(\lambda^{(r+1)}_n)^{-1}\cdots(\lambda^{(r-l)}_n)^{-1}.
\end{align*}
Applying Lemma \ref{para} (with $I=I_r$, $J=I_{r+1}$, $I'=I_{r-l-1}$, and $J'=I_{r-l-1}\setminus\{\alpha_{i_{k}}\}$), we then see that $\gH^{(r-l-1)}_n$ is contained in
\begin{align*}
\lambda^{(r-l)}_n\cdots\lambda^{(r+1)}_n\gN_{I_{r-l-1}}\gP^{I_{r-l-1}}_{I_{r-l-1}\setminus\{\alpha_{i_{k}}\}}(\lambda^{(r+1)}_n)^{-1}\cdots(\lambda^{(r-l)}_n)^{-1}.
\end{align*}
In other words, 
\begin{align}\label{Hr-l-1cont}
\gH^{(r-l-1)}_n\subset\gN_{I_{r-l-1}}\lambda^{(r-l)}_n\cdots\lambda^{(r+1)}_n\gP^{I_{r-l-1}}_{I_{r-l-1}\setminus\{\alpha_{i_{k}}\}}(\lambda^{(r+1)}_n)^{-1}\cdots(\lambda^{(r-l)}_n)^{-1},
\end{align}
where we use the fact that $\lambda^{(r-l)}_n\cdots\lambda^{(r+1)}_n\in\gM_{I_{r-l-1}}(\QQ)$ which normalizes $\gN_{I_{r-l-1}}$.

By definition, we have
\begin{align}\label{hr-2}
(\lambda^{(r+1)}_n)^{-1}\cdots(\lambda^{(r-l)}_n)^{-1}h^{(r-l-1)}_n
=h^{(r+1)}_na^{(r+1)}_n\cdots a^{(r-l)}_nk^{(r+1)}_n\cdots k^{(r-l)}_n.
\end{align}
By Lemma \ref{tori} (with $I_1=I_{r-l-1}$, $I_3=I_{r+1}$, and $I_2=I_{r-l-1}\setminus\{\alpha_{i_k}\}$), we have a direct product decomposition
\begin{align*}
A^{I_{r-l-1}}_{I_{r+1}}=A^{ I_{r-l-1}\setminus\{\alpha_{i_{k}}\}}_{I_{r+1}}A^{I_{r-l-1}}_{I_{r-l-1}\setminus\{\alpha_{i_{k}}\}},
\end{align*}
and so we can write
\begin{align*}
a^{(r+1)}_n\cdots a^{(r-l+1)}_n\cdot a^{(r-l)}_n\in A^{I_{r-l}}_{I_{r+1}}\cdot A^{I_{r-l-1}}_{I_{r-l}}
\end{align*}
as
\begin{align*}
b^{(r-l-1,k)}_n\cdot c^{(r-l-1,k)}_n\in A^{ I_{r-l-1}\setminus\{\alpha_{i_{k}}\}}_{I_{r+1}}\cdot A^{I_{r-l-1}}_{I_{r-2}\setminus\{\alpha_{i_{k}}\}}.
\end{align*}
Therefore, from (\ref{hr-2}), we obtain
\begin{align}\label{langlands}
(\lambda^{(r+1)}_n)^{-1}&\cdots(\lambda^{(r-l)}_n)^{-1}h^{(r-l-1)}_n=h^{(r+1)}_nb^{(r-l-1,k)}_n\cdot c^{(r-l-1,k)}_n\cdot k^{(r+1)}_n\cdots k^{(r-l)}_n,
\end{align}
which is the rational Langlands decomposition in $H_{I_{r-l-1}\setminus\{\alpha_{i_{k}}\}}\cdot A^{I_{r-l-1}}_{I_{r-l-1}\setminus\{\alpha_{i_{k}}\}}\cdot K_{I_{r-l-1}}$, where we use the facts that 
\begin{align*}
h^{(r+1)}_n\in H_{I_{r+1}}\subset H_{I_{r-l-1}\setminus\{\alpha_{i_k}\}}\hspace{.5cm}\text{and}\hspace{.5cm}b^{(r-l-1,k)}_n\in A^{ I_{r-l-1}\setminus\{\alpha_{i_k}\}}_{I_{r+1}}\subset H_{I_{r-l-1}\setminus\{\alpha_{i_k}\}}.
\end{align*}	 

Therefore, since $(\alpha_{i_{r-l}},(\lambda^{(r-l)}_n)_{n\in\NN})$ was a maximal couple for $(\gH^{(r-l-1)}_n)_{n\in\mathbb{N}}$ and $(h^{(r-l-1)}_n)_{n\in\mathbb{N}}$ with respect to $\gH_{I_{r-l-1}}$, $K_{I_{r-l-1}}$, $I_{r-l-1}$, and $F_{r-l-1}$, it follows from (\ref{Hr-l-1cont}) and (\ref{langlands}) that
\begin{align*}
\bar{w}^{I_{r-l-1}}_{i_{r-l}}(\theta_n)=\bar{w}^{I_{r-l-1}}_{i_{r-l}}(a^{(r+1)}_n\cdots a^{(r-l)}_n)&=\bar{w}^{I_{r-l-1}}_{i_{r-l}}(a^{(r-l)}_n)\\
&\geq \bar{w}^{I_{r-l-1}}_{i_{k}}(c^{(r-l-1,k)}_n)\\
&=\bar{w}^{I_{r-l-1}}_{i_{k}}(b^{(r-l-1,k)}_nc^{(r-l-1,k)}_n)\\
&=\bar{w}^{I_{r-l-1}}_{i_{k}}(a^{(r+1)}_n\cdots a^{(r-l)}_n)=\bar{w}^{I_{r-l-1}}_{i_{k}}(\theta_n),
\end{align*}
where the second and third equalities are consequences of Lemma \ref{trivial}. 
\end{proof}
This completes the proof of Lemma \ref{induction}.
\end{proof}
This completes the proof of Proposition \ref{rootstoinfty}.
\end{proof}
This completes the proof of Theorem \ref{DGUtype}.

\section{Appendix: An inequality for simple roots and dual weights}\label{appen}

\begin{center}
\textsc{Jialun Li} 
\end{center}

\vspace{0.5cm}

The purpose of this appendix, is to prove Theorem \ref{rootconj}. The main ingredient is an inequality between simple roots and dual weights. For this discussion, we refer without further mention to \cite{bourbaki46}.

Let $E$ be a linear space. Let $\Pi$ be a root system in $E$, which generates $E$, and let $(\cdot ,\cdot )$ be the inner product on $E$ invariant under the Weyl group. Fix a set $\Delta$ of simple roots in $\Pi$. Let $\{w_\alpha \}_{\alpha\in\Delta}$ be the set of dual weights in $E$, which are defined by the relations
\[( w_\alpha,\beta)=\delta_{\alpha\beta} \]
for $\beta\in\Delta$, where $\delta_{\alpha\beta}$ is the Kronecker symbol. The set of dual weights and the set of simple roots $\Delta$ form two bases of $E$. Using the inner product, we can easily compute the coefficients in the transition matrix. Then we have a relation between simple roots and dual weights,
\begin{equation}\label{alpha}
	\alpha=( \alpha,\alpha) w_\alpha+\sum_{\beta\in\Delta\backslash\{\alpha\}}( \alpha,\beta) w_\beta\quad\hbox{for  $\alpha\in\Delta$}
\end{equation}
and
\begin{equation}\label{eq:omega}
	w_\alpha=\sum_{\beta\in\Delta}( w_\alpha,w_\beta )\beta\quad\hbox{for  $\alpha\in\Delta$.}
\end{equation}
For $\alpha\in\Delta$, set 
$$
d_\alpha=\sum_{\beta\in\Delta}( w_\alpha,w_\beta).
$$
We recall that $( w_\alpha,w_\beta)\ge 0$, and
$( w_\alpha,w_\beta)>0$ if the root system is irreducible. In particular, it follows that $d_\alpha>0$.
We define the weighted dual weights as
\begin{equation}\label{baromega}
\bar{w}_\alpha=w_\alpha/d_\alpha=\frac{\sum_{\beta\in\Delta}( w_\alpha,w_\beta )\beta}{\sum_{\beta\in\Delta}( w_\alpha,w_\beta)}.
\end{equation}
We denote by $E^*$ the dual space of $E$ and identify $E$ with $(E^*)^*$.
For $I\subset \Delta$, we define
$$
\mathfrak{a}_I=\bigcap_{\beta\in I}\ker\beta\subset E^*.
$$
 
 \begin{teo}\label{th:roots}
 	Let $\alpha\in\Delta$ and let $I\subset\Delta\backslash\{\alpha\}$. For every $a\in \mathfrak{a}_I$ satisfying
 	 \begin{align*}
 	 \bar{w}_\alpha(a)\geq\bar{w}_\gamma(a)\;\; \hbox{for all $\gamma\in\Delta\backslash I$}\quad\quad\hbox{and}\quad\quad
 	 \bar{w}_\alpha(a)\ge 0,
 	 \end{align*}
 	 the estimate
 	 $\alpha(a)\geq \bar{w}_\alpha(a)$
 	 also holds.
 \end{teo}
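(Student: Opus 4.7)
The plan is to reduce the inequality $\alpha(a) \geq \bar{w}_\alpha(a)$ to the existence of a Farkas-type certificate, and then to construct this certificate using the combinatorics of the root system. Starting from $\bar{w}_\alpha = \frac{1}{d_\alpha}\sum_\beta (w_\alpha, w_\beta)\beta$ and $d_\alpha = \sum_\beta (w_\alpha, w_\beta)$, a direct calculation gives
\begin{equation*}
\alpha - \bar{w}_\alpha = \frac{1}{d_\alpha}\sum_{\beta \in \Delta \setminus \{\alpha\}}(w_\alpha, w_\beta)(\alpha - \beta),
\end{equation*}
so that evaluating at $a \in \mathfrak{a}_I$, where $\beta(a) = 0$ for $\beta \in I$, yields
\begin{equation*}
d_\alpha\bigl(\alpha(a) - \bar{w}_\alpha(a)\bigr) = \Bigl(\sum_{\beta \in I}(w_\alpha, w_\beta)\Bigr)\alpha(a) + \sum_{\beta \in \Delta \setminus (I \cup \{\alpha\})}(w_\alpha, w_\beta)\bigl(\alpha(a) - \beta(a)\bigr).
\end{equation*}
All coefficients $(w_\alpha, w_\beta)$ are non-negative.

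One might hope to prove $\alpha(a) \geq 0$ and $\alpha(a) \geq \beta(a)$ for every $\beta \in \Delta \setminus I$, making every summand individually non-negative. This is too strong in general: already in $A_4$ with $\alpha = \alpha_2$ and $I = \{\alpha_4\}$, the point with $(x_1, x_2, x_3) = (1, 5/2, 3)$ satisfies $\bar{w}_2(a) \geq \bar{w}_\gamma(a)$ for $\gamma \in \{\alpha_1, \alpha_3\}$ and $\bar{w}_2(a) \geq 0$, yet $x_2 < x_3$, so the final non-negativity must come from cancellation between the terms. Instead I would aim to exhibit non-negative scalars $c_\gamma$ (for $\gamma \in \Delta \setminus (I \cup \{\alpha\})$) and $c_0$ with
\begin{equation*}
\alpha - \bar{w}_\alpha \equiv \sum_{\gamma \in \Delta \setminus (I \cup \{\alpha\})} c_\gamma(\bar{w}_\alpha - \bar{w}_\gamma) + c_0\bar{w}_\alpha \pmod{\mathrm{span}(I)}.
\end{equation*}
Such a decomposition is the Farkas certificate dual to the conclusion, and once it is in hand the theorem follows at once by plugging in $a$ and applying the hypotheses.

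The main obstacle is constructing the certificate $(c_\gamma, c_0)$. I would first reduce to the irreducible case, since $\alpha$, $\bar{w}_\alpha$, and each $\bar{w}_\alpha - \bar{w}_\gamma$ with $\gamma$ in the irreducible component $\Delta_k$ of $\alpha$ lie in $\mathrm{span}(\Delta_k)$, and the $\gamma$ outside this component contribute nothing extra. In the irreducible setting every $(w_\gamma, w_\delta)$ is strictly positive and the matrix $M_{\gamma\delta} = (w_\gamma, w_\delta)/d_\gamma$ is an irreducible row-stochastic matrix with reversible stationary distribution proportional to $(d_\gamma)$. The construction of $(c_\gamma, c_0)$ then hinges on finer combinatorial features of the inverse Cartan matrix --- such as monotonicity of its entries along chains in the Dynkin diagram --- and one anticipates that $c_0$ enters non-trivially precisely when $I \cup \{\alpha\}$ disconnects $\alpha$ from part of its component. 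Carrying out this case-analysis uniformly across the classical and exceptional Dynkin types, or replacing it by a spectral argument tailored to these particular matrices, constitutes the technical heart of the proof.
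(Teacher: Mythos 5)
You have correctly identified the dual formulation: the theorem is equivalent to exhibiting non-negative coefficients expressing $\alpha-\bar{w}_\alpha$ modulo $\mathrm{span}(I)$ as a combination of $\bar{w}_\alpha$ and the differences $\bar{w}_\alpha-\bar{w}_\gamma$, and this is precisely the certificate that the paper's proof produces. But your proposal stops exactly where the proof begins: the certificate is never constructed, and you defer its construction to "finer combinatorial features of the inverse Cartan matrix," a case analysis over Dynkin types, or an unspecified spectral argument. That is the entire content of the theorem, so as it stands this is a genuine gap, not a proof.

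Moreover, the missing step does not require any case analysis and is considerably more elementary than you anticipate. The paper expands $\alpha$ in the \emph{mixed basis} $I\cup\{w_\gamma:\gamma\in\Delta\setminus I\}$, writing $\alpha=\sum_{\beta\in I}c_\beta\beta+\sum_{\gamma\in\Delta\setminus I}c_\gamma w_\gamma$, and shows that $c_\delta\le 0$ for every $\delta\ne\alpha$. The change-of-basis matrix from $\{w_\gamma\}_{\gamma\in\Delta}$ to this mixed basis is block upper-triangular with upper-left block $B=\bigl((\alpha_i,\alpha_j)\bigr)_{i,j\in I}$; the only inputs are that a connected subgraph of a Dynkin diagram is a Dynkin diagram and that the inverse Gram matrix of an irreducible root system has positive entries, whence $B^{-1}$ and $-B^{-1}C$ have non-negative entries and the sign pattern of the row of $A=\bigl((\alpha_i,\alpha_j)\bigr)$ indexed by $\alpha$ forces $c_\delta\le 0$ for $\delta\ne\alpha$. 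Evaluating at $a\in\mathfrak{a}_I$ kills the $I$-part, comparing coefficients of the simple roots gives $\sum_{\gamma\in\Delta\setminus I}c_\gamma d_\gamma\ge 1$, and the two hypotheses on $a$ finish the argument. (Your computation for $I=\emptyset$ is essentially the paper's base case, with $c_\beta=-(\alpha,\beta)d_\beta\ge 0$ and $c_0=0$; note there the natural coefficients come from the Gram matrix $(\alpha,\beta)$, not from $(w_\alpha,w_\beta)$ as in your first display.) You should also double-check your claim that $c_0$ "enters non-trivially" in the disconnected case: the uniform construction handles connected and disconnected configurations identically, and the connectedness hypothesis is only needed later, in Corollary 6.2, to upgrade $\alpha(a_n)\ge\bar{w}_\alpha(a_n)$ to $\alpha(a_n)\to\infty$.
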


From this, we deduce the following corollary, which is used in the proof of Theorem \ref{tI1}.

\begin{cor}\label{cor:roots}
	Let $\alpha\in\Delta$ and let $I\subset\Delta\backslash\{\alpha\}$ such that $\alpha$ is connected to 
	$\Delta\backslash(I\cup\{\alpha\})$ in the Dynkin diagram. For each $n\in\mathbb{N}$, let 
	$a_n\in \mathfrak{a}_I$ and suppose that 
	$$
	\beta(a_n)\to \infty,\quad\hbox{as $n\to\infty$}
	$$
	for all $\beta\in \Delta\backslash (I\cup\{\alpha\})$.
	Furthermore, suppose that
	$$
	\bar w_\alpha(a_n)\ge \bar w_\beta(a_n)\;\;\hbox{for all $\beta\in \Delta\backslash I$}\quad\quad\hbox{and}\quad\quad \bar w_\alpha(a_n)\ge 0.
	$$ 
	Then
	$$
	\alpha(a_n)\to \infty,\quad\hbox{as $n\to\infty$.}
	$$ 
\end{cor}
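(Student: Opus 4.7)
The plan is to deduce Corollary~\ref{cor:roots} directly from Theorem~\ref{th:roots} together with an analysis of the expansion of $\bar{w}_\alpha$ in terms of the simple roots. First, I would verify that the hypotheses of Theorem~\ref{th:roots} are met by each $a_n$: the comparison $\bar{w}_\alpha(a_n)\ge\bar{w}_\gamma(a_n)$ for $\gamma\in\Delta\setminus I$ and the positivity $\bar{w}_\alpha(a_n)\ge 0$ are assumed, so Theorem~\ref{th:roots} yields
\[
\alpha(a_n)\ \ge\ \bar{w}_\alpha(a_n)\ =\ \frac{1}{d_\alpha}\sum_{\beta\in\Delta}(w_\alpha,w_\beta)\,\beta(a_n).
\]

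Next, I would simplify the right-hand side using two observations. Since $a_n\in\mathfrak{a}_I$, every $\beta\in I$ satisfies $\beta(a_n)=0$, so the sum may be restricted to $\beta\in\Delta\setminus I$. Writing $\Delta=\Delta_1\cup\cdots\cup\Delta_m$ in irreducible components and letting $\Delta_k$ denote the component of $\alpha$, we have $(w_\alpha,w_\beta)=0$ for $\beta\notin\Delta_k$, while $(w_\alpha,w_\beta)>0$ for every $\beta\in\Delta_k$ by positivity in the irreducible case. Hence
\[
\bar{w}_\alpha(a_n)\ =\ \frac{(w_\alpha,w_\alpha)}{d_\alpha}\,\alpha(a_n)\ +\ \frac{1}{d_\alpha}\sum_{\beta\in\Delta_k\setminus(I\cup\{\alpha\})}(w_\alpha,w_\beta)\,\beta(a_n).
\]

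Setting $c=(w_\alpha,w_\alpha)/d_\alpha$, I would then observe that the connectedness hypothesis guarantees the existence of some $\beta_0\in\Delta\setminus(I\cup\{\alpha\})$ lying in $\Delta_k$; in particular $\Delta_k$ contains a root distinct from $\alpha$, so $d_\alpha=\sum_{\beta\in\Delta_k}(w_\alpha,w_\beta)>(w_\alpha,w_\alpha)$, giving $c<1$. Combining with $\alpha(a_n)\ge\bar{w}_\alpha(a_n)$ and rearranging yields
\[
(1-c)\,\alpha(a_n)\ \ge\ \frac{1}{d_\alpha}\sum_{\beta\in\Delta_k\setminus(I\cup\{\alpha\})}(w_\alpha,w_\beta)\,\beta(a_n).
\]

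Finally, every coefficient $(w_\alpha,w_\beta)$ on the right is strictly positive, and by hypothesis each $\beta(a_n)\to\infty$ as $n\to\infty$; moreover the connectedness assumption ensures that the index set is nonempty (it contains $\beta_0$). Therefore the right-hand side tends to $+\infty$, and dividing by $1-c>0$ gives $\alpha(a_n)\to\infty$, as required. The only genuinely delicate step in this plan is the strict inequality $c<1$: it would fail exactly when $\{\alpha\}$ is a full irreducible component of $\Delta$, and this is precisely the configuration ruled out by the connectedness hypothesis, which is why that assumption cannot be dropped.
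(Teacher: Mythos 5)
Your proposal is correct and follows essentially the same route as the paper: apply Theorem \ref{th:roots}, expand $\bar{w}_\alpha$ via \eqref{eq:omega}, rearrange to isolate $\left(1-\frac{(w_\alpha,w_\alpha)}{d_\alpha}\right)\alpha(a_n)$, and use the connectedness hypothesis to guarantee a strictly positive coefficient $(w_\alpha,w_{\beta_0})$ with $\beta_0\in\Delta\setminus(I\cup\{\alpha\})$, hence both the divergence of the right-hand side and the strict inequality $(w_\alpha,w_\alpha)<d_\alpha$. Your explicit reduction to the irreducible component $\Delta_k$ of $\alpha$ is a harmless refinement of the paper's observation that $(w_\alpha,w_\beta)\ge 0$ with strict positivity for at least one relevant $\beta$.
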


\begin{proof}[Proof of Corollary \ref{cor:roots}]
Using Theorem \ref{th:roots} and 	\eqref{eq:omega}, we obtain 
 \[\alpha(a_n)\geq \bar{w}_\alpha(a_n)=\frac{1}{d_\alpha}
 \sum_{\beta\in\Delta\backslash I}( w_\alpha,w_\beta )\beta(a_n), \]
 and 
\[\left(1-\frac{( w_\alpha,w_\alpha )}{d_\alpha}\right)\alpha(a_n)\geq 
\frac{1}{d_\alpha}\sum_{\beta\in\Delta\backslash (I\cup\{\alpha\})}( w_\alpha,w_\beta )\beta(a_n). \]
Here $( w_\alpha,w_\beta )\ge 0$ and, moreover, it follows
from our connectedness assumption that $( w_\alpha,w_\beta )> 0$ for at least
one $\beta\in \Delta\backslash (I\cup\{\alpha\})$. In particular,
$( w_\alpha,w_\alpha )<d_\alpha$.
Hence, the last estimate implies the corollary.
\end{proof}

Now we proceed with the proof of Theorem \ref{th:roots}.
We first prove the case  when $I=\emptyset$.

\begin{proof}[Proof of Theorem \ref{th:roots} for $I=\emptyset$]
	
	By \eqref{alpha} and \eqref{baromega}, we know
	\begin{equation}\label{alpha2}
			\alpha=( \alpha,\alpha) d_\alpha\bar{w}_\alpha+\sum_{\beta\neq\alpha}(\alpha,\beta ) d_\beta\bar{w}_\beta.
	\end{equation}
	The set of simple roots forms a basis of $E$. By \eqref{baromega}, the term $\bar{w}_\beta$ is a linear combination of simple roots, and the sum of the coefficients in the expression equals $1$. Therefore by computing the coefficients of simple roots, \eqref{alpha2} implies that
	\begin{equation}\label{2d}
	( \alpha,\alpha) d_\alpha+\sum_{\beta\neq\alpha}( \alpha,\beta) d_\beta=1.
	\end{equation}
	Due to properties of simple roots, we know that $(\alpha,\beta)\leq 0$ for $\alpha\neq\beta$. Hence, \eqref{alpha2}, \eqref{2d}, and the hypothesis imply
	\[\alpha(a)=\bar{w}_\alpha(a)+\sum_{\beta\neq\alpha}(-(\alpha,\beta )) d_\beta(\bar{w}_\alpha(a)-\bar{w}_\beta(a))\geq \bar{w}_\alpha(a).\]
	The proof is complete.
\end{proof}
To prove the general case of the theorem, we need the following lemma.
\begin{lem}\label{positive}
	For $I\subset \Delta$, the set
	$I\cup\big\{w_\gamma:\,\gamma\in \Delta\backslash I \big\}$ forms a basis of $E$. Moreover, for every $\alpha\in\Delta$, we have
\[\alpha=\sum_{\beta\in I} c_\beta\beta+\sum_{\gamma\in\Delta\backslash I}c_\gamma w_\gamma, \]
where $c_\delta\leq 0$ for $\delta\in \Delta\backslash\{\alpha\}$.
\end{lem}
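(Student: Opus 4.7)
The approach is to recognize the decomposition in the lemma as the orthogonal decomposition of $\alpha$ with respect to the splitting $E = \mathrm{span}(I) \oplus F$, where $F := \mathrm{span}(I)^{\perp}$. First I would check that $F = \mathrm{span}\{w_\gamma : \gamma \in \Delta \setminus I\}$: the inclusion $\supseteq$ is immediate from $(w_\gamma, \beta) = \delta_{\gamma\beta} = 0$ for $\gamma \in \Delta \setminus I$ and $\beta \in I$, and equality follows by a dimension count. This simultaneously establishes the basis claim (as $I$ is a basis of $\mathrm{span}(I)$ and $\{w_\gamma\}_{\gamma \in \Delta \setminus I}$ is a basis of $F$) and shows that the expansion $\alpha = \sum_{\beta \in I} c_\beta \beta + \sum_{\gamma \in \Delta \setminus I} c_\gamma w_\gamma$ is exactly the orthogonal decomposition $\alpha = \pi_{\mathrm{span}(I)}(\alpha) + \pi_F(\alpha)$ written in these two bases.

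When $\alpha \in I$, the conclusion is trivial, so I focus on $\alpha \in \Delta \setminus I$. For the coefficients $c_\beta$ with $\beta \in I$, the vector $v := \pi_{\mathrm{span}(I)}(\alpha)$ is determined by the equations $(v, \beta') = (\alpha, \beta')$ for all $\beta' \in I$, giving a linear system $Mc = y$ where $M$ is the Gram matrix of $I$ and $y_{\beta'} = (\alpha, \beta')$. The entries of $y$ are non-positive because $\alpha$ and $\beta' \in I$ are distinct simple roots. The required inequality $c_\beta \le 0$ then reduces to showing that $M^{-1}$ has non-negative entries. This is precisely the Bourbaki fact already recalled in the appendix — that pairings of dual weights are non-negative — applied to the sub-root-system with $I$ as its set of simple roots.

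For the remaining coefficients $c_\gamma$ with $\gamma \in \Delta \setminus I$ and $\gamma \ne \alpha$, I would identify the basis of $F$ dual to $\{w_\delta\}_{\delta \in \Delta \setminus I}$ as $\{\pi_F(\delta)\}_{\delta \in \Delta \setminus I}$, by the one-line check $(\pi_F(\delta), w_\epsilon) = (\delta, w_\epsilon) = \delta_{\delta\epsilon}$. This yields the clean formula
\[
c_\gamma \;=\; (\pi_F(\alpha), \pi_F(\gamma)) \;=\; (\alpha, \gamma) \;-\; (\alpha, \pi_{\mathrm{span}(I)}(\gamma)).
\]
The first term is $\le 0$ (distinct simple roots). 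For the second, I would apply the previous step with $\gamma$ in place of $\alpha$ to write $\pi_{\mathrm{span}(I)}(\gamma) = \sum_{\beta \in I} d_\beta \beta$ with $d_\beta \le 0$, and combine with $(\alpha, \beta) \le 0$ to conclude $(\alpha, \pi_{\mathrm{span}(I)}(\gamma)) \ge 0$. Hence $c_\gamma \le 0$.

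The main obstacle is the non-negativity of $M^{-1}$ in the second paragraph; this is the one point where the structure of root systems enters essentially, via the Bourbaki fact $(w_\alpha, w_\beta) \ge 0$ transferred to the sub-root-system generated by $I$. Everything else is linear algebra built on the duality $(w_\gamma, \beta) = \delta_{\gamma\beta}$ and the orthogonal decomposition.
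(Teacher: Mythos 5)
Your proof is correct and is essentially the paper's argument in coordinate-free form: the orthogonal splitting $E=\mathrm{span}(I)\oplus\mathrm{span}(I)^{\perp}$ with $\mathrm{span}(I)^{\perp}=\mathrm{span}\{w_\gamma:\gamma\in\Delta\setminus I\}$ is exactly what the block-triangular change-of-basis matrix in the paper encodes, your system $Mc=y$ with $M^{-1}\ge 0$ is the paper's $B^{-1}$ applied to the non-positive row $\bigl((\alpha,\alpha_j)\bigr)_{j}$, and your formula $c_\gamma=(\alpha,\gamma)-(\alpha,\pi_{\mathrm{span}(I)}(\gamma))$ is the $-B^{-1}C$ block. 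The key input is the same in both treatments, namely the non-negativity of the inverse Gram matrix of the sub-root-system spanned by $I$ (Lemmas \ref{dynkin} and \ref{cartan} in the appendix), so no new idea is needed or missing.
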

\begin{proof}[Proof for the general case of Theorem \ref{th:roots}]
 	Using the expression in the Lemma \ref{positive} and \eqref{eq:omega}, by the same argument as for the special case $I=\emptyset$, we see that
	\[1=\sum_{\beta\in I}c_\beta+\sum_{\gamma\in\Delta\backslash I}c_\gamma d_\gamma\leq \sum_{\gamma\in\Delta\backslash I}c_\gamma d_\gamma, \]
	where the last inequality is due to Lemma \ref{positive}. 
	Therefore, by the definition of $\mathfrak{a}_I$ and Lemma \ref{positive},
	for any $a\in \mathfrak{a}_I$,
	\begin{align*}
	\alpha(a)&=\sum_{\gamma\in\Delta\backslash I}c_\gamma d_\gamma\bar{w}_\gamma(a).
	\end{align*}
	Hence, using that $c_\gamma\le 0$ for $\gamma\ne \alpha$, we deduce that 
	\begin{align*}
	\alpha(a)&\geq \left(\sum_{\gamma\in\Delta\backslash I}c_\gamma d_\gamma\right)\bar{w}_\alpha(a)\ge \bar{w}_\alpha(a),
	\end{align*}
	since $\bar{w}_\alpha(a)\geq 0$.
	The proof is complete.
\end{proof}
It remains to prove Lemma \ref{positive}.
We first recall two facts.
\begin{lem}\label{dynkin}
	A connected subgraph of a Dynkin diagram is a Dynkin diagram.
\end{lem}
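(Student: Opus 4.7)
The plan is to interpret a subgraph of the Dynkin diagram as the full (induced) subgraph determined by a subset $\Delta' \subset \Delta$ of the simple roots, with all edges and multiplicities inherited from the ambient diagram. Under this interpretation, the lemma reduces to the classical fact that a subset of simple roots spans a sub-root system whose Dynkin diagram is the induced subdiagram, together with the observation that connectedness of a Dynkin diagram is equivalent to irreducibility of the corresponding root system.

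Concretely, I would first set $E' = \mathrm{span}(\Delta') \subset E$ and $\Pi' = \Pi \cap E'$, and verify that $(\Pi', E')$ is a root system having $\Delta'$ as a set of simple roots. This is routine: the restriction of the Weyl-invariant inner product $(\cdot,\cdot)$ to $E'$ remains positive definite; the simple reflections $s_\alpha$ for $\alpha \in \Delta'$ preserve $E'$ and generate the Weyl group of $\Pi'$; and the positive roots of $\Pi'$ are precisely the positive roots of $\Pi$ that lie in $E'$. Next, I would observe that the Cartan integers $2(\alpha,\beta)/(\beta,\beta)$ for $\alpha,\beta \in \Delta'$ are computed using the same inner product in $\Pi$ and in $\Pi'$, so the Dynkin diagram of $(\Pi', \Delta')$ agrees by construction with the induced subgraph on $\Delta'$. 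Finally, since a Dynkin diagram is connected if and only if the associated root system is irreducible, connectedness of the induced subdiagram forces $\Pi'$ to be irreducible, so by the Cartan--Killing classification the subdiagram is one of the standard Dynkin diagrams $A_n, B_n, C_n, D_n, E_6, E_7, E_8, F_4, G_2$.

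I do not anticipate a substantive obstacle. The only point requiring care is the interpretation of ``subgraph'', which must mean a full subgraph on a subset of vertices (an arbitrary graph-theoretic subgraph would have no reason to satisfy the conclusion); once this convention is fixed, each of the steps above is a standard verification in the theory of root systems, e.g.\ along the lines of Bourbaki's treatment cited at the start of the appendix.
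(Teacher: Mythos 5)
Your proof is correct. Note that the paper itself offers no argument for this lemma: it is stated under the heading ``We first recall two facts'' and treated as standard background (from \cite{bourbaki46}), so there is no in-paper proof to compare against. Your route --- take $\Delta'\subset\Delta$, set $E'=\mathrm{span}(\Delta')$ and $\Pi'=\Pi\cap E'$, check that $\Delta'$ is a base of the root system $\Pi'$ (any root of $\Pi$ lying in $E'$ is an integral combination of $\Delta$ with coefficients of one sign, and by linear independence of $\Delta$ only the $\Delta'$-coefficients survive), and observe that the Cartan integers are unchanged so the diagram of $(\Pi',\Delta')$ is the induced subdiagram --- is the standard one, and your insistence that ``subgraph'' must mean the full subgraph on a subset of vertices is exactly the sense in which the lemma is used in the proof of Lemma \ref{positive} (the blocks of $B$ are Gram matrices of the connected components of the induced subdiagram on $I$). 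Two small remarks. First, the final appeal to the Cartan--Killing classification is dispensable: for the application, what is actually needed is that each connected block of $B$ is the Gram matrix of an irreducible root system, so that Lemma \ref{cartan} applies, and your construction of $\Pi'$ already delivers this (irreducibility following from connectedness of the subdiagram). Second, an even more elementary variant avoids sub-root systems entirely: the Gram matrix of an induced subdiagram is a principal submatrix of a positive definite matrix, hence positive definite, and the edge labels are inherited, so the subdiagram is again an admissible diagram in the sense of the classification; but your argument is equally valid.
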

\begin{lem}\label{cartan}
	The inverse of the Gramm matrix $\big((\alpha,\beta) \big)_{\alpha,\beta\in\Delta}$ of an irreducible root system $\Delta$ is a matrix with positive entries.
\end{lem}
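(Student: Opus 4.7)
The plan is to treat $G=\big((\alpha,\beta)\big)_{\alpha,\beta\in\Delta}$ as a symmetric Stieltjes matrix and compute its inverse via a Neumann series. Three standard structural facts are at our disposal: $(\alpha,\alpha)>0$ on the diagonal, $(\alpha,\beta)\le 0$ off-diagonal (simple roots make obtuse angles), and $G$ is positive definite since $\Delta$ is a basis of $E$ and $(\cdot,\cdot)$ is an inner product.

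First I would fix a real number $s$ large enough that $s>(\alpha,\alpha)$ for every $\alpha\in\Delta$ and $2s$ exceeds the largest eigenvalue of $G$, and set $B:=sI-G$. The first condition makes the diagonal of $B$ strictly positive, while the off-diagonal entries $-(\alpha,\beta)$ of $B$ are automatically non-negative. The second condition places the spectrum of $G$ in $(0,2s)$, so the spectrum of $B$ lies in $(-s,s)$ and the spectral radius of $B/s$ is strictly less than $1$. The Neumann expansion
\[
G^{-1}\;=\;s^{-1}\sum_{k\ge 0}(B/s)^{k}
\]
therefore converges, and since each $B^{k}$ is a non-negative matrix, so is $G^{-1}$.

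The remaining task is to upgrade non-negativity to strict positivity, and here irreducibility enters: because the Dynkin diagram of $\Delta$ is connected, for every ordered pair $\alpha,\beta\in\Delta$ there is a chain $\alpha=\gamma_{0},\gamma_{1},\ldots,\gamma_{k}=\beta$ with $(\gamma_{i},\gamma_{i+1})\ne 0$, whence $B_{\gamma_{i}\gamma_{i+1}}>0$. The main point requiring care is that we must produce a \emph{single} exponent $m$ witnessing strict positivity for every pair simultaneously; fortunately, the strictly positive diagonal of $B$ behaves as self-loops that let us pad shorter paths to a common length, so a short counting argument yields such an $m$, bounded by the diameter of the Dynkin diagram. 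Since $(B^{m})_{\alpha\beta}>0$ for every pair and this term contributes with positive coefficient in the Neumann series, $(G^{-1})_{\alpha\beta}>0$ for all $\alpha,\beta\in\Delta$, which is the desired conclusion.
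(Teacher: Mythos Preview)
Your argument is correct. One small simplification: you do not in fact need a single exponent $m$ that works for all pairs simultaneously. Since every term in the Neumann series is entrywise non-negative, it suffices that for each fixed pair $(\alpha,\beta)$ there is \emph{some} $k$ (depending on the pair) with $(B^{k})_{\alpha\beta}>0$; that one positive summand already forces $(G^{-1})_{\alpha\beta}>0$. The padding argument is harmless but unnecessary.

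As for comparison with the paper: the paper does not supply a proof of this lemma at all. It simply records it as a known fact, noting that the usual statement is for the Cartan matrix $\big(2(\alpha,\beta)/(\beta,\beta)\big)_{\alpha,\beta}$ and then observing that the Gramm matrix differs from the Cartan matrix by multiplication with a diagonal matrix with positive entries, so positivity of the inverse transfers. Your Neumann-series argument is therefore genuinely more than what the paper provides: it is a self-contained proof of the underlying fact (essentially the standard $M$-matrix argument), rather than a citation. What the paper's remark buys is brevity and a pointer to the literature; what your approach buys is that the reader does not need to look anything up.
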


Lemma \ref{cartan} is usually formulated in terms of the Cartan matrix
of the root system that has entries $\frac{2(\alpha,\beta)}{(\beta,\beta)}$ with $\alpha,\beta\in\Delta$.
Since the Cartan matrix is the product of the Gramm matrix and 
the diagonal matrix with the positive entries 
$\frac{2}{(\beta,\beta)}$ with $\beta\in\Delta$, the claim is also true 
for the Gramm matrix as well.

\begin{proof}[Proof of Lemma \ref{positive}]
We index the simple roots $\Delta=\{\alpha_1,\cdots,\alpha_n \}$ so that $I=\{\alpha_1,\cdots,\alpha_m \}$ and so that 
$$
\{\alpha_1,\cdots,\alpha_{k_1}\},\{\alpha_{k_1+1},\cdots,\alpha_{k_1+k_2}\},\ldots , \{\alpha_{k_1+\cdots +k_{l-1}+1},\cdots,\alpha_{k_1+\cdots+k_l}=\alpha_m\}
$$ are nonadjacent connected subgraphs in the Dynkin diagram of $\Delta$.
We observe that
$$
{}^t(\alpha_1,\cdots,\alpha_n)=A \cdot {}^t(w_{\alpha_1},\cdots,w_{\alpha_n})
$$
where 
$A=\begin{pmatrix}( \alpha_i,\alpha_j)
\end{pmatrix}_{1\leq i,j\leq n},$
and 
$$
{}^t (\alpha_1,\cdots,\alpha_m,w_{\alpha_{m+1}},\cdots,w_{\alpha_n} )
= \begin{pmatrix}B & C\\
0& {\rm Id}_{n-m}
\end{pmatrix} \cdot {}^t(w_{\alpha_1},\cdots,w_{\alpha_n}),
$$
where $B=\begin{pmatrix}( \alpha_i,\alpha_j)
\end{pmatrix}_{1\leq i,j\leq m}$ and $C=\begin{pmatrix}( \alpha_i,\alpha_j)
\end{pmatrix}_{1\leq i\leq m, m+1\leq j\leq n}.$ 
We note that $B$ is invertible by Lemma \ref{dynkin}.
Therefore, 
$$
{}^t(\alpha_1,\cdots,\alpha_n)=D\cdot {}^t  (\alpha_1,\cdots,\alpha_m,w_{\alpha_{m+1}},\cdots,w_{\alpha_n} ),
$$
where
\[
D=A \begin{pmatrix}B & C\\
0& {\rm Id}_{n-m}
\end{pmatrix}^{-1}=A \begin{pmatrix}B^{-1} & -B^{-1}C\\
0& {\rm Id}_{n-m}
\end{pmatrix}.\]
By our assumption, the matrix $B$ is block-diagonal consisting of $l$ blocks. By Lemma \ref{dynkin}, each block is the Gramm matrix of a Dynkin diagram. 
In particular, it follows that $B$ is invertible and, by Lemma \ref{cartan},
the inverse of $B$ has non-negative entries. 
Since $( \alpha_i,\alpha_j)\leq 0$ for all $i\neq j$, the entries of the matrix $C$ are non-positive. Hence, the matrix $-B^{-1}C$  also has non-negative entries.

Now we can compute the coefficients in Lemma \ref{positive}. 
When $\alpha\in I$, $c_\beta=\delta_{\alpha \beta}$ and, in particular,
$c_\beta=0$ for $\alpha\ne \beta$. We suppose that
$\alpha\notin I$, so that  $\alpha=\alpha_p$ with some $p>m$. Then the coefficients in the expression of $\alpha_p$ with respect to $(\alpha_1,\cdots,\alpha_m,w_{\alpha_{m+1}},\cdots,w_{\alpha_n} )$ are given by
\[\begin{pmatrix}( \alpha_p,\alpha_j)
\end{pmatrix}_{1\leq j\leq n}\begin{pmatrix}B^{-1} & -B^{-1}C\\
0& {\rm Id}_{n-m}
\end{pmatrix}. \]
Here the matrices $B^{-1}$ and $-B^{-1}C$ have non-negative entries
and $(\alpha_p,\alpha_j )\le 0$ for all $j\ne p$.
Hence, performing matrix multiplication, we deduce that all 
the coefficients except the $p^{\rm th}$ one are non-positive.
\end{proof}

\bibliography{EquiDraft}
\bibliographystyle{alpha}

\end{document}